\providecommand{\U}[1]{\protect\rule{.1in}{.1in}}
\theoremstyle{plain}
\newtheorem{theorem}{Theorem}[section]
\newtheorem{cor}[theorem]{Corollary}
\newtheorem{prop}[theorem]{Proposition}
\theoremstyle{definition}
\newtheorem{rem}[theorem]{Remark}
\numberwithin{equation}{section}
\numberwithin{theorem}{section}
\let\pdfoutput=\undefined\fi
\begin{document}

\title{Nonnegative solutions of an indefinite sublinear Robin problem II: local and
global exactness results \thanks{2010 \textit{Mathematics Subject
Classification}. 35J15, 35J25, 35J61.} \thanks{\textit{Key words and phrases}.
elliptic problem, indefinite, sublinear, positive solution, Robin boundary
condition, exact multiplicity.} }
\author{Uriel Kaufmann\thanks{FaMAF-CIEM (CONICET), Universidad Nacional de
C\'{o}rdoba, Medina Allende s/n, Ciudad Universitaria, 5000 C\'{o}rdoba,
Argentina. \textit{E-mail address: }kaufmann@mate.uncor.edu} , Humberto Ramos
Quoirin \thanks{CIEM-FaMAF, Universidad Nacional de C\'{o}rdoba, (5000)
C\'{o}rdoba, Argentina. \textit{E-mail address: }humbertorq@gmail.com} ,
Kenichiro Umezu\thanks{Department of Mathematics, Faculty of Education,
Ibaraki University, Mito 310-8512, Japan. \textit{E-mail address:
}kenichiro.umezu.math@vc.ibaraki.ac.jp}
\and \noindent}
\maketitle

\begin{abstract}
We go further in the investigation of the Robin problem
\[%
\begin{cases}
-\Delta u=a(x)u^{q} & \mbox{in $\Omega$},\\
u\geq0 & \mbox{in $\Omega$},\\
\partial_{\nu}u=\alpha u & \mbox{on $\partial \Omega$},
\end{cases}
\leqno{(P_{\alpha})}
\]
on a bounded domain $\Omega\subset\mathbb{R}^{N}$, with $a$ sign-changing and
$0<q<1$. Assuming the existence of a positive solution for $\alpha=0$ (which
holds if $q$ is close enough to 1), we sharpen the description of the
nontrivial solution set of $(P_{\alpha})$ for $\alpha>0$. Moreover,
strengthening the assumptions on $a$ and $q$ we provide a global (i.e. for
\textit{every} $\alpha>0$) exactness result on the number of solutions of
$(P_{\alpha})$ . Our approach also applies to the problem
\[%
\begin{cases}
-\Delta u=\alpha u+a(x)u^{q} & \mbox{in $\Omega$},\\
u\geq0 & \mbox{in $\Omega$},\\
\partial_{\nu}u=0 & \mbox{on $\partial \Omega$}.
\end{cases}
\leqno{(S_\alpha)}
\]

\end{abstract}

\section{Introduction}

\label{sec:Intro}

In this article we proceed with the study of the indefinite problem
\[%
\begin{cases}
-\Delta u=a(x)u^{q} & \mbox{in $\Omega$},\\
u\geq0 & \mbox{in $\Omega$},\\
\partial_{\nu}u=\alpha u & \mbox{on $\partial \Omega$}.
\end{cases}
\leqno{(P_\alpha)}
\]
Here $\Omega\subset\mathbb{R}^{N}$ ($N\geq1$) is a smooth bounded domain,
$\partial_{\nu}:=\frac{\partial}{\partial\nu}$, $\nu$ is the unit outward
normal to $\partial\Omega$, $a\in C^{\theta}(\overline{\Omega})$ ($0<\theta
<1$) changes sign, $0<q<1$ and $\alpha\geq0$.

In \cite{KRQU2019} we have studied the structure of the nontrivial nonnegative
solutions set of this problem, considered with respect to $\alpha\in
\lbrack-\infty,\infty)$. Let us recall that the case $\alpha=-\infty$ is
understood as the Dirichlet boundary condition $u=0$ on $\partial\Omega$. We
shall go deeper in this analysis, giving a more precise description of the
solutions set of $(P_{\alpha})$ in the region $\alpha>0$ and weakening some of
the assumptions in \cite{KRQU2019}.

By a \textit{solution} of $(P_{\alpha})$ we mean a classical nonnegative
solution $u\in C^{2+r}(\overline{\Omega})$, for some $r\in(0,1)$. We say that
$u$ is \textit{nontrivial} if $u\not \equiv 0$. In particular, we are
interested in solutions lying in
\[
\mathcal{P}^{\circ}:=\{u\in C(\overline{\Omega}):u>0\ \mbox{on}\ \overline
{\Omega}\}.
\]
Note that
since $q\in\left(  0,1\right)  $ and $a$ changes sign in $\Omega$, this
problem may have solutions that do not belong to $\mathcal{P}^{\circ}$, see
e.g. \cite[Remark 3.7]{KRQU2019}.

The following condition on $a$, which is known to be necessary for the
existence of solutions in $\mathcal{P}^{\circ}$ when $\alpha\geq0$, shall be
assumed throughout this paper:
\[
\int_{\Omega}a<0.\leqno{({\bf A.0})}
\]
In addition, we shall assume a technical condition on the set
\[
\Omega_{+}^{a}:=\{x\in\Omega:a(x)>0\},
\]
namely:
\[
\Omega_{+}^{a}\text{ has \textit{finitely }many connected components, which
are all smooth.}\leqno{({\bf A.1})}
\]





Our results in \cite{KRQU2019} for $\alpha>0$ have been established when
$q\in\mathcal{A}_{\mathcal{N}}:=\mathcal{A}_{0}$, where
\[
\mathcal{A}_{\alpha}=\mathcal{A}_{\alpha}(a):=\{q\in
(0,1):\mbox{any nontrivial solution of $(P_\alpha)$ lies in $\mathcal{P}^\circ$}\}.
\]
We recall that, under $(A.0)$ and $(A.1)$, we have $\mathcal{A}_{\mathcal{N}%
}=(q_{\mathcal{N}},1)$ for some $q_{\mathcal{N}}=q_{\mathcal{N}}(a) \in
\lbrack0,1)$. Moreover, if $q\in\mathcal{A}_{\mathcal{N}}$ then $(P_{0})$ has
a \textit{unique} nontrivial solution $u_{\mathcal{N}}$, which satisfies
$u_{\mathcal{N}}\in\mathcal{P}^{\circ}$ \cite[Theorem 1.9]{KRQU16}, and every
nontrivial solution of $(P_{\alpha})$ belongs to $\mathcal{P}^{\circ}$ if
$0<\alpha<\alpha_{+}$, for some $\alpha_{+}>0$ \cite[Proposition
2.3]{KRQU2019}.

In the present paper, most of our results shall be proved under the weaker
assumption $q\in\mathcal{I}_{\mathcal{N}}$, where
\[
\mathcal{I}_{\mathcal{N}}=\mathcal{I}_{\mathcal{N}}(a):=\{q\in
(0,1):\mbox{$(P_0)$ has a solution in
$\mathcal{P}^\circ$}\}.
\]
Under $(A.0)$, it is known that $(P_{0})$ has at least one nontrivial solution
for any $q\in(0,1)$, see e.g. \cite[Theorem 2.1]{BPT2}, so $\mathcal{A}%
_{\mathcal{N}}\subseteq\mathcal{I}_{\mathcal{N}}$. In general, we may have
$\mathcal{A}_{\mathcal{N}}\not =\mathcal{I}_{\mathcal{N}}$; however when
$\Omega_{a}^{+}$ is connected and smooth, it holds that $\mathcal{A}%
_{\mathcal{N}}=\mathcal{I}_{\mathcal{N}}$, see \cite[Theorem 1.4]{KRQUnodea}.
Also, by \cite[Theorem 3.1]{BPT2} we know that $(P_{0})$ has at most one
solution which is positive in $\Omega_{+}^{a}$. Thus, whenever $q\in
\mathcal{I}_{\mathcal{N}}$ we denote by $u_{\mathcal{N}}(a)$ (or simply
$u_{\mathcal{N}}$ if no confusion arises) the unique solution in
$\mathcal{P}^{\circ}$ of $(P_{0})$ (or $(P_{0,a})$ if we need to stress the
dependence on $a$) .

To the best of our knowledge, prior to \cite{KRQU2019} the only work dealing
with $(P_{\alpha})$ for $\alpha>0$ is \cite{CT14}, where Chabrowski and
Tintarev established a \textit{local} multiplicity result for the related
problem
\[%
\begin{cases}
-\Delta w=\alpha a(x)w^{q} & \mbox{in}\ \Omega,\\
w\geq0 & \mbox{in}\ \Omega,\\
\partial_{\nu}w=\alpha w & \mbox{on}\ \partial\Omega,
\end{cases}
\leqno{(R_\alpha)}
\]
with $\alpha>0$ small (note that $(R_{\alpha})$ is equivalent to $(P_{\alpha
})$, after the change of variables $w=\alpha^{\frac{1}{1-q}}u$). More
precisely, by variational methods it was proved in \cite[Propositions 7.4 and
7.7]{CT14} that under $(A.0)$, $(R_{\alpha})$ has at least two nontrivial
solutions $w_{1,\alpha},w_{2,\alpha}$ such that $w_{1,\alpha}<w_{2,\alpha}$ on
$\overline{\Omega}$ for $\alpha>0$ small enough. Moreover, if
\begin{equation}
c_{a}:=\left(  \frac{-\int_{\Omega}a}{|\partial\Omega|}\right)  ^{\frac
{1}{1-q}}, \label{def:ca}%
\end{equation}
the following asymptotic profiles of $w_{1,\alpha},w_{2,\alpha}$ hold as
$\alpha\rightarrow0^{+}$:
\begin{equation}
w_{2,\alpha}\rightarrow c_{a}\quad\mbox{and}\quad w_{1,\alpha}\rightarrow
0\ \mbox{in}\ H^{1}(\Omega)\quad\mbox{as}\quad\alpha\rightarrow0^{+},
\label{w1w2}%
\end{equation}
and every sequence $\alpha_{n}\rightarrow0$ has a subsequence (still denoted
by the same notation) satisfying
\begin{equation}
\alpha_{n}^{-\frac{1}{1-q}}w_{1,\alpha_{n}}\rightarrow u_{0}%
\mbox{ in $H^{1}(\Omega)$}, \label{w1ap}%
\end{equation}
where $u_{0}$ is a nontrivial solution of $(P_{0})$.

In particular, we see that under $(A.0)$ the problem $(P_{\alpha})$ has, for
any $q\in(0,1)$ and $\alpha>0$ small enough, a solution in $\mathcal{P}%
^{\circ}$, namely, $\alpha^{-\frac{1}{1-q}}w_{2,\alpha}$. Our first result
asserts that $(A.0)$ is also \textit{necessary} for the existence of solutions
of $(P_{\alpha})$ that are positive in $\Omega_{+}^{a}$ (for any $\alpha\geq0$
and $0<q<1$) and gives a sufficient condition for such solutions to be in
$\mathcal{P}^{\circ}$:

\begin{theorem}
\label{tp1}If $(P_{\alpha})$ has a solution $u$ such that $u>0$ in $\Omega
_{a}^{+}$ for some $\alpha\geq0$ and $q\in(0,1)$, then $(A.0)$ holds. If we
assume in addition $(A.1)$ and $q\in\mathcal{I}_{\mathcal{N}}$, then every
such solution belongs to $\mathcal{P}^{\circ}$.
\end{theorem}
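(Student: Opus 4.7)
For the first assertion, I would test the equation against $(u+\epsilon)^{-q}$ for $\epsilon>0$. Since $u \in C^{2+r}(\overline{\Omega})$ is nonnegative, this is a smooth, strictly positive function on $\overline{\Omega}$ and hence a legitimate test function. Multiplying $-\Delta u = au^q$ by $(u+\epsilon)^{-q}$ and integrating by parts, using the Robin boundary condition $\partial_\nu u = \alpha u$, yields
\[
-q\int_\Omega (u+\epsilon)^{-q-1}|\nabla u|^2 \;-\; \alpha\int_{\partial\Omega} u\,(u+\epsilon)^{-q} \;=\; \int_\Omega a\,u^q(u+\epsilon)^{-q}.
\]
The left-hand side is nonpositive, so the right-hand side is as well. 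As $\epsilon\to 0^+$, the integrand on the right is dominated by $|a|$ and converges pointwise to $a\,\chi_{\{u>0\}}$, so dominated convergence gives $\int_{\{u>0\}} a \leq 0$. Because $\Omega_+^a \subseteq \{u>0\}$ by hypothesis and $a\leq 0$ on $\{u=0\}\subseteq \Omega\setminus\Omega_+^a$, the total integral $\int_\Omega a$ is $\leq 0$. For strict negativity, equality would force $|\nabla u|\equiv 0$ on $\{u>0\}$ (the gradient term passes to its limit by monotone convergence, and $\nabla u=0$ on interior zeros because these are minima), hence $u$ would be constant on each component of $\{u>0\}$; then the PDE would force $a\equiv 0$ on $\Omega_+^a\subseteq\{u>0\}$, contradicting the sign-changing of $a$.

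For the second assertion, the first step is to upgrade positivity of $u$ from $\Omega_+^a$ to its closure inside $\Omega$ via Hopf's boundary point lemma. On each connected component $D$ of $\Omega_+^a$, which is smooth by $(A.1)$, the solution $u$ is superharmonic since $-\Delta u = au^q \geq 0$. If $u(y_0)=0$ for some $y_0\in\partial D\cap\Omega$, then $y_0$ is an interior minimum of $u$ on $\Omega$, so $\nabla u(y_0)=0$; but Hopf's lemma applied in $D$ (the interior sphere condition is available at smooth boundary points) would force the outward normal derivative $\partial_\nu u(y_0)<0$, a contradiction. Hence $u>0$ on $\overline{\Omega_+^a}\cap\overline{\Omega}$.

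It then remains to preclude the vanishing of $u$ on $\overline{\Omega}\setminus\overline{\Omega_+^a}$. My plan is to compare $u$ with $u_{\mathcal{N}}$ by a sub-/supersolution argument: since $\alpha\geq 0$, the given $u$ is automatically a supersolution of $(P_0)$ (the PDE is satisfied with equality, and $\partial_\nu u = \alpha u \geq 0$). Combined with a positive subsolution of $(P_0)$ lying below $u$ (whose existence uses the strict positivity of $u$ on the neighborhood of $\overline{\Omega_+^a}$ obtained in the previous step), a monotone iteration scheme for sublinear indefinite problems produces a solution $v$ of $(P_0)$ with $0\leq v\leq u$ and $v>0$ on $\Omega_+^a$. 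The uniqueness statement \cite[Theorem 3.1]{BPT2}, applicable since $q\in\mathcal{I}_{\mathcal{N}}$, then forces $v=u_{\mathcal{N}}$, whence $u\geq u_{\mathcal{N}}>0$ on $\overline{\Omega}$ and $u\in\mathcal{P}^{\circ}$.

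The main obstacle is the construction of a positive subsolution of $(P_0)$ below $u$. The naive candidate $t\,u_{\mathcal{N}}$ for small $t>0$ fails, because $-\Delta(tu_{\mathcal{N}}) = t^{1-q} a (tu_{\mathcal{N}})^q$ is neither a sub- nor a supersolution of $(P_0)$ when $a$ changes sign and $q<1$. The construction must therefore exploit locally the strict positivity of $u$ on a neighborhood of $\overline{\Omega_+^a}$ together with the finite-component structure granted by $(A.1)$, and must ensure that the monotone iterates do not develop a dead core on $\Omega_+^a$, so that the $\mathcal{I}_{\mathcal{N}}$-uniqueness of \cite{BPT2} can be invoked at the limit.
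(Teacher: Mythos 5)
Your first part is essentially the paper's own argument (Proposition \ref{p1}): test against $(u+\varepsilon)^{-q}$, let $\varepsilon\to 0^{+}$, and use that $a\leq 0$ where $u$ vanishes; your degenerate-case analysis for strictness is a reasonable rendering of the argument the paper borrows from Alama. Your plan for the second part is also the right one and matches Proposition \ref{lbuN}(i): view $u$ as a supersolution of $(P_{0})$, trap a solution $v$ of $(P_{0})$ between a positive subsolution and $u$, and identify $v$ with $u_{\mathcal{N}}$ via the uniqueness theorem of Bandle--Pozio--Tesei, so that $u\geq u_{\mathcal{N}}\in\mathcal{P}^{\circ}$.

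However, you explicitly leave the decisive step --- the construction of a subsolution of $(P_{0})$ lying below $u$ and forcing positivity of the trapped solution on $\Omega_{+}^{a}$ --- as an unresolved ``obstacle,'' and this is precisely where the content of the proof lies; without it the argument is incomplete. The paper's construction is purely local and does not require your preliminary Hopf-lemma step (positivity of $u$ on $\overline{\Omega_{+}^{a}}$ is not needed, only on compact subsets of each component). Write $\Omega_{+}^{a}=\cup_{k=1}^{n}\Omega_{k}$ using $(A.1)$, pick a ball $B_{k}\Subset\Omega_{k}$, and let $\phi_{k}>0$ be a first eigenfunction of the Dirichlet problem $-\Delta\phi=\lambda a(x)\phi$ in $B_{k}$, $\phi=0$ on $\partial B_{k}$ (well posed since $a>0$ on $\overline{B_{k}}$), extended by $0$ outside $B_{k}$. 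Since $q<1$, for $\varepsilon_{k}>0$ small one has $\varepsilon_{k}\lambda_{1}a\phi_{k}\leq a(\varepsilon_{k}\phi_{k})^{q}$ in $B_{k}$, so $\varepsilon_{k}\phi_{k}$ is a (weak) subsolution of $(P_{0})$, and it lies below $u$ because $u>0$ on the compact set $\overline{B_{k}}\subset\Omega_{k}$. Then $\hat\phi:=\max_{k}\varepsilon_{k}\phi_{k}$ is a weak subsolution with $\hat\phi\leq u$, and the sub-/supersolution method yields a solution $v$ of $(P_{0})$ with $\hat\phi\leq v\leq u$; in particular $v>0$ in each $B_{k}$. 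Your worry about a ``dead core'' is dispatched by the strong maximum principle applied in each $\Omega_{k}$, where $-\Delta v=av^{q}\geq 0$ makes $v$ superharmonic: positivity on $B_{k}$ propagates to all of $\Omega_{k}$, hence $v>0$ on $\Omega_{+}^{a}$ and the uniqueness theorem gives $v=u_{\mathcal{N}}$. With this insertion your proof closes; as written, it does not.
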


Assuming additionally $(A.1)$ and $q\in\mathcal{A}_{\mathcal{N}}$, we proved
in
\cite[Corollary 3.13]{KRQU2019} that $w_{1,\alpha},w_{2,\alpha}$ are the only
nontrivial solutions of $(R_{\alpha})$ for $\alpha>0$ small, and in
\cite[Theorem 4.4]{KRQU2019} that $(R_{\alpha})$ has a maximal closed,
connected subset of nontrivial solutions containing $\{(\alpha,w_{1,\alpha
}),(\alpha,w_{2,\alpha})\}$ under some further assumptions on $a$. We remark
that the solutions set $\{(\alpha,u)\}$ of $(P_{\alpha})$ corresponds to
$\{(\alpha,w)\}$ of $(R_{\alpha})$ by
the change of variables $u=\alpha^{-\frac{1}{1-q}}w$ for $\alpha>0$.

Let us set
\begin{equation}
\alpha_{s}=\alpha_{s}(a,q):=\sup\{\alpha\geq
0:\mbox{$(P_\alpha)$ has a solution in $\mathcal{P}^\circ$}\}.
\label{def:alphs}%
\end{equation}
Proceeding as in \cite[Proposition 3.4]{KRQU2019} we can prove that
$\alpha_{s}<\infty$ if $q\in\mathcal{I}_{\mathcal{N}}$. We shall analyze the
solution set of $(P_{\alpha})$ in neighboorhoods of $0$ and $\alpha_{s}$. This
investigation provides us with a global description of the solutions set
structure of $(P_{\alpha})$ for $\alpha\geq0$. A more precise version of
Theorem \ref{mthm} will be provided in Section \ref{sec:pr1-2}, see Theorem
\ref{mthm2}.



\begin{theorem}
\label{mthm}

Assume $(A.0),$ $(A.1)$, and $q\in\mathcal{I}_{\mathcal{N}}$. Then:

\begin{enumerate}
\item $(P_{\alpha})$
has a solution curve $\mathcal{C}_{1}=\left\{  (\alpha,u_{1,\alpha}%
);0\leq\alpha\leq\alpha_{s}\right\}  $ such that $\alpha\mapsto u_{1,\alpha
}\in\mathcal{P}^{\circ}$ is continuous and increasing on $[0,\alpha_{s}]$ and
$C^{\infty}$ in $[0,\alpha_{s})$, with $u_{1,0}=u_{\mathcal{N}}$, and
$u_{1,\alpha_{s}}$ is the unique solution of $(P_{\alpha_{s}})$ in
$\mathcal{P}^{\circ}$. Moreover, $\mathcal{C}_{1}$ is extended to a
$C^{\infty}$ curve, say $\mathcal{C}_{1}^{\prime}$ ($\supset\mathcal{C}_{1}$),
bending \textrm{to the left} in a neighborhood of $(\alpha_{s},u_{1,\alpha
_{s}})$.

\item $(P_{\alpha})$ has a solution curve $\mathcal{C}_{2}=\left\{
(\alpha,u_{2,\alpha});0<\alpha\leq\overline{\alpha}\right\}  $ for some
$\overline{\alpha}\in(0,\alpha_{s}]$, such that $\alpha\mapsto u_{2,\alpha}%
\in\mathcal{P}^{\circ}$ is continuous and decreasing on
$(0,\overline{\alpha}]$ and $C^{\infty}$ in $(0,\overline{\alpha})$, with
$\min_{\overline{\Omega}}u_{2,\alpha}\rightarrow\infty$ as $\alpha
\rightarrow0^{+}$. Moreover:

\begin{enumerate}
\item for any interior point $(\alpha,u)\in\mathcal{C}_{1}^{\prime}%
\cup\mathcal{C}_{2}$, the solutions set of $(P_{\alpha})$ in a neighborhood of
$(\alpha,u)$ is given exactly by $\mathcal{C}_{1}^{\prime}\cup\mathcal{C}_{2}$ ;

\item for every $\alpha\in(0,\overline{\alpha})$ the solutions $u_{1,\alpha
},u_{2,\alpha}$ are strictly ordered
by $u_{2,\alpha}-u_{1,\alpha}\in\mathcal{P}^{\circ}$, and these ones are the
only solutions of $(P_{\alpha})$ in $\mathcal{P}^{\circ}$ for $\alpha>0$ small.
\end{enumerate}

\item Assume additionally that $0\not \equiv a\geq0$ in some smooth domain
$D\subset\Omega$ such that $\left\vert \partial D\cap\partial\Omega\right\vert
>0$. Then $\mathcal{C}_{1}^{\prime}$ is connected to $\mathcal{C}_{2}$ by a
component (i.e., a maximal closed, connected subset) $\mathcal{C}_{\ast}$ of
solutions of $(P_{\alpha})$ in
$[0,\alpha_{s}]\times\mathcal{P}^{\circ}$, see Figure \ref{fig19_0422compo}(i).
\end{enumerate}
\end{theorem}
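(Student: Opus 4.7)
The plan is to construct the two branches by local implicit-function-theorem (IFT) arguments, to treat the turning point at $\alpha_{s}$ via a Crandall--Rabinowitz fold analysis, and finally to connect the two curves by a global component argument based on Leray--Schauder degree. For the lower branch $\mathcal{C}_{1}$, set $F(\alpha,u):=-\Delta u-a(x)u^{q}$ together with the boundary operator $\partial_{\nu}u-\alpha u$, viewed as smooth maps between appropriate $C^{2+r}$ spaces. At $(0,u_{\mathcal{N}})$ the linearization $L_{0}=-\Delta-qau_{\mathcal{N}}^{q-1}$ with Neumann data has positive principal eigenvalue, as follows from the sublinearity of $s\mapsto s^{q}$ combined with uniqueness of $u_{\mathcal{N}}\in\mathcal{P}^{\circ}$ via a standard Picone-type identity. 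IFT then gives a smooth local curve, which I continue as long as $L_{\alpha}$ remains invertible. Monotonicity in $\alpha$ follows because $\partial_{\alpha}F$ contributes a positive boundary source whose $L_{\alpha}^{-1}$-image is strictly positive by the maximum principle, and the a priori bound argument of \cite[Proposition 3.4]{KRQU2019} yields $\alpha_{s}<\infty$.

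At $\alpha=\alpha_{s}$ the linearization $L_{\alpha_{s}}$ must become degenerate, otherwise the branch could be continued. Elliptic compactness along $\mathcal{C}_{1}$ combined with the sublinear uniqueness principle (any two strictly positive solutions at a fixed $\alpha$ coincide, by a Picone computation) produces a unique limit $u_{1,\alpha_{s}}\in\mathcal{P}^{\circ}$, which is the only solution of $(P_{\alpha_{s}})$ in $\mathcal{P}^{\circ}$. The kernel of $L_{\alpha_{s}}$ is one-dimensional, spanned by a strictly positive eigenfunction $\varphi$, and the Fredholm alternative together with the fact that $\int_{\partial\Omega}u_{1,\alpha_{s}}\varphi>0$ verifies the Crandall--Rabinowitz transversality condition. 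This produces the smooth extension $\mathcal{C}_{1}^{\prime}$ through the fold, and the sign of $\langle F_{uu}[\varphi,\varphi],\varphi\rangle$, negative because of the sublinearity of $s\mapsto s^{q}$, forces the leftward bending.

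The upper branch $\mathcal{C}_{2}$ is extracted from the Chabrowski--Tintarev solutions: via the scaling $u=\alpha^{-1/(1-q)}w$, the convergence $w_{2,\alpha}\rightarrow c_{a}$ in $H^{1}$ upgrades to $C^{2+r}(\overline{\Omega})$ by elliptic bootstrapping, so $u_{2,\alpha}\in\mathcal{P}^{\circ}$ with $\min_{\overline{\Omega}}u_{2,\alpha}\rightarrow\infty$ as $\alpha\rightarrow 0^{+}$. For such large $u$, the linearization $-\Delta-qau^{q-1}$ is a small perturbation of $-\Delta$, whose principal eigenvalue under Robin data $\partial_{\nu}-\alpha$ is positive for small $\alpha>0$, so IFT extends the branch smoothly and monotonically to a maximal interval $(0,\overline{\alpha})$ with $\overline{\alpha}\leq\alpha_{s}$. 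Statement (2a) is then immediate from IFT at every non-degenerate interior point of $\mathcal{C}_{1}^{\prime}\cup\mathcal{C}_{2}$. For (2b), the ordering $u_{2,\alpha}-u_{1,\alpha}\in\mathcal{P}^{\circ}$ holds for small $\alpha$ since $u_{1,\alpha}\rightarrow u_{\mathcal{N}}$ while $u_{2,\alpha}\rightarrow\infty$ uniformly, and propagates to all of $(0,\overline{\alpha})$ by a sublinear sweeping argument; the absence of further $\mathcal{P}^{\circ}$-solutions for small $\alpha$ is an adaptation of \cite[Corollary 3.13]{KRQU2019}, where Theorem \ref{tp1} allows the replacement of $\mathcal{A}_{\mathcal{N}}$ by $\mathcal{I}_{\mathcal{N}}$.

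For (3), I would work with Leray--Schauder degree on $\mathcal{P}^{\circ}\cap B_{R}$ for $R$ large and apply a Whyburn-type connectivity lemma. The additional hypothesis $0\not\equiv a\geq 0$ in a smooth subdomain $D$ meeting $\partial\Omega$ on a set of positive surface measure is used precisely to obtain a priori bounds on $\mathcal{P}^{\circ}$-solutions for $\alpha$ in any compact subinterval of $(0,\alpha_{s})$, ruling out boundary blow-up via the Robin condition on $\partial D\cap\partial\Omega$. Combining such bounds with the local structure given by Steps 1--3 then yields a maximal component $\mathcal{C}_{\ast}\subset[0,\alpha_{s}]\times\mathcal{P}^{\circ}$ joining $\mathcal{C}_{1}^{\prime}$ to $\mathcal{C}_{2}$. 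The hardest part is concentrated at the fold: simultaneously proving uniqueness of $u_{1,\alpha_{s}}\in\mathcal{P}^{\circ}$, verifying Crandall--Rabinowitz transversality, and extracting the correct sign of the quadratic term to guarantee the leftward turn; the remaining steps largely adapt methods from \cite{KRQU2019}, sharpened to the weaker assumption $q\in\mathcal{I}_{\mathcal{N}}$ via Theorem \ref{tp1}.
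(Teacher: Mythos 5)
Your outline reproduces the correct skeleton (IFT continuation of the minimal branch, a Crandall--Rabinowitz fold at $\alpha_{s}$, rescaling of the Chabrowski--Tintarev solutions for $\mathcal{C}_{2}$, and a global connectivity argument for (iii)), but it rests on a lemma that is false and that the paper goes to considerable lengths to avoid. You invoke a ``sublinear uniqueness principle (any two strictly positive solutions at a fixed $\alpha$ coincide, by a Picone computation)'' to identify the limit $u_{1,\alpha_{s}}$ and to get uniqueness in $\mathcal{P}^{\circ}$ at $\alpha_{s}$. If that principle held for $(P_{\alpha})$ it would contradict the very statement you are proving, namely that $(P_{\alpha})$ has \emph{two} solutions in $\mathcal{P}^{\circ}$ for $\alpha\in(0,\overline{\alpha})$. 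The Picone/Brezis--Oswald argument breaks down here precisely because of the Robin term: the boundary contribution $\alpha\int_{\partial\Omega}(\cdot)$ enters with the wrong sign, so concavity of $s\mapsto a(x)s^{q}$ in the interior does not force uniqueness (it does only at $\alpha=0$, which is \cite[Theorem 3.1]{BPT2}). The paper's uniqueness of $u_{1,\alpha_{s}}$ is instead the hardest step of the proof: one first shows (Proposition \ref{unstable}) that $u_{1,\alpha}$ is the \emph{only} weakly stable solution, by continuing any other putative stable solution backwards in $\alpha$ via the IFT until it must coincide with $u_{2,\alpha}$, which is proved unstable for small $\alpha$ (Proposition \ref{monoton}(ii)); then (Proposition \ref{ast:es}) any second solution $v_{0}\in\mathcal{P}^{\circ}$ of $(P_{\alpha_{s}})$ would, via Amann's sub--supersolution scheme applied either to the ordered pair $(u(t),v_{0})$ or to $(u_{\mathcal{N}},\min(v_{0},u_{\ast}))$, produce a second weakly stable solution --- a contradiction. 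This stability mechanism is entirely absent from your proposal, and without it both the uniqueness at $\alpha_{s}$ and the identity $\alpha_{\ast}=\alpha_{s}$ (i.e.\ that the fold occurs exactly at the supremum of the existence range) are unproved.

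Two further points are not mere details. First, at the fold you assert that $\langle F_{uu}[\varphi,\varphi],\varphi\rangle<0$ ``because of the sublinearity of $s\mapsto s^{q}$''; but this quantity is $q(q-1)\int_{\Omega}au_{0}^{q-2}\varphi^{3}$ and, since $a$ changes sign, its sign is not determined by sublinearity. The paper needs the identity
\begin{equation*}
(1-q)\int_{\Omega}au_{0}^{q-2}\phi_{1}^{3}
=-2\int_{\Omega}\frac{\phi_{1}}{u_{0}^{3}}\left\vert u_{0}\nabla\phi_{1}-\phi_{1}\nabla u_{0}\right\vert ^{2}<0,
\end{equation*}
obtained by testing $(P_{\alpha_{0}})$ and the linearized problem with $\phi_{1}^{3}/u_{0}^{2}$ and $\phi_{1}^{2}/u_{0}$; this is the idea that actually forces the leftward bending. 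Second, your claim that the principal eigenvalue of $-\Delta$ under $\partial_{\nu}\phi=\alpha\phi$ is \emph{positive} for small $\alpha>0$ is wrong (testing with constants gives $\mu_{1}(\alpha)\leq-\alpha|\partial\Omega|/|\Omega|<0$); consistently, $u_{2,\alpha}$ is \emph{unstable} for small $\alpha$, and this instability is exactly what the paper's uniqueness argument feeds on. The IFT continuation of $\mathcal{C}_{2}$ survives (one only needs $0\notin\sigma$), but the monotonicity of $\alpha\mapsto u_{2,\alpha}$ and the exclusion of further solutions near $\alpha=0$ both require the quantitative asymptotics $\alpha^{\frac{1}{1-q}}u_{2,\alpha}\rightarrow c_{a}$ and $\alpha^{\frac{2-q}{1-q}}\frac{du_{2,\alpha}}{d\alpha}\rightarrow-\frac{c_{a}}{1-q}$ from \cite[Proposition 3.11]{KRQU2019}, which you do not supply. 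Finally, in (iii) the hypothesis on $D$ is not needed for a priori bounds on compact subintervals of $(0,\alpha_{s})$ (those hold by \cite[Proposition 3.2]{KRQU2019} regardless); it is needed to control the subcontinuum of $(R_{\alpha})$ near $\alpha=0$ so that, after the change of variables, the component meets $\{(0,u)\}$ only at $(0,u_{\mathcal{N}})$ and $(0,\infty)$.
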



\begin{figure}[tbh]
\centerline{
\includegraphics[scale=0.18]{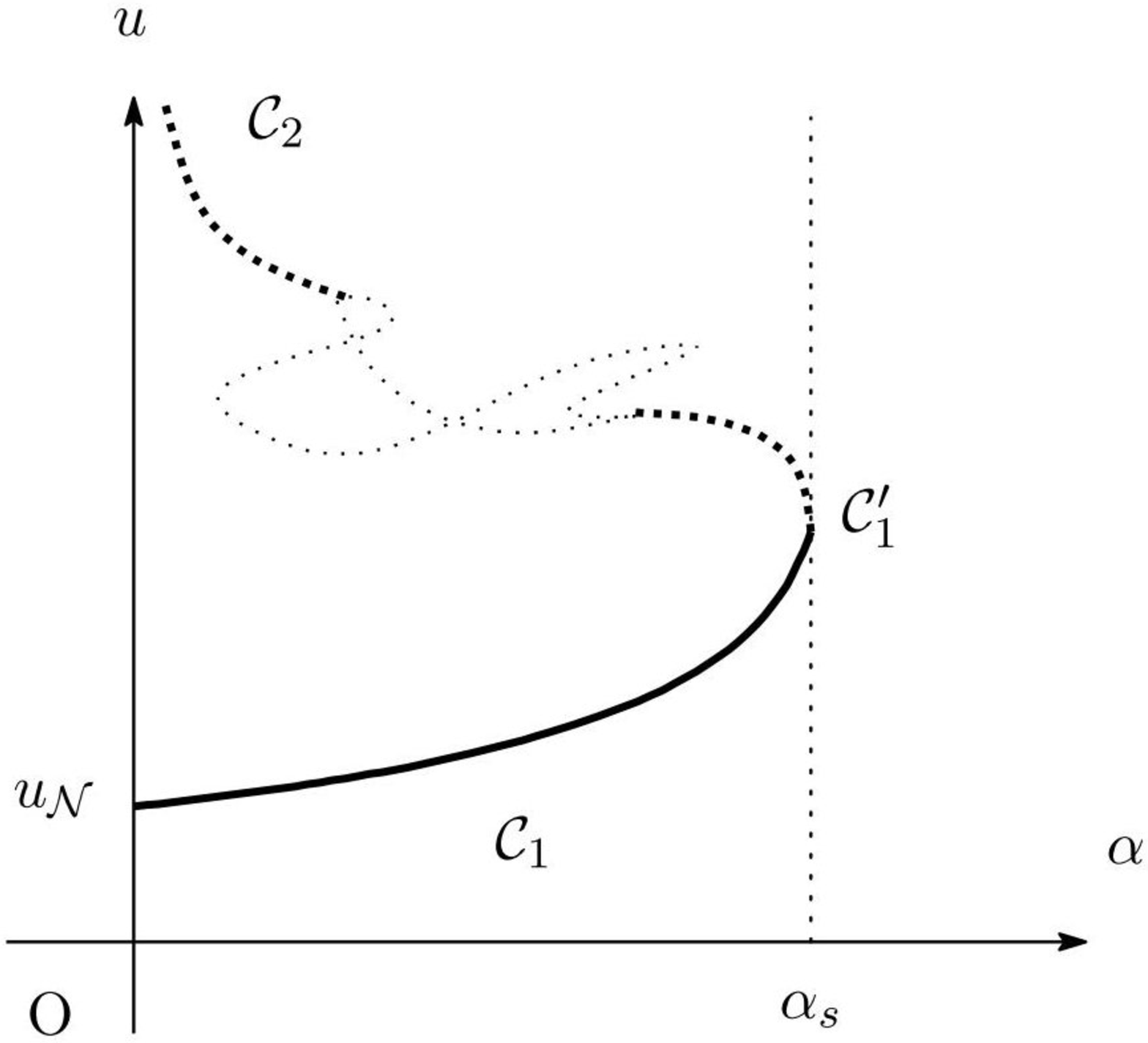}
\hskip0.35cm
\includegraphics[scale=0.18]{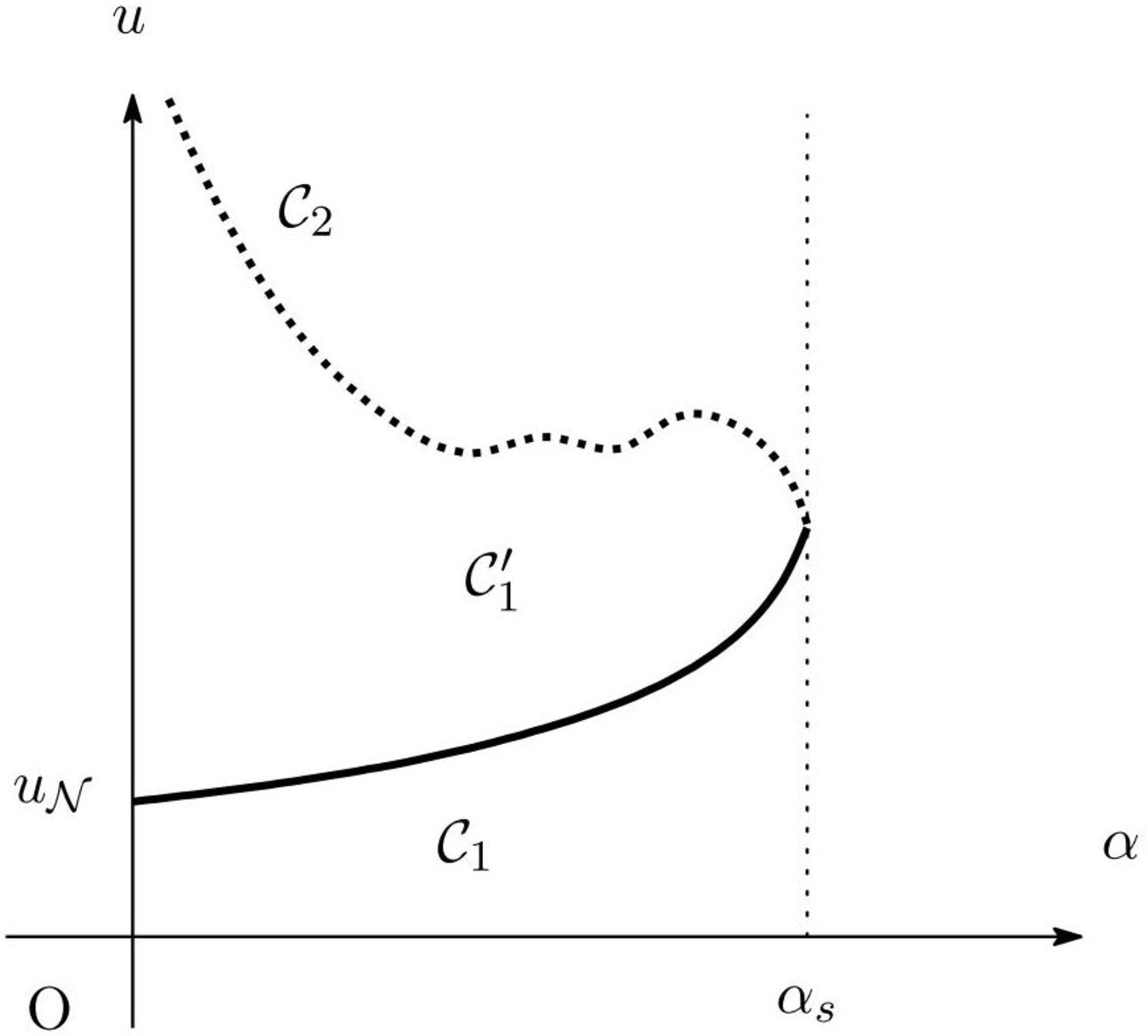}
} \centerline{(i) \hskip6.0cm (ii) }\caption{ (i) The component $\mathcal{C}%
_{\ast}$. (ii) The exact solution curve $\mathcal{C}_{1}^{\prime}$. The full
curve represents asymptotically stable solutions in $\mathcal{P}^{\circ}$,
whereas the dotted curve represents unstable solutions in $\mathcal{P}^{\circ
}$. }%
\label{fig19_0422compo}%
\end{figure}


Under different conditions to those in Theorem \ref{mthm}(iii) we shall
establish a \textit{global exactness} result. Let us denote by $\alpha
_{2}=\alpha_{2}(\Omega)$ the second (and first nontrivial) eigenvalue of the
Steklov problem%
\begin{equation}%
\begin{cases}
\Delta\phi=0 & \mbox{ in }\Omega,\\
\partial_{\nu}\phi=\alpha\phi & \mbox{ on }\partial\Omega.
\end{cases}
\label{st}%
\end{equation}
When $\Omega$ is a ball of radius $R$, we know that $\alpha_{2}(\Omega
)=\frac{1}{R}$. Also, if $\Omega$ is star shaped, several lower bounds for
$\alpha_{2}$ are known, see e.g. \cite[Section 1]{Verma} and references therein.

\begin{theorem}
\label{ball}

Assume $(A.0),$ $(A.1)$, $q\in\mathcal{I}_{\mathcal{N}}$ and
\begin{equation}
\frac{-\int_{\Omega}a}{\int_{\partial\Omega}u_{\mathcal{N}}^{1-q}}\leq
\alpha_{2}. \label{hip}%
\end{equation}
Then the following two assertions hold:

\begin{enumerate}
\item The solutions set in $\mathcal{P}^{\circ}$ of $(P_{\alpha})$ with
$\alpha\geq0$ consists of $\mathcal{C}_{1}^{\prime}$, which contains
$\mathcal{C}_{2}$ in its upper part. In particular, $\left(  P_{\alpha
}\right)  $ has exactly two solutions in $\mathcal{P}^{\circ}$ for all
$\alpha\in(0,\alpha_{s})$, see Figure \ref{fig19_0422compo}(ii).

\item Assume in addition that $\partial\Omega\subseteq\partial\Omega_{+}^{a}$
and $q\in\mathcal{A}_{\mathcal{N}}$. Then the nontrivial solutions set of
$\left(  P_{\alpha}\right)  $ with $\alpha>0$ is given exactly by
$\mathcal{C}_{1}^{\prime}$,

\end{enumerate}
\end{theorem}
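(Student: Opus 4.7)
My plan is to combine the structural result of Theorem \ref{mthm} with a uniqueness argument driven by the Steklov condition \eqref{hip}. For part (i), the first step is to merge $\mathcal{C}_{2}$ with the upper branch of $\mathcal{C}_{1}^{\prime}$. By Theorem \ref{mthm}(i), $\mathcal{C}_{1}^{\prime}$ bends to the left at $(\alpha_{s},u_{1,\alpha_{s}})$, so it admits an upper branch defined for $\alpha$ slightly below $\alpha_{s}$. I would continue this branch via the implicit function theorem applied to $F(\alpha,u)=-\Delta u-au^{q}$ with the Robin boundary condition, as long as the linearized operator $L_{u}\phi=-\Delta\phi-qau^{q-1}\phi$ (with $\partial_{\nu}\phi=\alpha\phi$) is invertible. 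Condition \eqref{hip} is what should preclude $0$ from lying in the spectrum of $L_{u}$ on this branch away from $\alpha_{s}$, ensuring that the continuation reaches $\alpha=0^{+}$. Since $(P_{0})$ admits $u_{\mathcal{N}}$ as its unique solution in $\mathcal{P}^{\circ}$ and the continuation stays strictly above $u_{1,\alpha}$ by Theorem \ref{mthm}(ii)(b), necessarily $\|u\|_{\infty}\to\infty$, which together with the blow-up and monotonicity of $\mathcal{C}_{2}$ from Theorem \ref{mthm}(ii) identifies the upper branch with $\mathcal{C}_{2}$.

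The second step is to exclude a hypothetical third solution $u^{\star}\in\mathcal{P}^{\circ}$ of $(P_{\alpha})$ lying off $\mathcal{C}_{1}^{\prime}$. My plan is a Picone/Brezis--Oswald identity: for two positive solutions $u,v$ of $(P_{\alpha})$ the Robin boundary terms cancel, yielding
\[
\int_{\Omega}\bigl|\nabla v-(v/u)\nabla u\bigr|^{2}\,dx=\int_{\Omega}a\,v^{2}\bigl(v^{q-1}-u^{q-1}\bigr)\,dx\ge 0.
\]
I aim to exploit the Steklov variational characterization
\[
\alpha_{2}\int_{\partial\Omega}\phi^{2}\le\int_{\Omega}|\nabla\phi|^{2}\quad\text{for }\phi\in H^{1}(\Omega)\text{ with }\int_{\partial\Omega}\phi=0,
\]
applied to a test function built from $u$, $v$ and $u_{\mathcal{N}}^{1-q}$ (the latter motivated by the appearance of $\int_{\partial\Omega}u_{\mathcal{N}}^{1-q}$ in \eqref{hip}), so as to force the right-hand side of the Picone identity to be non-positive. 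Equality in the identity then produces $u=v$, a contradiction.

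For (ii), I would show that every nontrivial solution of $(P_{\alpha})$ with $\alpha>0$ lies in $\mathcal{P}^{\circ}$ and then invoke (i). The hypothesis $\partial\Omega\subseteq\partial\Omega_{+}^{a}$ provides a one-sided neighborhood of $\partial\Omega$ where $a\ge 0$, so $u$ is superharmonic there; combined with $\partial_{\nu}u=\alpha u$, the Hopf boundary point lemma prevents $u$ from vanishing on $\partial\Omega$ (a boundary zero at $x_{0}$ would give $\partial_{\nu}u(x_{0})<0\ne 0=\alpha u(x_{0})$). The property $q\in\mathcal{A}_{\mathcal{N}}$ should then rule out interior dead cores, by comparing $u$ with solutions of $(P_{0})$ and invoking the definition of $\mathcal{A}_{\mathcal{N}}$ to propagate strict positivity throughout $\overline{\Omega}$.

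The main obstacle I anticipate is Step 2: carrying out the Picone/Steklov coupling with the precise test function that transforms \eqref{hip} into a reversed inequality, rather than the natural one-sided bound coming from Brezis--Oswald. A natural candidate is $\phi=v^{(1-q)/2}-c$ or a variant involving $u_{\mathcal{N}}^{1-q}$, with the constant $c$ chosen so that $\int_{\partial\Omega}\phi=0$; matching the coefficient $-\int_{\Omega}a\big/\int_{\partial\Omega}u_{\mathcal{N}}^{1-q}$ from \eqref{hip} requires a careful algebraic manipulation and is where all the rigidity of the hypothesis must be absorbed.
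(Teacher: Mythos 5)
Your proposal has a fatal flaw in its second step, and its first step omits the one mechanism that makes the whole argument work. The Picone/Brezis--Oswald route for excluding a third solution cannot succeed as designed: the identity you write holds for \emph{any} pair of solutions $u,v\in\mathcal{P}^{\circ}$ of $(P_{\alpha})$, so if coupling it with the Steklov inequality forced $u=v$, you would have proved that $(P_{\alpha})$ has \emph{at most one} solution in $\mathcal{P}^{\circ}$ --- contradicting the very statement being proved, which asserts exactly two (and in particular $u_{1,\alpha}\neq u_{2,\alpha}$ for $\alpha\in(0,\alpha_{s})$). Nothing in your setup distinguishes the hypothetical third solution $u^{\star}$ from $u_{2,\alpha}$, so no choice of test function can rescue this; and since $a$ changes sign the symmetrized Brezis--Oswald quantity $\int_{\Omega}a\,(v^{2}-u^{2})(v^{q-1}-u^{q-1})$ has no sign anyway. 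You yourself flag this step as not carried out. Likewise, in the continuation step the claim that \eqref{hip} ``precludes $0$ from lying in the spectrum of $L_{u}$'' is precisely what must be proved: on the upper branch one has $\gamma_{1}(\alpha,u)<0$, so the danger is a \emph{higher} eigenvalue $\gamma_{k}$, $k\geq 2$, crossing zero and stopping the IFT, and your proposal gives no reason why this cannot happen.

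The paper's proof supplies exactly that reason and then dispenses with any uniqueness identity. One observes that $\gamma_{k}(\alpha,u)=0$ if and only if $q$ is an eigenvalue of the weighted problem $-\Delta\phi=\lambda\,a u^{q-1}\phi$ in $\Omega$, $\partial_{\nu}\phi=\alpha\phi$ on $\partial\Omega$. Since $\int_{\Omega}au^{q-1}<0$, the principal eigenvalues satisfy $0<\lambda_{1}^{-}\leq 1\leq\lambda_{1}^{+}$, while \eqref{hip} gives $\alpha<\alpha_{s}\leq\alpha_{2}$, hence $\mu_{2}(\alpha)>0$ and, by Proposition \ref{prop:mu2}(iii), $\lambda_{2}^{-}(\alpha,au^{q-1})<0$. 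As $q\in(0,1)$ and $q<1\leq\lambda_{1}^{+}<\lambda_{2}^{+}$, the value $q$ can be none of the non-principal eigenvalues, so $\gamma_{k}\neq 0$ for all $k\geq 2$ and the IFT applies at \emph{every} solution in $\mathcal{P}^{\circ}$ off $\mathcal{C}_{1}$. This both extends $\mathcal{C}_{2}$ over all of $(0,\alpha_{s})$ and shows that any putative extra solution generates a curve that can be continued down to $\alpha\to 0^{+}$, where the known exactness for small $\alpha$ (Proposition \ref{lbuN}(ii)) forces it to coincide with $\mathcal{C}_{2}$; no Picone argument is needed or possible. For part (ii), your Hopf-lemma sketch also leaves open the case where $u$ vanishes identically near a boundary point; the paper instead invokes the positivity results of the companion paper to conclude $q\in\mathcal{A}_{\alpha}$ for all $\alpha>0$.
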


Let us note that several examples of weights $a$ satisfying (\ref{hip}) are
discussed in Section \ref{sec:exa}.

\begin{rem}
\label{r1} \strut

\begin{enumerate}
\item As one can see from its proof, Theorem \ref{ball} holds more generally
under the condition $\alpha_{s}<\alpha_{2}$, instead of \eqref{hip}.

\item In Theorems \ref{mthm} and \ref{ball}, if we assume $q\in\mathcal{A}%
_{\mathcal{N}}$ instead of $q\in\mathcal{I}_{\mathcal{N}}$, then $\left(
A.1\right)  $ is no longer needed, cf. Remark \ref{lbuNN}.
\end{enumerate}
\end{rem}

To conclude the study of $(P_{\alpha})$, we carry out a bifurcation analysis,
with $q$ as parameter, and obtain solutions in $\mathcal{P}^{\circ}$ for
$\alpha>0$ fixed and $q\rightarrow1^{-}$. As a consequence, we provide the
limiting behavior as $q\rightarrow1^{-}$ of $\alpha_{s}(a,q)$ and
$u_{i,\alpha}(q)$, $i=1,2$, whenever these are the only solutions in
$\mathcal{P}^{\circ}$, see Theorem \ref{tt2} and Corollary \ref{cor:asympt}.

Finally, let us mention that $(P_{\alpha})$ has many similarities with the
Neumann problem
\[%
\begin{cases}
-\Delta u=\alpha u+a(x)u^{q} & \mbox{in $\Omega$},\\
u\geq0 & \mbox{in $\Omega$},\\
\partial_{\nu}u=0 & \mbox{on $\partial \Omega$}.
\end{cases}
\leqno{(S_\alpha)}
\]
As a matter of fact, one may see that the linear term $\alpha u$ in the
equation above acts very similarly as in the boundary condition of
$(P_{\alpha})$, as far as the methods used in this article (and in
\cite{KRQU2019}) are concerned. In this way, we complement the results
established for $(S_{\alpha})$ by Alama in \cite{alama}.

The rest of this article is organized as follows. In the next section we give
some examples of $a$ and $q$ satisfying our assumptions. Theorems \ref{tp1}
and \ref{mthm} are proved in Section \ref{sec:pr1-2}, whereas Theorem
\ref{ball}
is proved in Section \ref{sec:prth1.3}. In Section \ref{sec:bif} we follow a
bifurcation approach for $(P_{\alpha})$ with $q\rightarrow1^{-}$. Finally, in
Section \ref{sec:S} we state some results on $(S_{\alpha})$. We also include
an Appendix containing some necessary facts related to some of the stability
results in \cite[Chapter II]{Am76} .

\section{Examples}

\label{sec:exa}

Before proving our results we discuss some examples of $a$ and $q$ satisfying
our assumptions.

\subsection{On the condition \eqref{hip}}

Let $\sigma_{1}^{\mathcal{D}}(a)$ and $\sigma_{1}^{\mathcal{N}}(a)$ be the
unique positive principal eigenvalues with respect to the weight $a$, under
homogeneous Dirichlet and Neumann boundary conditions, respectively. First
note that, for $q$ close to $1$, by \cite[Theorem 1.2]{KRQUnodea}, we have
\begin{equation}
u_{\mathcal{N}}^{1-q}\simeq\frac{\left(  t_{\ast}\phi_{1}\right)  ^{1-q}%
}{\sigma_{1}^{\mathcal{N}}(a)}, \label{eun}%
\end{equation}
where $\phi_{1}\in\mathcal{P}^{\circ}$ is the eigenfunction associated to
$\sigma_{1}^{\mathcal{N}}(a)$, normalized by $\left\Vert \phi_{1}\right\Vert
_{2}=1$,
and
\[
t_{\ast}:=\exp\left[  -\frac{\int_{\Omega}a\left(  x\right)  \phi_{1}^{2}%
\log\phi_{1}}{\int_{\Omega}a\left(  x\right)  \phi_{1}^{2}}\right]  .
\]
In particular, $t_{\ast}$ and $\phi_{1}$ do not depend on $q$, and both are
away from $0$. So, for $q$ close enough to $1$ we have that $u_{\mathcal{N}%
}^{1-q}\simeq\frac{1}{\sigma_{1}^{\mathcal{N}}(a)}$. Thus, for such $q$,
\eqref{hip} is satisfied if
\begin{equation}
-\sigma_{1}^{\mathcal{N}}(a)\int_{\Omega}a<\left\vert \partial\Omega
\right\vert \alpha_{2}. \label{mu}%
\end{equation}
Recall also that $q\in\mathcal{A}_{\mathcal{N}}\subseteq\mathcal{I}%
_{\mathcal{N}}$ whenever $q$ is sufficiently close to $1$. Let us show now
examples of $a$ satisfying \eqref{mu}:

\begin{enumerate}
\item Choose any $a\in C^{\theta}\left(  \overline{\Omega}\right)  $
satisfying $\left(  A.0\right)  $ and $\left(  A.1\right)  $. Taking into
account $\left(  A.0\right)  $, we may fix $k$ such that
\begin{equation}
\max\left\{  C_{a}-\frac{\left\vert \partial\Omega\right\vert \alpha_{2}%
}{\sigma_{1}^{\mathcal{N}}\left(  a\right)  \int_{\Omega}a^{+}},1\right\}
<k<C_{a},\quad\text{with\quad}C_{a}:=\frac{\int_{\Omega}a^{-}}{\int_{\Omega
}a^{+}}, \label{k}%
\end{equation}
where, as usual, $a^{\pm}:=\max\left\{  \pm a,0\right\}  $. We
set $a_{k}:=ka^{+}-a^{-}$. Then $\left(  A.1\right)  $ holds for $a_{k}$, and
the second inequality in (\ref{k}) shows that $\left(  A.0\right)  $ is also
true for $a_{k}$. Moreover, the first inequality in (\ref{k}) gives that
$-\sigma_{1}^{\mathcal{N}}\left(  a\right)  \int_{\Omega}a_{k}<\left\vert
\partial\Omega\right\vert \alpha_{2}$, and so, since $k\geq1$, by the
monotonicity of the principal eigenvalue with respect to the weight (e.g.
\cite[Proposition 4.7]{HT}) we deduce that $a_{k}$ satisfies \eqref{mu}.

\item One can argue similarly if $a\in C(\overline{\Omega})$ is such that
$\int_{\Omega}a=0$ and $\left\{  a_{j}\right\}  \subset C^{\theta}%
(\overline{\Omega})$ is a sequence
satisfying $(A.0)$, $(A.1)$, and $a_{j}\rightarrow a$ in $L^{1}(\Omega)$.
Then, since $\sigma_{1}^{\mathcal{N}}(a)=0$ and $a\mapsto\sigma_{1}%
^{\mathcal{N}}(a)$ is continuous from $L^{1}\left(  \Omega\right)  $ into
$\mathbb{R}$ (cf. \cite[Lemma 2.4]{LY}, which also holds if $\int_{\Omega}%
a=0$), we see that (\ref{mu}) is fulfilled by $a_{j}$ with $j$ large enough.

\item Another class of weights $a$ satisfying \eqref{mu} is the following one:
denote by $B_{R}$ the ball of radius $R$ centered at $0$ in $\mathbb{R}^{N}$.
Let $a\in C^{\theta}(\overline{B_{1}})$ satisfy $(A.0)$ and $(A.1)$, and
$a_{R}\in C^{\theta}(\overline{B_{R}})$ be an extension of $a$ to $B_{R}$ such
that $(A.1)$ holds and $\int_{B_{1}}a<\int_{B_{R}}a_{R}<0$. We claim that for
$N\geq3$ and $R$ large enough $a_{R}$ fulfills \eqref{mu}. To this end, note
that $\sigma_{1}^{\mathcal{N}}(a_{R})<\sigma_{1}^{\mathcal{D}}(a)$ for every
$R>1$. Recall also that $|\partial B_{R}|=NR^{N-1}\omega_{N}$, where
$\omega_{N}$ is the volume of $B_{1}$. Thus%
\[
-\sigma_{1}^{\mathcal{N}}(a_{R})\int_{B_{R}}a_{R}<-\sigma_{1}^{\mathcal{D}%
}(a)\int_{B_{1}}a<NR^{N-2}\omega_{N}=|\partial B_{R}|\alpha_{2}(B_{R})
\]
if $N\geq3$ and $R$ is large enough, which yields \eqref{mu}. \newline
\end{enumerate}

We next modify the construction of the preceeding examples and we prove
\eqref{hip} directly, not by way of \eqref{mu}.

\begin{enumerate}
\item[(iv)] Let $a,$ $a_{j}$ be as in (ii), and satisfying in addition that
$a_{j}\rightarrow a$ in $L^{\infty}(\Omega)$. Choose $0<\varepsilon<1$ and set
$b_{\varepsilon}:=\left(  1-\varepsilon\right)  a^{+}-a^{-}$. Then
$b_{\varepsilon}$ changes sign in $\Omega$ and $b_{\varepsilon}$ fulfills
(A.0). Let us show that, for any $q\in\mathcal{I}_{\mathcal{N}}(b_{\varepsilon
})$, $a_{j}$ satisfies (\ref{mu}) for all $j$ large enough. Indeed, let
$q\in\mathcal{I}_{\mathcal{N}}(b_{\varepsilon})$, and let us stress the
dependence of $(P_{0})$ on the weight $a$ by writing $(P_{0,a})$. We note
first that $b_{\varepsilon}\leq a_{j}$ if $j$ is large enough, and thus
$u_{\mathcal{N}}(b_{\varepsilon})$ is a subsolution of $(P_{0,a_{j}})$ for
such $j$. Now, by \cite[Lemma 2.4]{BPT2}, this problem has arbitrarily large
supersolutions (because $a_{j}$ satisfies (A.0)), so we deduce that
$u_{\mathcal{N}}(a_{j})$ exists and $u_{\mathcal{N}}(a_{j})\geq u_{\mathcal{N}%
}(b_{\varepsilon})$. In particular, $q\in\mathcal{I}_{\mathcal{N}}\left(
a_{j}\right)  $. Hence, since $\int_{\Omega}a_{j}\rightarrow0$, enlarging $j$
if necessary,%
\[
\frac{-\int_{\Omega}a_{j}}{\int_{\partial\Omega}u_{\mathcal{N}}(a_{j})^{1-q}%
}\leq\frac{-\int_{\Omega}a_{j}}{\int_{\partial\Omega}u_{\mathcal{N}%
}(b_{\varepsilon})^{1-q}}\leq\alpha_{2},
\]
as claimed.

\item[(v)] As a particular case of (iv) we see that $a_{k}$ in (i) satisfies
\eqref{hip} also for any $q\in\mathcal{I}_{\mathcal{N}}(a)$ and $k\approx
C_{a}$. Indeed, for $a$ as in (i), define $\widetilde{a}:=C_{a}a^{+}-a^{-}$,
$\varepsilon:=1-1/C_{a}$ and $\widetilde{b}_{\varepsilon}:=\left(
1-\varepsilon\right)  C_{a}a^{+}-a^{-}$. Note that $\int_{\Omega}\widetilde
{a}=0$ and $\widetilde{b}_{\varepsilon}=a$. Also, for any $\left\{
k_{j}\right\}  $ with $0<k_{j}\nearrow C_{a}$, let $\widetilde{a}_{j}%
:=k_{j}a^{+}-a^{-}$. Then, item (iv) with $\widetilde{a}$, $\widetilde{a}_{j}$
and $\widetilde{b}_{\varepsilon}$ in place of $a$, $a_{j}$ and $b_{\varepsilon
}$, respectively, yields the desired assertion.
\end{enumerate}

\subsection{On the condition $q \in\mathcal{A}_{\mathcal{N}}$}

As already mentioned, under $\left(  A.0\right)  $ and $(A.1)$, we have
$\mathcal{A}_{\mathcal{N}}=\left(  q_{\mathcal{N}},1\right)  $ for some
$q_{\mathcal{N}}=q_{\mathcal{N}}(a)\geq0$. However, a useful \textit{upper}
estimate
(i.e. $<1$) on $q_{\mathcal{N}}$ is hard to obtain in general, since
$q_{\mathcal{N}}$ can be arbitrarily close
to $1$ (see \cite[Theorem 1.4(ii)]{KRQUnodea}). Let us show a situation where
this can be done. Take $0<R_{0}<R$, $\Omega:=B_{R}$, and $a\in C^{\theta
}\left(  \overline{\Omega}\right)  $ a radial function satisfying $\left(
A.0\right)  $, $a\leq0$ in $\overline{B_{R_{0}}}$ and $a>0$ in $A_{R_{0}%
}:=\Omega\diagdown\overline{B_{R_{0}}}$ (note that $a$ fulfills $\left(
A.1\right)  $ and $\partial\Omega\subseteq\partial\Omega_{+}^{a}$). Then, as a
consequence of \cite[Corollary 4.4]{KRQUnodea} we have that
\[
q_{\mathcal{N}}\leq\frac{1-K}{1-K+2KN^{-1}}:=\underline{q},\quad
\text{where\quad}K=K\left(  a\right)  :=\frac{\int_{A_{R_{0}}}a^{+}%
}{\left\vert B_{R_{0}}\right\vert \left\Vert a^{-}\right\Vert _{L^{\infty
}(B_{R_{0}})}}.
\]
Let us observe that, by $\left(  A.0\right)  $, $K<1$. A similar result holds
in the one-dimensional case without requiring any eveness assumptions, see
\cite[Remark 3.4]{KRQUnodea}.

Also, an inspection of the proof of \cite[Theorem 1.8 and Corollary
4.4]{KRQUnodea} shows that, for $a$ as above and $q\in(\underline{q},1)$, we
have
\[
u_{\mathcal{N}}^{1-q}\geq\frac{1-q}{\left\vert \partial B_{R}\right\vert }%
\int_{R_{0}}^{R}\left(  \int_{A_{t}}a^{+}\right)  dt\quad\mbox{on }\partial
B_{R}.
\]
Thus, \eqref{hip} holds whenever%
\[
\frac{\int_{B_{R_{0}}}a^{-}-\int_{A_{R_{0}}}a^{+}}{\int_{R_{0}}^{R}%
(\int_{A_{t}}a^{+})dt}\leq\frac{1-q}{R}.
\]
In particular, it follows that given \textit{any} $q\in\left(  0,1\right)  $,
we can find a weight $a$ such that $q\in\mathcal{A}_{\mathcal{N}}(a)$ and
\eqref{hip} holds, choosing suitably $a$ so that $\int_{B_{R}}a\approx0$ and
$K\approx1$.

\section{Proof of Theorems \ref{tp1} and \ref{mthm}}

\label{sec:pr1-2}

\subsection{Proof of Theorem \ref{tp1}}

The proof of Theorem \ref{tp1} is a direct consequence of Propositions
\ref{p1} and \ref{lbuN}(i) below.

\begin{prop}
\label{p1} Let $\alpha\geq0$ and $0<q<1$. If $u$ is a supersolution of
$(P_{\alpha})$ such that $u>0$ in $\Omega_{+}^{a}$ then $\int_{\text{supp }%
u}a<0$. In particular $(A.0)$ holds.
\end{prop}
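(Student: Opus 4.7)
The plan is to test the supersolution inequality against the smooth positive function $(u+\varepsilon)^{-q}$, $\varepsilon>0$, and then let $\varepsilon\to 0^{+}$. Multiplying $-\Delta u\geq au^{q}$ by $(u+\varepsilon)^{-q}$ and applying the divergence theorem over $\Omega$, the identity $\nabla\bigl((u+\varepsilon)^{-q}\bigr)=-q(u+\varepsilon)^{-q-1}\nabla u$ yields
\[
-\int_{\partial\Omega}\partial_{\nu}u\,(u+\varepsilon)^{-q}\;-\;q\int_{\Omega}|\nabla u|^{2}(u+\varepsilon)^{-q-1}\;\geq\;\int_{\Omega}a\,u^{q}(u+\varepsilon)^{-q}.
\]
Since $\partial_{\nu}u\geq\alpha u\geq 0$ on $\partial\Omega$, the boundary term is nonpositive, whence
\[
\int_{\Omega}a\,u^{q}(u+\varepsilon)^{-q}\;\leq\;-q\int_{\Omega}|\nabla u|^{2}(u+\varepsilon)^{-q-1}\;\leq\;0.
\]

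Next I would pass to the limit $\varepsilon\to 0^{+}$. The integrand on the left is bounded in absolute value by $|a|$ (since $u^{q}/(u+\varepsilon)^{q}\leq 1$) and converges pointwise to $a\,\mathbf{1}_{\{u>0\}}$, so dominated convergence gives $\int_{\{u>0\}}a$. On the right, $|\nabla u|^{2}(u+\varepsilon)^{-q-1}$ is monotone increasing in $1/\varepsilon$ and, noting that $\nabla u=0$ on $\{u=0\}\cap\Omega$ (since $u\geq 0$ attains its minimum there), monotone convergence yields the limit $J:=\int_{\{u>0\}}|\nabla u|^{2}u^{-q-1}\in[0,\infty]$. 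As the left-hand limit is finite, so is $J$, and we obtain
\[
\int_{\{u>0\}}a\;\leq\;-qJ\;\leq\;0.
\]

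To upgrade this to a strict inequality, I would argue by contradiction. If $J=0$ then $\nabla u\equiv 0$ on the open set $\{u>0\}$ by continuity, so $u$ is a positive constant on every connected component of $\{u>0\}$. Since $\Omega_{+}^{a}\subseteq\{u>0\}$, this forces $u\equiv c>0$ on some connected component of $\Omega_{+}^{a}$, and then $-\Delta u=0$ there while the supersolution inequality requires $-\Delta u\geq a c^{q}>0$, a contradiction. Hence $J>0$ and $\int_{\{u>0\}}a<0$. To finish, observe that $\operatorname{supp} u\setminus\{u>0\}\subseteq\partial\{u>0\}\cap\Omega\subseteq\Omega\setminus\Omega_{+}^{a}$ (because $\Omega_{+}^{a}$ is open and sits inside $\{u>0\}$), where $a\leq 0$; thus $\int_{\operatorname{supp}u}a\leq\int_{\{u>0\}}a<0$. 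Likewise $\Omega\setminus\operatorname{supp}u\subseteq\Omega\setminus\Omega_{+}^{a}$, so $\int_{\Omega\setminus\operatorname{supp}u}a\leq 0$, and adding gives $\int_{\Omega}a<0$, i.e.\ $(A.0)$. The main delicate step is the strict-inequality argument, the rest being a standard $\varepsilon$-regularization designed to sidestep the singularity of $u^{-q}$ on $\{u=0\}$.
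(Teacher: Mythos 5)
Your proposal is correct and follows essentially the same route as the paper: test against $(u+\varepsilon)^{-q}$, discard the nonpositive boundary and gradient terms, and pass to the limit. The paper delegates the limit passage and the strictness of the inequality to the cited argument of Alama, whereas you supply these details explicitly (including the correct contradiction when $u$ is locally constant on a component of $\Omega_{+}^{a}$), so the two proofs coincide in substance.
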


\noindent\textit{Proof}. We argue as in \cite[Proposition 2.3]{alama}, which
is inspired by \cite[Lemma 2.1]{BPT2}. Take $(u+\varepsilon)^{-q}$ as test
function in $(P_{\alpha})$ to get
\[
-q\int_{\Omega}\frac{\left\vert \nabla u\right\vert ^{2}}{(u+\varepsilon
)^{q+1}}\geq\int_{\Omega}a\left(  \frac{u}{u+\varepsilon}\right)  ^{q}%
+\alpha\int_{\partial\Omega}\frac{u}{(u+\varepsilon)^{q}}\geq\int_{\Omega
}a\left(  \frac{u}{u+\varepsilon}\right)  ^{q}.
\]
Following the argument in \cite[Proposition 2.3]{alama} we deduce that
$\int_{\text{supp }u}a<0$. Since $a\leq0$ on the region where $u$ possibly
vanishes, we find that $\int_{\Omega}a\leq\int_{\text{supp }u}a<0$. \qed

\begin{prop}
\label{lbuN}

Assume $(A.0)$, $(A.1)$ and $q \in\mathcal{I}_{\mathcal{N}}$.

\begin{enumerate}
\item If $u$ is a supersolution of $(P_{\alpha})$ for $\alpha>0$ and $u>0$ in
$\Omega_{+}^{a}$, then $u\geq u_{\mathcal{N}}$.

\item For $\alpha>0$ small enough $(P_{\alpha})$ has, in $\mathcal{P}^{\circ}%
$, exactly two solutions $u_{1,\alpha},u_{2,\alpha}$. Moreover:

\begin{enumerate}
\item $u_{1,\alpha}=u_{\mathcal{N}}$ at $\alpha=0$, and there exists some
$\alpha_{\ast}>0$ such that the map $\alpha\mapsto u_{1,\alpha}$ is
$C^{\infty}$ from $[0,\alpha_{\ast})$ to $C^{1}(\overline{\Omega})$,
increasing, $\gamma_{1}(\alpha,u_{1,\alpha})>0$, and $\gamma_{1}(\alpha_{\ast
},u_{1,\alpha_{\ast}})=0$. Moreover, for every $\beta\in(0,\alpha_{s})$ the
solutions set of $(P_{\alpha})$ in a neighborhood of $(\beta,u_{1,\beta})$ is
given exactly by $(\alpha,u_{1,\alpha})$.

\item there exists some $\overline{\alpha}>0$ such that the map $\alpha\mapsto
u_{2,\alpha}$ is $C^{\infty}$ from $(0,\overline{\alpha})$ to $C^{1}%
(\overline{\Omega})$, and $\min_{\overline{\Omega}}u_{2,\alpha}\rightarrow
\infty$ as $\alpha\rightarrow0^{+}$. Moreover, for every $\beta\in
(0,\overline{\alpha})$ the solutions set in a neighborhood of $(\beta
,u_{1,\beta})$ is given exactly by $(\alpha,u_{2,\alpha})$.
\end{enumerate}
\end{enumerate}
\end{prop}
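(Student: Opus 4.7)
The plan is to prove (i) first and then use it throughout the proof of (ii). For part (i), I first note that since $u$ is a supersolution of $(P_{\alpha})$ with $\alpha>0$, the boundary inequality $\partial_{\nu} u \geq \alpha u \geq 0$ upgrades $u$ to a supersolution of $(P_{0})$. Hence the claim reduces to showing that $u \geq u_{\mathcal{N}}$ for any supersolution $u$ of $(P_{0})$ that is positive on $\Omega_{+}^{a}$. The natural route is a sublinear comparison of Brezis-Oswald/Picone type: I would multiply the supersolution inequality for $u$ by the Diaz-Saa test function $(u_{\mathcal{N}}^{2} - u^{2})^{+}/u$ and the equation for $u_{\mathcal{N}}$ by $(u_{\mathcal{N}}^{2} - u^{2})^{+}/u_{\mathcal{N}}$, integrate, and subtract; the sublinearity $q<1$ together with $\partial_{\nu} u_{\mathcal{N}} = 0 \leq \partial_{\nu} u$ forces $\{u < u_{\mathcal{N}}\}$ to have zero measure. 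The main care is in the region outside $\Omega_{+}^{a}$ where $u$ may vanish, which requires a regularization $(u+\varepsilon)$ and a passage to the limit; an alternative is a monotone iteration below from $u$ combined with the uniqueness result \cite[Theorem 3.1]{BPT2} for solutions of $(P_{0})$ positive on $\Omega_{+}^{a}$.

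For part (ii)(a), I would apply the implicit function theorem to $F(\alpha, u) := (-\Delta u - au^{q},\, \partial_{\nu} u - \alpha u)$ at $(0, u_{\mathcal{N}})$. Invertibility of $F_{u}(0, u_{\mathcal{N}})$ follows because the principal eigenvalue $\gamma_{1}(0, u_{\mathcal{N}})>0$ by the asymptotic stability of $u_{\mathcal{N}}$ established in \cite{KRQU2019}. The resulting smooth branch $u_{1,\alpha}$ extends as long as $\gamma_{1}(\alpha, u_{1,\alpha})>0$, and I set $\alpha_{\ast}$ to be the first value where $\gamma_{1}$ vanishes. Monotonicity in $\alpha$ follows from differentiating the equation in $\alpha$: $w := \partial_{\alpha} u_{1,\alpha}$ solves $(-\Delta - q a u_{1,\alpha}^{q-1}) w = 0$ in $\Omega$ with $\partial_{\nu} w - \alpha w = u_{1,\alpha}$ on $\partial\Omega$, and positivity of the right-hand side combined with $\gamma_{1}>0$ yields $w>0$ via Krein-Rutman together with the strong maximum principle. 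Local uniqueness of the branch is automatic from the IFT.

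For part (ii)(b), the existence of the upper branch comes from setting $u_{2,\alpha} := \alpha^{-1/(1-q)} w_{2,\alpha}$, where $w_{2,\alpha}$ is the Chabrowski-Tintarev solution of $(R_{\alpha})$ satisfying $w_{2,\alpha} \to c_{a} > 0$ as $\alpha \to 0^{+}$ by \eqref{w1w2}; then $\min_{\overline{\Omega}} u_{2,\alpha} \to \infty$. The $C^{\infty}$ curve structure and local uniqueness at each $(\alpha, u_{2,\alpha})$ again follow from the IFT, once I verify that the linearization is invertible; for this, $u_{2,\alpha}$ is linearly unstable ($\gamma_{1}<0$) and the corresponding eigenvalue separates from $0$ for $\alpha$ small by comparison with the linearization at the constant $c_{a}$. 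To establish that $u_{1,\alpha}$ and $u_{2,\alpha}$ are the only solutions in $\mathcal{P}^{\circ}$ for small $\alpha$, I take any such solution $v$; by (i), $v \geq u_{\mathcal{N}}$. If $\{v\}$ is $L^{\infty}$-bounded as $\alpha \to 0^{+}$, elliptic estimates yield (along a subsequence) a limit $v_{0}$ in $\mathcal{P}^{\circ}$ solving $(P_{0})$, forcing $v_{0} = u_{\mathcal{N}}$ by uniqueness and then $v = u_{1,\alpha}$ by IFT local uniqueness; otherwise, rescaling as in \eqref{w1w2} shows $\alpha^{1/(1-q)} v \to c_{a}$ and the IFT yields $v = u_{2,\alpha}$.

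The main obstacle is the control of the upper branch $u_{2,\alpha}$ as $\alpha \to 0^{+}$: both the invertibility of the linearization must be established uniformly in $\alpha$ small (where $u_{2,\alpha}$ blows up), and the rescaling argument must rule out a third accumulation profile, which requires drawing on the variational framework from \cite{CT14}. The remaining pieces are a careful assembly of IFT, the sublinear comparison of (i), and the Chabrowski-Tintarev asymptotics.
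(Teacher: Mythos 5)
Your overall architecture for part (ii) matches the paper's (IFT from $(0,u_{\mathcal{N}})$ for the lower branch, rescaling of a branch emanating from the constant $c_{a}$ for the upper branch, exactness via the dichotomy bounded/unbounded plus local uniqueness), but there are two genuine gaps. First, in part (i) your primary route fails: the Diaz--Saa/Picone comparison requires $s\mapsto a(x)s^{q}/s=a(x)s^{q-1}$ to be nonincreasing in $s$, and since $q<1$ this holds only where $a\geq 0$; on $\{a<0\}$ the map is increasing, so the resulting term $\int_{\{u<u_{\mathcal{N}}\}}a\,(u^{q-1}-u_{\mathcal{N}}^{q-1})(u_{\mathcal{N}}^{2}-u^{2})$ has no sign and the argument collapses for a sign-changing weight. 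Your fallback (monotone iteration plus the uniqueness theorem of \cite{BPT2}) is the right idea but is missing the essential ingredient: iterating downward from the supersolution $u$ may converge to a solution of $(P_{0})$ that vanishes on some connected components of $\Omega_{+}^{a}$, in which case the uniqueness result of \cite[Theorem 3.1]{BPT2} (which applies only to solutions positive on all of $\Omega_{+}^{a}$) cannot identify the limit with $u_{\mathcal{N}}$. The paper pins the iteration from below by constructing, in each component $\Omega_{k}$ of $\Omega_{+}^{a}$, a small multiple of the first Dirichlet eigenfunction of $-\Delta\phi=\lambda a\phi$ on a ball $B_{k}\Subset\Omega_{k}$, extended by zero; the maximum of these is a weak subsolution below $u$, positive in each $B_{k}$, so the sandwiched solution $v$ is positive on $\Omega_{+}^{a}$ by the strong maximum principle and equals $u_{\mathcal{N}}$. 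This construction is precisely where $(A.1)$ enters, and it is absent from your sketch.

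Second, in part (ii)(b) the local structure of the upper branch near $\alpha=0$ cannot be obtained by a plain IFT: in the variables $w=\alpha^{1/(1-q)}u$ the limit point is $(0,c_{a})$, and the linearization of $(R_{\alpha})$ there degenerates into the Neumann Laplacian (as $\alpha\to 0^{+}$ one has $u_{2,\alpha}\to\infty$, so the potential $q\alpha a u_{2,\alpha}^{q-1}\to 0$ and the boundary coefficient $\alpha\to 0$), whose kernel contains the constants; correspondingly $\gamma_{1}(\alpha,u_{2,\alpha})\to 0^{-}$, so the relevant eigenvalue does \emph{not} separate from $0$ as you claim. The paper obtains both the $C^{\infty}$ curve $w(\alpha)$ with $w(0)=c_{a}$ and the local exactness (ruling out a third accumulation profile) from a Lyapunov--Schmidt reduction around the line of constants, see \cite[Proposition 3.11]{KRQU2019}; your closing paragraph correctly flags this as the main obstacle but does not resolve it. The remaining pieces --- the IFT continuation of $u_{1,\alpha}$, its monotonicity via positivity of $\partial_{\alpha}u_{1,\alpha}$, and the compactness argument identifying any bounded family of solutions with the lower branch --- are correct and coincide with the paper's proof.
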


\noindent\textit{Proof}.

\begin{enumerate}
\item First of all, it is clear that $u$ is a supersolution of $(P_{0})$.
Now, by $\left(  A.1\right)  $ we have that $\Omega_{+}^{a}=\cup_{k=1}%
^{n}\Omega_{k}$, where $\Omega_{k}$ are smooth, open and connected. For each
$k$, we take a ball $B_{k}\Subset\Omega_{k}$
and consider the Dirichlet eigenvalue problem
\[%
\begin{cases}
-\Delta\phi=\lambda a(x)\phi & \mbox{ in }B_{k},\\
\phi=0 & \mbox{ on }\partial B_{k}.
\end{cases}
\]
By $\phi_{k}$ we denote a positive eigenfunction associated with the first
eigenvalue of this problem, and extended by $0$ outside $B_{k}$. If
$\varepsilon=\varepsilon_{k}>0$ is small enough then $\varepsilon\phi_{k}$ is
a subsolution of $(P_{0})$ smaller than $u$, since $u>0$ in every $\Omega_{k}%
$. Hence $\hat{\phi}:=\max_{k}\{\varepsilon_{k}\phi_{k}\}$ is a (weak)
subsolution of $(P_{0})$ and $\hat{\phi}>0$ in $B_{k}$ for all $k$. By the sub
and supersolutions method, we have a solution $v$ of $(P_{0})$ such that
$\hat{\phi}\leq v\leq u$. In particular, $v>0$ in $B_{k}$ for all $k$. By the
strong maximum principle, we deduce that $v>0$ in $\Omega_{k}$ for all $k$,
i.e. $v>0$ in $\Omega_{+}^{a}$. Now, under $\left(  A.1\right)  $, there is at
most one solution of $\left(  P_{0}\right)  $ which is positive in $\Omega
_{+}^{a}$ \cite[Theorem 3.1]{BPT2}. Therefore, $v=u_{\mathcal{N}}$, and so
$u_{\mathcal{N}}\leq u$, as claimed.

\item Since $\gamma_{1}(0,u_{\mathcal{N}})>0$ (see \cite[Lemma 2.5]%
{KRQU2019}), the IFT provides the existence of a $C^{\infty}$ (and increasing,
by \cite[Theorem 7.10]{LGbook13}) solution curve $\alpha\mapsto u_{1,\alpha
}\in\mathcal{P}^{\circ}$ for $\alpha>0$ small. Let
$\alpha_{\ast}>0$ be the maximal $\alpha$ of this curve. Recalling that
$\alpha_{s}<\infty$ we have that $\alpha_{\ast}<\infty$. Furthermore, taking
into account the fact that $u_{1,\alpha}$ is increasing and the \textit{a
priori} upper bound of nontrivial solutions of $(P_{\alpha})$ for a compact
interval in $(0,\infty)$ (see \cite[Proposition 3.2]{KRQU2019}), letting
$\alpha\nearrow\alpha_{\ast}$ we see that there exists $u_{\alpha_{\ast}}%
\in\mathcal{P}^{\circ}$ solution of $\left(  P_{\alpha_{\ast}}\right)  $.
Moreover $\gamma_{1}(\alpha_{\ast},u_{1,\alpha_{\ast}})=0$ by the maximality
of $\alpha_{\ast}$ (otherwise we use the IFT to extend the curve beyond
$\alpha_{\ast}$) . \newline On the other hand, let $w(\alpha):=t(\alpha
)+\psi(\alpha,t(\alpha))$, where $t(\alpha)$ and $\psi(\alpha,t(\alpha))$ are
as in \cite[Proposition 3.11]{KRQU2019}. By this proposition, for all
$\alpha>0$ small enough $w(\alpha)\in\mathcal{P}^{\circ}$ is a solution of the
problem
\[%
\begin{cases}
-\Delta w=\alpha a(x)w^{q} & \mbox{in}\ \Omega,\\
w\geq0 & \mbox{in}\ \Omega,\\
\partial_{\nu}w=\alpha w & \mbox{on}\ \partial\Omega,
\end{cases}
\leqno{(R_\alpha)}
\]
satisfying $t(0)=c_{a}$ and $\psi(0,c_{a})=0$, where $c_{a}$ is the positive
constant introduced by \eqref{def:ca}.
In particular, $w(0)=c_{a}>0$, and still by \cite[Proposition 3.11]{KRQU2019}
we know that $\alpha\mapsto w(\alpha)$ is a $C^{\infty}$ mapping from
$(-\alpha_{0},\alpha_{0})$ to $W^{2,s}(\Omega)$, $s>N$,
for some $\alpha_{0}>0$ small. Moreover, it is easy to see that $u_{2,\alpha
}:=\alpha^{-\frac{1}{1-q}}w(\alpha)$ is a solution of $\left(  P_{\alpha
}\right)  $ with $\min_{\overline{\Omega}}u_{2,\alpha}\rightarrow\infty$ as
$\alpha\rightarrow0^{+}$.\newline Let us show the exactness assertion. From
\cite[Propositions 3.2, 3.10, 3.11(ii)]{KRQU2019} and the change of variables
$u=\alpha^{-\frac{1}{1-q}}w$, we deduce that $u_{2,\alpha}$ is the only
solution which blows up as $\alpha\rightarrow0^{+}$. More precisely, there
exists $C>0$ such that, except for $u_{2,\alpha}$, we have $\Vert
u\Vert_{C(\overline{\Omega})}\leq C$ for all nontrivial solutions of
$(P_{\alpha})$ for $\alpha$ close to $0$. Assume $u_{j}\in\mathcal{P}^{\circ}$
is a solution of $(P_{\alpha_{j}})$ with $\alpha_{j}\rightarrow0^{+}$ and
$u_{j}$ is not on the curve $(\alpha,u_{2,\alpha})$. By (i) we know that
$u_{\mathcal{N}}\leq u_{j}\leq C$ on $\overline{\Omega}$. Up to a subsequence,
we have $u_{j}\rightarrow u_{0}$, $u_{0}\geq u_{\mathcal{N}}$, and $u_{0}$ is
a solution of $(P_{0})$. Since $u_{\mathcal{N}}$ is unique we get
$u_{0}=u_{\mathcal{N}}$, and by the IFT, we conclude that $u_{j}$ is on the
curve $(\alpha,u_{1,\alpha})$ for $j$ large enough. The remaining assertions
follow directly from the IFT. \qed\newline
\end{enumerate}

\begin{rem}
\label{lbuNN} As an alternative to Proposition \ref{lbuN}(i), one can show
that if $q\in\mathcal{A}_{\mathcal{N}}$ and $u$ is a supersolution of
$(P_{\alpha})$ with $\alpha>0$ and $u \not \equiv 0$ in $\Omega_{+}^{a}$ then
$u\geq u_{\mathcal{N}}$. Indeed, arguing as in the proof of Proposition
\ref{lbuN}(i) we may construct a nontrivial solution $v$ of $\left(
P_{0}\right)  $ with $v\leq u$. Since $q\in\mathcal{A}_{\mathcal{N}}$ we must
have $v\in\mathcal{P}^{\circ}$. Using the uniqueness result in \cite[Lemma
3.1]{BPT2} we deduce that $v=u_{\mathcal{N}}$, and the conclusion follows.
\end{rem}

\subsection{Proof of Theorem \ref{mthm}}

Theorem \ref{mthm} is an immediate consequence of Theorem \ref{mthm2} below.
Before stating this result, we recall in the next paragraph some facts
concerning the stability of the solutions $u\in\mathcal{P}^{\circ}$ of
$(P_{\alpha})$.

Given a solution $u\in\mathcal{P}^{\circ}$ of $(P_{\alpha})$, let us consider
the linearized eigenvalue problem
\begin{equation}%
\begin{cases}
\mathcal{L}(\alpha,u)\phi:=-\Delta\phi-qa(x)u^{q-1}\phi=\gamma(\alpha,u)\phi &
\mbox{in $\Omega$},\\
\mathcal{B}(\alpha,u)\phi:=\partial_{\nu}\phi-\alpha\phi=0 &
\mbox{on $\partial \Omega$}.
\end{cases}
\label{ep}%
\end{equation}
We denote by $\gamma_{1}=\gamma_{1}(\alpha,u)$ the smallest eigenvalue of
\eqref{ep} and by $\phi_{1}=\phi_{1}(\alpha,u)$ a positive eigenfunction
associated with $\gamma_{1}$.
We remark that $\gamma_{1}$ is simple, and $\phi_{1}\in\mathcal{P}^{\circ}$.
Also, if $\alpha\mapsto u(\alpha)$ is continuous in $C^{2+r}(\overline{\Omega
})$ for some $r\in(0,1)$, then so is $\alpha\mapsto\gamma_{1}(\alpha
,u(\alpha))$. In fact, with computations as those in \cite[p. 1155]{U12} one
can see that this map inherits the regularity of the map $\alpha\mapsto
u=u(\alpha)$, by the implicit function theorem (in short, IFT). Recall that
$u\in\mathcal{P}^{\circ}$ is said to be

\begin{itemize}
\item \textit{asymptotically stable} if $\gamma_{1}(\alpha,u)>0$,

\item \textit{weakly stable} if $\gamma_{1}(\alpha,u)\geq0$,

\item \textit{unstable} if $\gamma_{1}(\alpha,u)<0$.
\end{itemize}


\begin{theorem}
\label{mthm2}

Assume $(A.0),$ $(A.1)$, and $q\in\mathcal{I}_{\mathcal{N}}$. Then:

\begin{enumerate}
\item $(P_{\alpha})$ has a solution curve $\mathcal{C}_{1}=\left\{
(\alpha,u_{1,\alpha});0\leq\alpha\leq\alpha_{s}\right\}  $ such that
$\alpha\mapsto u_{1,\alpha}\in\mathcal{P}^{\circ}$ is continuous and
increasing on $[0,\alpha_{s}]$ and $C^{\infty}$ in $[0,\alpha_{s})$, and
$u_{1,0}=u_{\mathcal{N}}$. Moreover,

\begin{enumerate}
\item $u_{1,\alpha_{s}}$ is the unique solution of $(P_{\alpha_{s}})$ in
$\mathcal{P}^{\circ}$;

\item $u_{1,\alpha}$ is minimal in $\mathcal{P}^{\circ}$ for $\alpha\in
\lbrack0,\alpha_{s})$;

\item for every $\beta\in(0,\alpha_{s})$ the solutions set of $(P_{\alpha})$
in a neighborhood of $(\beta,u_{1,\beta})$ is given exactly by $\mathcal{C}%
_{1}$.

\item $u_{1,\alpha}$ is asymptotically stable for $\alpha\in\lbrack
0,\alpha_{s})$, and weakly stable but not asymptotically stable for
$\alpha=\alpha_{s}$.
Furthermore, $u_{1,\alpha}$ is the only stable solution of $(P_{\alpha})$ for
$\alpha\in(0,\alpha_{s}]$.
\end{enumerate}

\item $(P_{\alpha})$ has a solution curve $\mathcal{C}_{2}=\left\{
(\alpha,u_{2,\alpha});0<\alpha\leq\overline{\alpha}\right\}  $ for some
$\overline{\alpha}\in(0,\alpha_{s}]$, such that $\alpha\mapsto u_{2,\alpha}%
\in\mathcal{P}^{\circ}$ is continuous and decreasing on
$(0,\overline{\alpha}]$ and $C^{\infty}$ in $(0,\overline{\alpha})$. In
addition, $u_{2,\alpha}$ is unstable for $\alpha\in(0,\overline{\alpha})$ and
$\min_{\overline{\Omega}}u_{2,\alpha}\rightarrow\infty$ as $\alpha
\rightarrow0^{+}$. Moreover:

\begin{enumerate}
\item for every $\beta\in(0,\overline{\alpha})$ the solutions set of
$(P_{\alpha})$ in a neighborhood of $(\beta,u_{2,\beta})$ is given exactly by
$\mathcal{C}_{2}$;

\item for every $\alpha\in(0,\overline{\alpha})$ the solutions $u_{1,\alpha
},u_{2,\alpha}$ are strictly ordered as follows: $u_{2,\alpha}-u_{1,\alpha}%
\in\mathcal{P}^{\circ}$;

\item $u_{1,\alpha}$ and $u_{2,\alpha}$ are the only solutions of $(P_{\alpha
})$ in $\mathcal{P}^{\circ}$ for $\alpha>0$ small.
\end{enumerate}

\item $(P_{\alpha})$
has a $C^{\infty}$ solution curve $\mathcal{C}_{3}=\{(\alpha(t),u(t));-t_{0}%
<t<t_{0}\}$ for some $t_{0}>0$ small, such that $u(t)\in\mathcal{P}^{\circ}$,
$(\alpha(0),u(0))=(\alpha_{s},u_{1,\alpha_{s}})$, $\alpha^{\prime}%
(0)=0>\alpha^{\prime\prime}(0)$ and
$u^{\prime}(0)\in\mathcal{P}^{\circ}$, i.e., the solution curve \textrm{bends
to the left} in a neighborhood of $(\alpha_{s},u_{1,\alpha_{s}})$. In addition:

\begin{enumerate}
\item $\{(\alpha(t),u(t));-t_{0}<t<0\}$ represents the lower branch of
$\mathcal{C}_{3}$ satisfying $u(t)=u_{1,\alpha(t)}$, whereas $\{(\alpha
(t),u(t));0<t<t_{0}\}$ represents the upper one; these branches are increasing
and decreasing, respectively.

\item in a neighborhood of $(\alpha_{s},u_{1,\alpha_{s}})$ the solutions set
of $(P_{\alpha})$ is given exactly by $\mathcal{C}_{3}$.
\end{enumerate}

Furthermore, $(P_{\alpha})$ has exactly two solutions in $\mathcal{P}^{\circ}$
for $\alpha$ in a left neighborhood of $\alpha_{s}$.

\item Assume additionally that $0\not \equiv a\geq0$ in some smooth domain
$D\subset\Omega$ such that $\left\vert \partial D\cap\partial\Omega\right\vert
>0$. Then $(P_{\alpha})$ has a \textrm{component} \textrm{(}i.e., a maximal
closed, connected subset\textrm{)} $\mathcal{C}_{\ast}$ of solutions in
$[0,\alpha_{s}]\times\mathcal{P}^{\circ}$ which includes $\mathcal{C}_{1}$,
$\mathcal{C}_{2}$ and $\mathcal{C}_{3}$, and satisfies
\begin{equation}
\mathcal{C}_{\ast}\cap\{(\alpha,0),(\alpha,\infty),(0,u)\}=\{(0,\infty
),(0,u_{\mathcal{N}})\}. \label{compoCast}%
\end{equation}
In particular, $(P_{\alpha})$ has at least two solutions in $\mathcal{P}%
^{\circ}$ for every $\alpha\in(0,\alpha_{s})$, see
Figure \ref{fig19_0422compo}(i).
\end{enumerate}
\end{theorem}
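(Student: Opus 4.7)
The plan is to assemble Theorem~\ref{mthm2} from three main ingredients: (a) a global monotone continuation of the local $\mathcal{C}_1$-curve provided by Proposition~\ref{lbuN}(ii)(a) up to $\alpha_s$, (b) the Crandall--Rabinowitz saddle-node bifurcation theorem at the endpoint $(\alpha_s, u_{1,\alpha_s})$, and (c) a Whyburn-type connectedness argument for the global component $\mathcal{C}_*$.

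For parts (i) and (iii), I start from the maximal IFT curve $[0,\alpha_\ast)\ni\alpha\mapsto u_{1,\alpha}\in\mathcal{P}^{\circ}$ of Proposition~\ref{lbuN}(ii)(a), on which $u_{1,\alpha}$ is increasing, $\gamma_1>0$, and $\gamma_1(\alpha_\ast, u_{1,\alpha_\ast})=0$. The \textit{a priori} bound of \cite[Proposition 3.2]{KRQU2019}, elliptic regularity, and Proposition~\ref{lbuN}(i) force $u_{1,\alpha}\to u_{1,\alpha_\ast}\in\mathcal{P}^{\circ}$ in $C^{2+r}(\overline{\Omega})$ as $\alpha\nearrow\alpha_\ast$, so the endpoint is a genuine $\mathcal{P}^{\circ}$-solution. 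At this degenerate point the linearized pair $(\mathcal{L},\mathcal{B})$ has a simple zero eigenvalue with positive eigenfunction $\phi_1\in\mathcal{P}^{\circ}$, and the Crandall--Rabinowitz transversality condition $\int_{\partial\Omega} u_{1,\alpha_\ast}\phi_1\ne 0$ holds trivially; hence one obtains a smooth curve $\mathcal{C}_3=\{(\alpha(t), u(t))\}$ through $(\alpha_\ast, u_{1,\alpha_\ast})$ with $\alpha'(0)=0$ and $u'(0)=\phi_1$. Because $\mathcal{C}_1$ approaches $(\alpha_\ast, u_{1,\alpha_\ast})$ from the left and no $\mathcal{P}^{\circ}$-solution exists for $\alpha>\alpha_s$, continuity forces $\alpha(t)\le\alpha_\ast$ for all $t$ small, giving $\alpha''(0)\le 0$. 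The strict inequality follows from the explicit relation
\[
\alpha''(0)\,\int_{\partial\Omega}u_{1,\alpha_\ast}\phi_1 \;=\; q(q-1)\int_{\Omega} a\,u_{1,\alpha_\ast}^{q-2}\phi_1^{3},
\]
obtained by differentiating $(P_{\alpha(t)})$ twice and pairing $u''(0)$'s equation with $\phi_1$, once one checks that the right-hand side does not vanish (via a test-function computation exploiting the sublinearity $q<1$). This simultaneously proves the leftward bending and yields $\alpha_\ast=\alpha_s$.

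The remaining items of (i) and (ii) then fall into place: minimality (i)(b) is obtained by monotone iteration starting from $u_{\mathcal{N}}$ (a strict subsolution of $(P_\alpha)$ for $\alpha>0$, since $\partial_\nu u_{\mathcal{N}}=0<\alpha u_{\mathcal{N}}$) together with any $\mathcal{P}^{\circ}$-supersolution; uniqueness at $\alpha_s$ (i)(a) follows because any second $\mathcal{P}^{\circ}$-solution of $(P_{\alpha_s})$ would, combined with minimality and the local saddle-node picture, produce $\mathcal{P}^{\circ}$-solutions for $\alpha$ slightly larger than $\alpha_s$; local uniqueness (i)(c) and the stability dichotomy (i)(d) are immediate from the IFT along $\mathcal{C}_1\setminus\{(\alpha_s,u_{1,\alpha_s})\}$ with $\gamma_1>0$, while the uniqueness of the stable solution among all $\mathcal{P}^{\circ}$-solutions follows from a Picone-type argument. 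For (ii), Proposition~\ref{lbuN}(ii)(b) supplies $\mathcal{C}_2$ near $0^{+}$; the strict ordering $u_{2,\alpha}-u_{1,\alpha}\in\mathcal{P}^{\circ}$ for small $\alpha$ follows from $\min u_{2,\alpha}\to\infty$ versus $u_{1,\alpha}\to u_{\mathcal{N}}$ bounded, combined with the Hopf lemma applied to the difference; instability along $\mathcal{C}_2$ is then automatic since the minimal solution $u_{1,\alpha}$ is already the unique stable one.

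For (iv), let $\mathcal{C}_*$ denote the connected component of the solution set of $(P_\alpha)$ in $[0,\alpha_s]\times\mathcal{P}^{\circ}$ containing $(0,u_{\mathcal{N}})$. Parts (i)--(iii) guarantee $\mathcal{C}_1\cup\mathcal{C}_3\subset\mathcal{C}_*$; continuation of the upper branch of $\mathcal{C}_3$ (i.e., $t>0$), together with the exactness of the solution pair $\{u_{1,\alpha},u_{2,\alpha}\}$ in $\mathcal{P}^{\circ}$ for small $\alpha$ (Proposition~\ref{lbuN}(ii)), then forces $\mathcal{C}_2\subset\mathcal{C}_*$. The \textit{a priori} estimates of \cite[Proposition 3.2]{KRQU2019} rule out accumulation of $\mathcal{C}_*$ at any $(\alpha_0,\infty)$ with $\alpha_0>0$; meanwhile the extra hypothesis $a\ge 0$, $a\not\equiv 0$ on $D$ with $|\partial D\cap\partial\Omega|>0$ is used, via a positive subsolution on $D$ built from a principal Steklov-type eigenfunction on $D$, to prevent any branch of $\mathcal{C}_*$ from collapsing to the boundary of $\mathcal{P}^{\circ}$. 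The only admissible limit points of $\mathcal{C}_*$ in $\{(\alpha,0),(\alpha,\infty),(0,u)\}$ are thus $(0,u_{\mathcal{N}})$ and $(0,\infty)$, yielding \eqref{compoCast}. The main obstacles will be (a) the delicate test-function computation establishing the non-vanishing of $\int_{\Omega} a\,u_{1,\alpha_\ast}^{q-2}\phi_1^{3}$ (which upgrades $\alpha''(0)\le 0$ to $\alpha''(0)<0$), and (b) the construction of the appropriate subsolution on $D$ in (iv) that keeps $\mathcal{C}_*$ strictly inside $\mathcal{P}^{\circ}$.
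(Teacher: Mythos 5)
Your overall architecture (IFT continuation of the local branch, Crandall--Rabinowitz at the turning point with the sign computation for $\alpha''(0)$, a connectedness argument for $\mathcal{C}_*$) matches the paper's, and your treatment of the bending identity is essentially right: the paper indeed proves $\int_\Omega a u_0^{q-2}\phi_1^3<0$ by testing $(P_{\alpha_0})$ and the linearized problem with $\phi_1^3/u_0^2$ and $\phi_1^2/u_0$, obtaining $(1-q)\int_\Omega a u_0^{q-2}\phi_1^3=-2\int_\Omega \frac{\phi_1}{u_0^3}|u_0\nabla\phi_1-\phi_1\nabla u_0|^2<0$. However, there are genuine gaps in the logical core. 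First, you derive the instability of $u_{2,\alpha}$ from the uniqueness of the stable solution, and you attribute the latter to ``a Picone-type argument.'' For an indefinite weight this concavity/Picone mechanism fails: $s\mapsto a(x)s^q$ is convex where $a<0$, so the standard comparison between ordered stable solutions breaks down. The paper's logic runs in the opposite direction: it first proves \emph{directly} that $u_{2,\alpha}$ is unstable for small $\alpha$ (testing the linearized problem with $1/\phi_1$ and using $\alpha^{1/(1-q)}u_{2,\alpha}\to c_a$ to get $\limsup\gamma_1/\alpha<0$, Proposition \ref{monoton}(ii)), and only then proves uniqueness of the weakly stable solution by an IFT continuation of any putative stable competitor down to $\alpha\to0^+$, where the exact two-solution structure forces it onto $\mathcal{C}_2$ --- a contradiction (Proposition \ref{unstable}). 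Your order of deduction is circular without an independent proof of one of the two facts.

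Second, your arguments for $\alpha_*=\alpha_s$, for uniqueness at $\alpha_s$, and for the connectedness in (iv) do not close. The claim that ``continuity forces $\alpha(t)\le\alpha_*$'' presupposes $\alpha_*=\alpha_s$, which is exactly what is to be shown; and ``a second solution of $(P_{\alpha_s})$ would produce solutions for $\alpha$ slightly larger than $\alpha_s$'' is not a valid deduction --- a non-minimal solution far from the turning point need not generate anything beyond $\alpha_s$. The paper instead proves uniqueness at $\alpha_*$ (and nonexistence beyond $\alpha_*$) via the sub/supersolution method in the fixed-point formulation of the Appendix, splitting into the cases $v_0\ge u_*$ and $v_0\not\ge u_*$ and, in the second case, using $v_0\wedge u_*$ as a supersolution and $u_{\mathcal{N}}$ as a subsolution to manufacture a weakly stable solution distinct from $u_*$, contradicting Proposition \ref{unstable}. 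Finally, for (iv), local continuation of the upper branch of $\mathcal{C}_3$ does not show it reaches $\mathcal{C}_2$; the paper imports a global subcontinuum of $(R_\alpha)$ joining $(0,0)$ and $(0,c_a)$ from \cite[Theorem 4.4]{KRQU2019} (this is where the hypothesis on $D$ enters), converts it by $u=\alpha^{-1/(1-q)}w$, and then shows the resulting connected set lies entirely in $\mathcal{P}^\circ$ by an open--closed argument based on the lower bound $u\ge u_{\mathcal{N}}$ of Proposition \ref{lbuN}(i). These three steps need to be supplied before the proposal constitutes a proof.
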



The rest of this subsection is devoted to the proof of the above theorem. In
order to prove it, we start with two further results on $u_{2,\alpha}$.

\begin{prop}
\label{monoton}Assume $\left(  A.0\right)  $. Then:

\begin{enumerate}
\item $u_{2,\alpha}>u_{2,\beta}$ on $\overline{\Omega}$ if $0<\alpha<\beta$
are small enough.

\item $u_{2,\alpha}$ is unstable if $\alpha>0$ is small enough.
\end{enumerate}
\end{prop}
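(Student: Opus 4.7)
The plan rests entirely on the explicit parametrization of $u_{2,\alpha}$ supplied by Proposition \ref{lbuN}(ii)(b): one has $u_{2,\alpha} = \alpha^{-\frac{1}{1-q}} w(\alpha)$, where $\alpha \mapsto w(\alpha)$ is $C^{\infty}$ from a neighborhood of $0$ into $W^{2,s}(\Omega)$ (with $s > N$), solves $(R_\alpha)$, and satisfies $w(0) = c_{a} > 0$. By the Sobolev embedding $W^{2,s}(\Omega) \hookrightarrow C^{1}(\overline{\Omega})$, we have $w(\alpha) \to c_{a}$ in $C^{1}(\overline{\Omega})$ as $\alpha \to 0^{+}$; in particular $w(\alpha) \geq c_{a}/2$ uniformly on $\overline{\Omega}$ for $\alpha$ small enough, so $w(\alpha)^{q-1}$ is well defined and bounded.

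For (i), I would differentiate the parametrization and factor out $\alpha^{-\frac{1}{1-q}-1}$ to obtain
\[
\frac{d u_{2,\alpha}}{d\alpha} = \alpha^{-\frac{1}{1-q}-1} \left[ -\frac{w(\alpha)}{1-q} + \alpha\, w'(\alpha) \right].
\]
Since $w'(\alpha)$ stays bounded in $C(\overline{\Omega})$ while $w(\alpha) \to c_{a}$ uniformly, the bracket converges uniformly on $\overline{\Omega}$ to $-c_{a}/(1-q) < 0$. Hence $\frac{d u_{2,\alpha}}{d\alpha}(x) < 0$ for every $x \in \overline{\Omega}$ and every sufficiently small $\alpha > 0$; integrating from $\alpha$ to $\beta$ yields $u_{2,\alpha} > u_{2,\beta}$ on $\overline{\Omega}$, which is (i).

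For (ii), I would estimate $\gamma_{1}(\alpha,u_{2,\alpha})$ from above via the Rayleigh quotient
\[
\gamma_{1}(\alpha, u_{2,\alpha}) = \inf_{\phi \in H^{1}(\Omega) \setminus \{0\}} \frac{\int_{\Omega} |\nabla \phi|^{2} - q\int_{\Omega} a\, u_{2,\alpha}^{q-1} \phi^{2} - \alpha \int_{\partial\Omega} \phi^{2}}{\int_{\Omega} \phi^{2}},
\]
tested against $\phi \equiv 1$. Noting $-\frac{q-1}{1-q} = 1$, we have $u_{2,\alpha}^{q-1} = \alpha\, w(\alpha)^{q-1}$, and therefore
\[
\gamma_{1}(\alpha, u_{2,\alpha}) \leq \frac{\alpha}{|\Omega|}\left[ -q \int_{\Omega} a\, w(\alpha)^{q-1} - |\partial\Omega| \right].
\]
By the uniform convergence $w(\alpha)^{q-1} \to c_{a}^{q-1}$ and the identity $c_{a}^{1-q} = -\int_{\Omega} a / |\partial\Omega|$ from \eqref{def:ca}, the bracket tends to $-q c_{a}^{q-1} \int_{\Omega} a - |\partial\Omega| = q|\partial\Omega| - |\partial\Omega| = (q-1)|\partial\Omega| < 0$. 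Thus $\gamma_{1}(\alpha, u_{2,\alpha}) < 0$ for $\alpha > 0$ small, proving (ii).

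There is no deep obstacle here: both parts reduce to uniform control of $w(\alpha)$ (and of its $\alpha$-derivative) near $\alpha = 0$, which is delivered for free by the $C^{\infty}$ regularity in $W^{2,s}(\Omega)$ invoked above. The mildest point to watch is ensuring $w(\alpha)^{q-1}$ makes sense, but positivity of $w$ near $\alpha=0$ is automatic from $w(0) = c_{a} > 0$ together with continuity.
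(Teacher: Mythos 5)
Your proposal is correct and follows essentially the same route as the paper: part (i) is the paper's argument verbatim (differentiate $u_{2,\alpha}=\alpha^{-\frac{1}{1-q}}w(\alpha)$, observe that the rescaled derivative converges uniformly to $-c_{a}/(1-q)<0$). For part (ii) the paper tests the eigenvalue equation against $1/\phi_{1}$ to get an identity, whereas you test the Rayleigh quotient at $\phi\equiv 1$; these are equivalent devices, both reducing to the scaling $u_{2,\alpha}^{q-1}=\alpha\,w(\alpha)^{q-1}$ and the identity $c_{a}^{q-1}\int_{\Omega}a=-|\partial\Omega|$, so the conclusion $\gamma_{1}(\alpha,u_{2,\alpha})\leq \frac{\alpha}{|\Omega|}\left[(q-1)|\partial\Omega|+o(1)\right]<0$ is reached the same way.
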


\noindent\textit{Proof}. \strut

\begin{enumerate}
\item Let $w\left(  \alpha\right)  $ be as in the proof of Proposition
\ref{lbuN}(ii). Since $u_{2,\alpha}=\alpha^{-\frac{1}{1-q}}w(\alpha)$,
differentiating $u_{2,\alpha}$ with respect to $\alpha$ provides
\[
\frac{du_{2,\alpha}}{d\alpha}=-\frac{1}{1-q}\alpha^{-\frac{1}{1-q}-1}%
w(\alpha)+\alpha^{-\frac{1}{1-q}}w^{\prime}(\alpha).
\]
Set
\begin{equation}
\eta(\alpha):=\alpha^{\frac{2-q}{1-q}}\frac{du_{2,\alpha}}{d\alpha}=-\frac
{1}{1-q}w(\alpha)+\alpha w^{\prime}(\alpha). \label{eta}%
\end{equation}
Recalling that $w\left(  0\right)  =c_{a}$ we see that%
\begin{equation}
\eta(0)=-\frac{1}{1-q}w(0)=-\frac{c_{a}}{1-q}<0. \label{eta0}%
\end{equation}
Moreover, we know that $w(\alpha)\rightarrow w(0)$ in $C(\overline{\Omega})$
as $\alpha\rightarrow0^{+}$, and also that $\Vert w^{\prime}(\alpha
)\Vert_{C(\overline{\Omega})}$ is bounded as $\alpha\rightarrow0^{+}$, since
$w^{\prime}(\alpha)\rightarrow w^{\prime}(0)$ in $C(\overline{\Omega})$ as
$\alpha\rightarrow0^{+}$. Hence, we deduce from \eqref{eta} and \eqref{eta0}
that
\[
\eta(\alpha)\rightarrow\eta(0)=-\frac{c_{a}}{1-q}<0\quad\mbox{in}\ C(\overline
{\Omega})\ \mbox{as}\ \alpha\rightarrow0^{+},
\]
and then,
\[
\alpha^{\frac{2-q}{1-q}}\frac{du_{2,\alpha}}{d\alpha}\longrightarrow
-\frac{c_{a}}{1-q}\quad\mbox{ in}\ C(\overline{\Omega})\ \mbox{ as }\ \alpha
\rightarrow0^{+}.
\]
In particular, there exists $\underline{\alpha}>0$ such that
\[
\frac{du_{2,\alpha}}{d\alpha}<-\frac{c_{a}}{2(1-q)}\alpha^{-\frac{2-q}{1-q}%
}\quad\mbox{ on }\ \overline{\Omega}\ \mbox{ for }\ 0<\alpha<\underline
{\alpha},
\]
which
yields the conclusion.

\item As mentioned in the proof of Proposition \ref{lbuN}(ii),
\begin{equation}
\alpha^{\frac{1}{1-q}}u_{2,\alpha}=w\left(  \alpha\right)  \rightarrow
c_{a}>0\quad\mbox{in}\quad C^{1}(\overline{\Omega}),\quad\mbox{as}\quad
\alpha\rightarrow0^{+}. \label{convca}%
\end{equation}
Taking $\frac{1}{\phi_{1}}$ as test function in \eqref{ep} we see that
\[
0>\int_{\Omega}\nabla\phi_{1}\nabla\left(  \frac{1}{\phi_{1}}\right)
=\alpha|\partial\Omega|+\gamma_{1}|\Omega|+q\int_{\Omega}au_{2,\alpha}^{q-1},
\]
so that
\[
-\frac{\gamma_{1}|\Omega|}{\alpha}>q\int_{\Omega}a\left(  \alpha^{\frac
{1}{1-q}}u_{2,\alpha}\right)  ^{q-1}+|\partial\Omega|,
\]
and \eqref{convca} provides
\[
\limsup_{\alpha\rightarrow0^{+}}\frac{\gamma_{1}}{\alpha}\leq-\frac
{(1-q)\left\vert \partial\Omega\right\vert }{|\Omega|}<0.
\]
The desired conclusion follows. \qed \newline
\end{enumerate}

We shall employ the following result to study the solutions set of
$(P_{\alpha})$ in a neighborhood of $(\alpha_{\ast},u_{1,\alpha_{\ast}})$,
where $\alpha_{\ast}$ is as in Proposition
\ref{lbuN}(ii-a).

\begin{prop}
\label{prop:bend}

Let $u_{0}\in\mathcal{P}^{\circ}$ be a solution of $(P_{\alpha_{0}})$ for
$\alpha_{0}>0$
such that $\gamma_{1}(\alpha_{0},u_{0})=0$. Then, in a neighborhood of
$(\alpha_{0},u_{0})$ the solutions set of $(P_{\alpha})$ is given exactly by a
$C^{\infty}$ curve $(\alpha,u)=(\alpha(t),u(t))$, parametrized by $t\in
(-t_{0},t_{0})$ for some $t_{0}>0$, and such that $(\alpha(0),u(0))=(\alpha
_{0},u_{0})$. Moreover:

\begin{enumerate}
\item $(\alpha(t),u(t))=(\alpha_{0} + \beta(t), u_{0} + t\phi_{1} + z(t))$
with $\phi_{1}=\phi_{1}(\alpha_{0},u_{0})$ and some $C^{\infty}$ functions
$\beta(\cdot), z(\cdot)$, satisfying $\beta(0)=\beta^{\prime}(0)=0$ and
$z(0)=z^{\prime}(0)=0$
\textrm{(}implying $u^{\prime}(0)=\phi_{1}\in\mathcal{P}^{\circ}$\textrm{)};

\item $\beta^{\prime\prime}(0)<0$, i.e., the curve $(\alpha(t),u(t))$ bends to
the left in a neighborhood of $(\alpha_{0},u_{0})$, see Figure
\ref{fig19_0422compo}(i);

\item $u(t)$ is asymptotically stable for $t<0$ and unstable for $t>0$.
\end{enumerate}
\end{prop}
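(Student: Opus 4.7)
The plan is to apply a Crandall--Rabinowitz-type saddle-node (fold) bifurcation argument, combined with a Picone-type identity to pin down the sign of $\beta''(0)$.

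First, I would set $X := C^{2+r}(\overline{\Omega})$, $Y := C^{r}(\overline{\Omega}) \times C^{1+r}(\partial\Omega)$, and work with
$$F(\alpha, u) := \bigl(-\Delta u - a(x)\, u^q,\ \partial_\nu u - \alpha u|_{\partial\Omega}\bigr).$$
Since $u_0 \in \mathcal{P}^{\circ}$ (so $u_0 > 0$ on $\overline{\Omega}$), $F$ is $C^{\infty}$ near $(\alpha_0, u_0)$, and zeros of $F$ correspond exactly to solutions of $(P_\alpha)$. The derivative $L := D_u F(\alpha_0,u_0)$ sends $v$ to $(\mathcal{L}(\alpha_0, u_0) v,\, \mathcal{B}(\alpha_0, u_0) v)$, so the hypothesis $\gamma_1(\alpha_0, u_0) = 0$ together with simplicity of $\gamma_1$ gives $\ker L = \mathrm{span}\{\phi_1\}$ with $\phi_1 \in \mathcal{P}^{\circ}$. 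Elliptic Fredholm theory yields $\mathrm{ind}\, L = 0$, and a Green's-identity computation using the equations satisfied by $\phi_1$ shows
$$\mathrm{Range}(L) = \Bigl\{(f,g) \in Y \,:\, \int_\Omega f\phi_1 + \int_{\partial\Omega} g\phi_1 = 0\Bigr\}.$$
The transversality condition follows from $D_\alpha F(\alpha_0, u_0) = (0, -u_0|_{\partial\Omega})$ together with $\int_{\partial\Omega} u_0 \phi_1 > 0$. Crandall--Rabinowitz (or a direct Lyapunov--Schmidt reduction decomposing $u = u_0 + t\phi_1 + z$ with $z$ in a fixed complement of $\ker L$) then produces a $C^{\infty}$ curve $(\alpha(t), u(t)) = (\alpha_0 + \beta(t),\, u_0 + t\phi_1 + z(t))$ for $t \in (-t_0, t_0)$ exhausting the zero set of $F$ near $(\alpha_0, u_0)$, with $\beta(0) = 0$, $z(0) = z'(0) = 0$. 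Differentiating $F(\alpha(t), u(t)) = 0$ once at $t = 0$ forces $\beta'(0) = 0$ (otherwise $D_\alpha F \in \mathrm{Range}(L)$), whence $u'(0) = \phi_1 \in \mathcal{P}^{\circ}$, giving (i).

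For (ii), differentiating $F(\alpha(t), u(t)) = 0$ twice at $t = 0$ and projecting onto the cokernel direction $\phi_1$ yields
$$\beta''(0) \int_{\partial\Omega} u_0 \phi_1 \;=\; q(1-q) \int_\Omega a\, u_0^{q-2} \phi_1^3.$$
I expect the main obstacle to be showing that the right-hand integral is strictly negative. My plan is to test $-\Delta u_0 = a u_0^q$ against $\phi_1^3/u_0^2$ and $-\Delta\phi_1 = q a u_0^{q-1}\phi_1$ against $\phi_1^2/u_0$. In both identities the boundary contributions collapse to the same $\alpha_0 \int_{\partial\Omega}\phi_1^3/u_0$, because $u_0$ and $\phi_1$ obey the same Robin condition at $\alpha_0$; subtracting the two identities cancels this common term and reorganizes the cross terms into a perfect square, producing the Picone-type identity
$$(q-1) \int_\Omega a\, u_0^{q-2} \phi_1^3 \;=\; 2 \int_\Omega \phi_1\, u_0\, \Bigl|\nabla\!\Bigl(\tfrac{\phi_1}{u_0}\Bigr)\Bigr|^2 \;\geq\; 0.$$
Since $q < 1$, this forces $\int_\Omega a u_0^{q-2}\phi_1^3 \leq 0$; equality would force $\phi_1/u_0$ to be a positive constant, and reinserting $\phi_1 \equiv c u_0$ into both the eigenvalue equation for $\phi_1$ and the equation for $u_0$ gives $q = 1$, a contradiction. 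Hence the integral is strictly negative and $\beta''(0) < 0$.

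For (iii), differentiate the eigenvalue problem $\mathcal{L}(\alpha(t), u(t)) \phi(t) = \gamma_1(t) \phi(t)$ along the curve. Because $\dot\alpha(0) = 0$, the perturbed eigenfunction $\dot\phi(0)$ satisfies the same Robin condition as $\phi_1$, so self-adjointness of $\mathcal{L}(\alpha_0, u_0)$ makes the cross term vanish in the standard eigenvalue perturbation formula, yielding
$$\dot\gamma_1(0) \int_\Omega \phi_1^2 \;=\; -q(q-1) \int_\Omega a\, u_0^{q-2} \phi_1^3 \;=\; q(1-q) \int_\Omega a\, u_0^{q-2} \phi_1^3 \;<\; 0.$$
Hence $\gamma_1$ strictly decreases through $0$ at $t = 0$: $\gamma_1(\alpha(t), u(t)) > 0$ for $t \in (-t_0, 0)$ (asymptotic stability) and $\gamma_1(\alpha(t), u(t)) < 0$ for $t \in (0, t_0)$ (instability), completing the proof.
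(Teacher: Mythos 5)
Your proposal is correct, and for parts (i) and (ii) it is essentially the paper's proof: the same map $F(\alpha,u)=(-\Delta u-au^{q},\,\partial_{\nu}u-\alpha u)$ between H\"older spaces, the same verification that $F_{\alpha}(\alpha_{0},u_{0})=(0,-u_{0})\notin\mathrm{Im}(F_{u}(\alpha_{0},u_{0}))$ via Green's formula and $\int_{\partial\Omega}u_{0}\phi_{1}>0$, the same Crandall--Rabinowitz fold theorem, the same second-derivative formula $\beta''(0)\int_{\partial\Omega}u_{0}\phi_{1}=q(1-q)\int_{\Omega}au_{0}^{q-2}\phi_{1}^{3}$, and the same pair of test functions $\phi_{1}^{3}/u_{0}^{2}$, $\phi_{1}^{2}/u_{0}$ producing the Picone-type identity (your $2\int\phi_{1}u_{0}|\nabla(\phi_{1}/u_{0})|^{2}$ is literally the paper's $2\int\phi_{1}u_{0}^{-3}|u_{0}\nabla\phi_{1}-\phi_{1}\nabla u_{0}|^{2}$). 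You are in fact slightly more careful than the paper on the strictness: you rule out the degenerate case $\phi_{1}\equiv cu_{0}$ by feeding it back into both equations and contradicting $q<1$, whereas the paper asserts the strict sign directly. The only genuine divergence is part (iii): the paper invokes the abstract linearized-stability result \cite[Theorem 3.6]{CR73}, obtaining $\gamma(t)/\alpha'(t)\to\int_{\partial\Omega}u_{0}\phi_{1}/\int_{\Omega}\phi_{1}^{2}>0$ and then combining with $\alpha''(0)<0$; you instead differentiate the eigenvalue problem along the curve and compute $\dot\gamma_{1}(0)\int_{\Omega}\phi_{1}^{2}=q(1-q)\int_{\Omega}au_{0}^{q-2}\phi_{1}^{3}<0$ directly. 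Both are valid; your route is more self-contained and reuses the integral already computed in (ii), at the price of having to justify the $C^{1}$ dependence of the simple eigenvalue $\gamma_{1}(t)$ and its eigenfunction on $t$ (standard perturbation theory, and the paper itself records this regularity in the paragraph preceding the theorem), while the paper's route outsources exactly that regularity to Crandall--Rabinowitz.
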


\noindent\textit{Proof}. We apply \cite[Theorems 3.2 and 3.6]{CR73} to prove
this proposition.

\begin{enumerate}
\item For some suitable $r=r(q,\theta)\in(0,1)$ we consider the mapping
\[
F\colon U_{0}\longrightarrow C^{r}(\overline{\Omega})\times C^{1+r}%
(\partial\Omega);\ (\alpha,u)\longmapsto(-\Delta u-au^{q},\ \partial_{\nu
}u-\alpha u),
\]
where $U_{0}$ is an open neighborhood of $(\alpha_{0},u_{0})$ in
$\mathbb{R}\times C^{2+r}(\overline{\Omega})$ such that $\alpha>0$ and
$u\in\mathcal{P}^{\circ}$ for $(\alpha,u)\in U_{0}$. We see that for a given
$(\alpha,u)\in U_{0}$, $u\in\mathcal{P}^{\circ}$ solves $(P_{\alpha})$ if and
only if $F(\alpha,u)=\left(  0,0\right)  $. The Fr\'{e}chet derivatives
$F_{\alpha}$ and $F_{u}$ are given by
\[
F_{\alpha}(\alpha_{0},u_{0})=(0,-u_{0}),\quad F_{u}(\alpha,u)\phi
=(\mathcal{L}(\alpha,u)\phi,\mathcal{B}(\alpha,u)\phi),
\]
where we recall that $\mathcal{L}$ and $\mathcal{B}$ are given by \eqref{ep}.
Since $\gamma_{1}(\alpha_{0},u_{0})=0$, we deduce that $\mathrm{Ker}%
(F_{u}(\alpha_{0},u_{0}))=\mathrm{span}\,\{\phi_{1}\}$, where $\phi_{1}%
=\phi_{1}(\alpha_{0},u_{0})\in\mathcal{P}^{\circ}$. To verify that $F_{\alpha
}(\alpha_{0},u_{0})\not \in \mathrm{Im}(F_{u}(\alpha_{0},u_{0}))$, we shall
check that the problem
\[%
\begin{cases}
\mathcal{L}(\alpha_{0},u_{0})v=0 & \mbox{in $\Omega$},\\
\mathcal{B}(\alpha_{0},u_{0})v=-u_{0} & \mbox{on $\partial \Omega$}
\end{cases}
\]
has no solution. Indeed, if $v$ solves this problem then
\begin{equation}
0=\int_{\Omega}(\phi_{1}\mathcal{L}v-v\mathcal{L}\phi_{1})=\int_{\partial
\Omega}(-\phi_{1}\mathcal{B}v+v\mathcal{B}\phi_{1})=\int_{\partial\Omega}%
u_{0}\phi_{1}>0, \label{LBgr}%
\end{equation}
a contradiction. By \cite[Theorem 3.2]{CR73} we infer that, in a neighborhood
of $(\alpha_{0},u_{0})$, the solutions of $F(\alpha,u)=\left(  0,0\right)  $
are given exactly by the curve
\begin{equation}
\{(\alpha(t),u(t))=(\alpha_{0}+\beta(t),u_{0}+t\phi_{1}+z(t)):-t_{0}<t<t_{0}\}
\label{curve}%
\end{equation}
for some $\beta(\cdot),z(\cdot)$ which are $C^{\infty}$ and satisfy
$\beta(0)=\beta^{\prime}(0)=0$, and $z(0)=z^{\prime}(0)=0$. Assertion (i) has
been verified.

\item Based on the regularity assertion for the curve, we shall differentiate
$F(\alpha(t),u(t))=\left(  0,0\right)  $ with respect to $t$ twice at $t=0$.
From \eqref{curve} we see that $\alpha^{\prime}\left(  0\right)  =0$ and
$u^{\prime}\left(  0\right)  =\phi_{1}$. Thus, after some computations we find
that
\[%
\begin{cases}
\mathcal{L}(\alpha_{0},u_{0})u^{\prime\prime}(0)=aq(q-1)u_{0}^{q-2}\phi
_{1}^{2} & \mbox{in $\Omega$},\\
\mathcal{B}(\alpha_{0},u_{0})u^{\prime\prime}(0)=\alpha^{\prime\prime}(0)u_{0}
& \mbox{on $\partial \Omega$}.
\end{cases}
\]
We now use Green's formula to deduce that
\begin{align*}
&  \int_{\Omega}(\phi_{1}\mathcal{L}(\alpha_{0},u_{0})u^{\prime\prime
}(0)-u^{\prime\prime}(0)\mathcal{L}(\alpha_{0},u_{0})\phi_{1})\\
&  =\int_{\partial\Omega}(u^{\prime\prime}(0)\mathcal{B}(\alpha_{0},u_{0}%
)\phi_{1}-\phi_{1}\mathcal{B}(\alpha_{0},u_{0})u^{\prime\prime}(0)).
\end{align*}
Since $\mathcal{L}(\alpha_{0},u_{0})\phi_{1}=0$ and $\mathcal{B}(\alpha
_{0},u_{0})\phi_{1}=0$, we find that
\begin{align*}
\int_{\Omega}(\phi_{1}\mathcal{L}(\alpha_{0},u_{0})u^{\prime\prime
}(0)-u^{\prime\prime}(0)\mathcal{L}(\alpha_{0},u_{0})\phi_{1})  &
=q(q-1)\int_{\Omega}au_{0}^{q-2}\phi_{1}^{3},\\
\int_{\partial\Omega}(u^{\prime\prime}(0)\mathcal{B}(\alpha_{0},u_{0})\phi
_{1}-\phi_{1}\mathcal{B}(\alpha_{0},u_{0})u^{\prime\prime}(0))  &
=-\alpha^{\prime\prime}(0)\int_{\partial\Omega}u_{0}\phi_{1}.
\end{align*}
Hence, we deduce that
\[
\beta^{\prime\prime}(0)=\alpha^{\prime\prime}(0)=\frac{q(1-q)\int_{\Omega
}au_{0}^{q-2}\phi_{1}^{3}}{\int_{\partial\Omega}u_{0}\phi_{1}}%
<0\ \Longleftrightarrow\ \int_{\Omega}au_{0}^{q-2}\phi_{1}^{3}<0.
\]
Let us show that $\int_{\Omega}au_{0}^{q-2}\phi_{1}^{3}<0$. Employing
$\frac{\phi_{1}^{3}}{u_{0}^{2}}$ and $\frac{\phi_{1}^{2}}{u_{0}}$ as test
functions in the weak forms of $\left(  P_{\alpha_{0}}\right)  $ and
\eqref{ep}, respectively, and recalling that $\gamma_{1}(\alpha_{0},u_{0})=0$,
we infer that%
\begin{align*}
(1-q)\int_{\Omega}au_{0}^{q-2}\phi_{1}^{3}  &  =\int_{\Omega}\nabla
u_{0}\nabla\left(  \frac{\phi_{1}^{3}}{u_{0}^{2}}\right)  -\nabla\phi
_{1}\nabla\left(  \frac{\phi_{1}^{2}}{u_{0}}\right) \\
&  =-2\int_{\Omega}\frac{\phi_{1}}{u_{0}^{3}}\left\vert u_{0}\nabla\phi
_{1}-\phi_{1}\nabla u_{0}\right\vert ^{2}<0,
\end{align*}
as desired. Assertion (ii) has been verified.

\item This assertion is a direct consequence of the previous one, using
\cite[Theorem 3.6]{CR73}. Indeed, if we define $Kw:=(w,0)$, we can check in
the same way as \eqref{LBgr} that $K\phi_{1}\not \in \mathrm{Im}(F_{u}%
(\alpha_{0},u_{0}))$, i.e., $0$ is a $K$-simple eigenvalue of $F_{u}%
(\alpha_{0},u_{0})$ in the sense of \cite[Definition 1.2]{CR73}. Put
$\gamma(t):=\gamma_{1}(\alpha(t),u(t))$ and $\phi(t):=\phi_{1}(\alpha
(t),u(t))$ to get
\[
F_{u}(\alpha(t),u(t))\phi(t)=\gamma(t)K\phi(t)\Longleftrightarrow\left\{
\begin{array}
[c]{ll}%
\mathcal{L}(\alpha(t),u(t))\phi(t)=\gamma(t)\phi(t) & \mbox{in $\Omega$},\\
\mathcal{B}(\alpha(t),u(t))\phi(t)=0 & \mbox{on $\partial \Omega$},
\end{array}
\right.
\]
see $\cite[(3.5)]{CR73}$. Lastly, using $\phi_{1}$, we set $l$ as the
continuous functional on $C^{r}(\overline{\Omega})\times C^{1+r}%
(\partial\Omega)$ given by
\[
\langle l,(f,g)\rangle:=\int_{\Omega}f\phi_{1}+\int_{\partial\Omega}g\phi
_{1},
\]
which satisfies $\mathrm{Ker}(l)=\mathrm{Im}(F_{u}(\alpha_{0},u_{0}))$ (note
that $\langle l,K\phi_{1}\rangle=\int_{\Omega}\phi_{1}^{2}\neq0$). We are now
ready to apply \cite[Theorem 3.6]{CR73} to obtain
\[
\lim_{t\rightarrow0}\frac{\gamma(t)}{\alpha^{\prime}(t)}=-\frac{\langle
l,F_{\alpha}(\alpha_{0},u_{0})\rangle}{\langle l,K\phi_{1}\rangle}
=\frac{\int_{\partial\Omega}u_{0}\phi_{1}}{\int_{\Omega}\phi_{1}^{2}}>0.
\]
This implies that $\gamma(t)$ and $\alpha^{\prime}(t)$ have the same sign when
$t$ is close to $0$, which combined with assertion (ii) provides the desired
conclusion. \qed\newline
\end{enumerate}



Now, with the aid of Proposition \ref{monoton}(ii), we have the following
instability result.

\begin{prop}
\label{unstable}Assume $\left(  A.0\right)  $, $\left(  A.1\right)  $ and
$q\in\mathcal{I}_{\mathcal{N}}$. Then $u_{1,\alpha}$ is the only
weakly stable solution of $(P_{\alpha})$ in $\mathcal{P}^{\circ}$ for
$\alpha\in(0,\alpha_{\ast}]$.
\end{prop}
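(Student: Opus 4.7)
The plan is to argue by contradiction using a minimum-argument on the parameter set where a second weakly stable solution exists, combined with the local continuation results at $\alpha=0$ (Proposition \ref{lbuN}) and the bending description at critical points (Proposition \ref{prop:bend}). Suppose there exists $\alpha \in (0,\alpha_\ast]$ admitting a weakly stable solution $v \neq u_{1,\alpha}$ of $(P_\alpha)$ in $\mathcal{P}^\circ$, and let $\underline{\alpha}$ be the infimum of such parameters.

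First I would rule out that $\underline{\alpha} = 0$. By Proposition \ref{lbuN}(ii)(c), for $\alpha > 0$ sufficiently small the only solutions of $(P_\alpha)$ in $\mathcal{P}^\circ$ are $u_{1,\alpha}$ and $u_{2,\alpha}$, while Proposition \ref{monoton}(ii) tells us that $u_{2,\alpha}$ is unstable; thus any putative weakly stable solution distinct from $u_{1,\alpha}$ cannot exist near $\alpha = 0$, forcing $\underline{\alpha} > 0$.

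Next I would take a sequence $\alpha_n \to \underline{\alpha}$ with corresponding weakly stable solutions $v_n \neq u_{1,\alpha_n}$. Proposition \ref{lbuN}(i) gives the lower bound $v_n \geq u_{\mathcal{N}}$, while the a priori upper bound \cite[Proposition 3.2]{KRQU2019} on a compact interval of $\alpha$ yields a uniform $C(\overline{\Omega})$ bound; elliptic regularity then provides, along a subsequence, a limit $v_0 \in \mathcal{P}^\circ$ which solves $(P_{\underline{\alpha}})$ and satisfies $\gamma_1(\underline{\alpha}, v_0) \geq 0$ by continuity of $\gamma_1$. The crucial check is $v_0 \neq u_{1,\underline{\alpha}}$: if $\underline{\alpha} < \alpha_\ast$, then $\gamma_1(\underline{\alpha}, u_{1,\underline{\alpha}}) > 0$ and the IFT would force $v_n = u_{1,\alpha_n}$ for large $n$, a contradiction; if $\underline{\alpha} = \alpha_\ast$, then $\gamma_1(\alpha_\ast, u_{1,\alpha_\ast})=0$ and Proposition \ref{prop:bend} applied at $(\alpha_\ast, u_{1,\alpha_\ast})$ shows that the solutions in a neighborhood lie on a single curve whose lower branch coincides with $u_{1,\alpha}$ and whose upper branch is unstable, again ruling out the $v_n$.

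Finally I would continue $v_0$ to parameters $\alpha < \underline{\alpha}$ to contradict the definition of $\underline{\alpha}$. If $\gamma_1(\underline{\alpha}, v_0) > 0$, the IFT yields a $C^\infty$ branch $\alpha \mapsto v(\alpha) \neq u_{1,\alpha}$ of asymptotically stable solutions defined in a two-sided neighborhood of $\underline{\alpha}$. If instead $\gamma_1(\underline{\alpha}, v_0)=0$, Proposition \ref{prop:bend} applied at $(\underline{\alpha}, v_0)$ produces a curve bending strictly to the left, whose lower branch consists of asymptotically stable solutions distinct from $u_{1,\alpha(t)}$ (by continuity, since $v_0 \neq u_{1,\underline{\alpha}}$), with $\alpha(t) < \underline{\alpha}$ for $t \neq 0$ small. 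Either alternative supplies weakly stable solutions below $\underline{\alpha}$, contradicting its definition. The main obstacle is the case $\underline{\alpha} = \alpha_\ast$ in the identification step, where a naive IFT is unavailable and one must exploit the fine structure of Proposition \ref{prop:bend} (namely the upper branch is unstable and the lower branch reproduces $u_{1,\alpha}$) to exclude $v_0 = u_{1,\alpha_\ast}$.
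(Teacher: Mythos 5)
Your proof is correct and rests on exactly the same ingredients as the paper's argument: leftward IFT continuation of a putative weakly stable solution distinct from $u_{1,\alpha}$, Proposition \ref{prop:bend} to handle degenerate points, the lower bound $u\geq u_{\mathcal{N}}$ from Proposition \ref{lbuN}(i) to preserve positivity, and the small-$\alpha$ exactness together with the instability of $u_{2,\alpha}$ from Propositions \ref{lbuN}(ii) and \ref{monoton}(ii). The only difference is organizational: the paper continues the offending solution all the way down to $\alpha\to 0^{+}$ and contradicts the instability of $u_{2,\alpha}$ there, whereas you run a minimal-counterexample argument at the infimum of bad parameters and push one step below it; both packagings of the continuation argument are sound.
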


\noindent\textit{Proof.}
Assume
by contradiction that
for some $\alpha_{0}>0$, $u_{0}\in\mathcal{P}^{\circ}$ is a weakly stable
solution of $(P_{\alpha_{0}})$ and $u_{0}\not \in \mathcal{C}_{1}$. We remark
that $(\alpha_{0},u_{0})$ has a positive distance to the set $\{(\alpha
,u_{1,\alpha}):\alpha\in(0,\alpha_{\ast}]\}$. Then, thanks to Proposition
\ref{prop:bend}, we can assume that $u_{0}$ is asymptotically stable, i.e.,
$\gamma_{1}(\alpha_{0},u_{0})>0$, and $(\alpha_{0},u_{0})\not \in
\{(\alpha,u_{1,\alpha}):\alpha\in(0,\alpha_{\ast}]\}$.

Now, by the IFT, we obtain a solution curve $(\alpha,v(\alpha))$ with
$v(\alpha_{0})=u_{0}$, $v(\alpha)\in\mathcal{P}^{\circ}$ and $\gamma
_{1}(\alpha,v(\alpha))>0$, parametrized by $\alpha\in(\alpha_{0}-\delta
_{0},\alpha_{0}]$ for some $\delta_{0}>0$. Note that
such curve can be extended to $\alpha\in(0,\alpha_{0}]$. Indeed, otherwise
there exists some $\underline{\alpha}\in\left(  0,\alpha_{0}\right)  $ such
that either $v(\underline{\alpha})\not \in \mathcal{P}^{\circ}$ (which
contradicts Proposition \ref{lbuN}(i)) or $\gamma_{1}\left(  \underline
{\alpha},v\left(  \underline{\alpha}\right)  \right)  =0$ (which contradicts
the exactness assertion in Proposition \ref{prop:bend}).

However,
this curve never meets $\{(\alpha,u_{1,\alpha}):\alpha\in(0,\alpha_{\ast}]\}$
(if so we reach a contradiction using the IFT). Consequently, Proposition
\ref{lbuN}(ii) implies that $v(\alpha)=u_{2,\alpha}$ for all $\alpha>0$ small.
But, by Proposition \ref{monoton}(ii), $u_{2,\alpha}$ is unstable if $\alpha$
is small enough, so we reach a contradiction. \qed\newline

\begin{prop}
\label{ast:es}Assume $\left(  A.0\right)  $, $\left(  A.1\right)  $ and
$q\in\mathcal{I}_{\mathcal{N}}$. Then $\alpha_{\ast}=\alpha_{s}$ and
$u_{1,\alpha_{s}}$ is the unique solution of $(P_{\alpha_{s}})$ in
$\mathcal{P}^{\circ}$.
\end{prop}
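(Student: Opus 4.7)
\emph{Proof plan.} The easy direction $\alpha_\ast\le\alpha_s$ holds at once because $u_{1,\alpha_\ast}\in\mathcal{P}^\circ$ solves $(P_{\alpha_\ast})$. The converse inequality and the uniqueness will rest on three ingredients: attainment of the supremum $\alpha_s$, backward continuation of a stable solution via the IFT, and the left-bending supplied by Proposition \ref{prop:bend}. First I would show that $(P_{\alpha_s})$ has a solution in $\mathcal{P}^\circ$: picking $\alpha_n\nearrow\alpha_s$ with $u_n\in\mathcal{P}^\circ$ solving $(P_{\alpha_n})$, the a priori upper bound \cite[Proposition 3.2]{KRQU2019} combined with the lower bound $u_n\ge u_\mathcal{N}$ furnished by Proposition \ref{lbuN}(i) permits passage to a $C^{2+r}$-limit $u_s\in\mathcal{P}^\circ$ solving $(P_{\alpha_s})$.

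For $\alpha_\ast=\alpha_s$, I would argue by contradiction: assume $\alpha_\ast<\alpha_s$ and pick $\alpha_1\in(\alpha_\ast,\alpha_s)$ with a solution $u_1\in\mathcal{P}^\circ$ of $(P_{\alpha_1})$. Monotone iteration in $[u_\mathcal{N},u_1]$ (a valid subsolution--supersolution pair by Proposition \ref{lbuN}(i) and $\alpha_1>0$) yields a minimal solution $\tilde u\in\mathcal{P}^\circ$ with $\gamma_1(\alpha_1,\tilde u)\ge 0$; when $\gamma_1=0$ I would first slide onto the asymptotically stable lower branch from Proposition \ref{prop:bend} to reach a nearby point with $\gamma_1>0$ at some $\alpha\in(\alpha_\ast,\alpha_1]$. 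From this point I would then extend backward in $\alpha$ by the IFT, as long as $\gamma_1>0$ and the solution stays in $\mathcal{P}^\circ$: Proposition \ref{lbuN}(i) rules out exit from $\mathcal{P}^\circ$ and the a priori bound rules out blow-up, so the maximal extension must either reach $\beta=0$ or encounter $\gamma_1=0$ at some $\beta_0\in(0,\alpha_1)$. In the first case, $u(0)$ solves $(P_0)$ in $\mathcal{P}^\circ$, hence equals $u_\mathcal{N}$ by uniqueness in $\mathcal{I}_\mathcal{N}$, and IFT uniqueness at $(0,u_\mathcal{N})$ forces the curve to coincide with $\mathcal{C}_1$; but then $\mathcal{C}_1$ would extend past $\alpha_\ast$ while keeping $\gamma_1>0$, contradicting the defining property of $\alpha_\ast$ in Proposition \ref{lbuN}(ii). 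In the second case, Proposition \ref{prop:bend} applies at $(\beta_0,u(\beta_0))$ and bends the curve left, confining nearby solutions to $\alpha\le\beta_0$; this contradicts the fact that the curve approaches $\beta_0$ from $\alpha>\beta_0$.

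For uniqueness at $\alpha_s$, let $u\in\mathcal{P}^\circ$ solve $(P_{\alpha_s})$. A nonzero $\gamma_1(\alpha_s,u)$ would let the IFT continue $u$ to solutions in $\mathcal{P}^\circ$ for $\alpha$ slightly larger than $\alpha_s$, contradicting the supremum; hence $\gamma_1(\alpha_s,u)=0$. Applying Proposition \ref{prop:bend} at $(\alpha_s,u)$ and propagating the stable lower branch backward exactly as above, the turning-point alternative is again excluded by left-bending, and the curve must reach $u_\mathcal{N}$ at $\beta=0$, thereby coinciding with $\mathcal{C}_1$ and giving $u=u_{1,\alpha_s}$. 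The principal obstacle throughout is the backward continuation, where three escape routes must be blocked simultaneously: loss of positivity (ruled out by Proposition \ref{lbuN}(i)), blow-up (ruled out by the a priori bound), and emergence of an interior turning point -- the most delicate escape, closed precisely by the left-bending of Proposition \ref{prop:bend}, which forbids the curve from re-approaching the critical $\alpha$ from above.
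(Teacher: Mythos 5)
Your backward-continuation argument for $\alpha_{\ast}=\alpha_{s}$ is essentially sound: starting from the weakly stable minimal solution produced by monotone iteration, continuing leftward while $\gamma_{1}>0$ (so that \emph{all} eigenvalues of the linearization are positive and the IFT genuinely applies), and closing the three escape routes with Proposition \ref{lbuN}(i), the a priori bound, and the exactness/left-bending of Proposition \ref{prop:bend}, is in substance a re-derivation of Proposition \ref{unstable}, which is the engine of the paper's proof as well.

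The uniqueness step at $\alpha_{s}$, however, contains a genuine gap. You claim that ``a nonzero $\gamma_{1}(\alpha_{s},u)$ would let the IFT continue $u$ to $\alpha>\alpha_{s}$, hence $\gamma_{1}(\alpha_{s},u)=0$.'' This conflates ``$\gamma_{1}\neq 0$'' with invertibility of the linearized operator: if $\gamma_{1}(\alpha_{s},u)<0$, some \emph{higher} eigenvalue $\gamma_{k}(\alpha_{s},u)$, $k\geq 2$, may vanish, in which case the IFT does not apply and no continuation past $\alpha_{s}$ is available; nor does Proposition \ref{prop:bend} apply, since it requires the \emph{principal} eigenvalue to vanish (its proof uses that the kernel is spanned by the positive eigenfunction $\phi_{1}$). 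So an unstable solution at $\alpha_{s}$ with a degenerate higher eigenvalue is not excluded by your argument. This is not a removable technicality: in the proof of Theorem \ref{ball} the authors must invoke the extra hypothesis \eqref{hip} (via Proposition \ref{prop:mu2}) precisely to rule out $\gamma_{k}=0$ for $k\geq 2$, and no such hypothesis is available in Proposition \ref{ast:es}. The paper instead proves uniqueness at $\alpha_{\ast}$ by a different device: given a putative second solution $v_{0}$, it splits into the cases $v_{0}\geq u_{1,\alpha_{\ast}}$ and $v_{0}\not\geq u_{1,\alpha_{\ast}}$, builds in each case an ordered sub/supersolution pair (using $u(t)$ from Proposition \ref{prop:bend}, respectively $u_{\mathcal{N}}$ and $v_{0}\wedge u_{1,\alpha_{\ast}}$), and extracts via \cite[Proposition 7.8]{Am76} and relation \eqref{a} a \emph{weakly stable} solution distinct from $u_{1,\alpha_{\ast}}$, contradicting Proposition \ref{unstable}. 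You would need to replace your IFT dichotomy at $(\alpha_{s},u)$ by an order-theoretic argument of this kind.
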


\noindent\textit{Proof.} First we verify that $u_{1,\alpha_{\ast}}$ is the
unique solution in $\mathcal{P}^{\circ}$ of $(P_{\alpha_{\ast}})$, using the
sub and supersolutions method \cite[Proposition 7.8]{Am76} for a fixed point
equation which $(P_{\alpha})$ is reduced to (see Appendix \ref{appen}). Assume
to the contrary that $v_{0}$ is a solution in $\mathcal{P}^{\circ}$ of
$(P_{\alpha_{\ast}})$ such that $v_{0}\not \equiv u_{\ast}:=u_{1,\alpha_{\ast
}}$. \newline

The argument is divided into two cases.

\begin{enumerate}
\item[(I)] Assume that $v_{0}\geq u_{\ast}$. In this case, using the strong
maximum principle and Hopf's lemma, we deduce $v_{0}-u_{\ast}>0$ in
$\overline{\Omega}$. So, from Proposition \ref{prop:bend} we can take a
solution $u(t)\in\mathcal{P}^{\circ}$ of $(P_{\alpha\left(  t\right)  })$,
satisfying $\alpha(t)<\alpha_{\ast}$, $u_{\ast}<u(t)<v_{0}$ in $\overline
{\Omega}$ if $t>0$ small. We see $u(t)$ is a subsolution of $(P_{\alpha_{\ast
}})$. Using
\cite[Proposition 7.8]{Am76} and taking into account (\ref{a})
in Appendix \ref{appen}, we have a weakly stable solution $u\in\mathcal{P}%
^{\circ}$ of $(P_{\alpha_{\ast}})$ such that $u(t)\leq u\leq v_{0}$ in
$\overline{\Omega}$. However, this is contradictory to Proposition
\ref{unstable}, since $u\not \equiv u_{\ast}$.

\item[(II)] Assume that $v_{0}\not \geq u_{\ast}$. Let $v_{0}\wedge u_{\ast
}:=\min(v_{0},u_{\ast})$. Observe that $v_{0}\not \geq u_{\ast}$ implies that
$v_{0}\wedge u_{\ast}<u_{\ast}$ somewhere, i.e., $v_{0}\wedge u_{\ast}\leq
u_{\ast}$ and $v_{0}\wedge u_{\ast}\not \equiv u_{\ast}$. So, $v_{0}\wedge
u_{\ast}\in\mathcal{P}^{\circ}$ is a
supersolution of $(P_{\alpha_{\ast}})$ in the sense mentioned in Appendix
\ref{appen}.
Moreover, by Proposition \ref{lbuN}(i) we have that $u_{\mathcal{N}}\leq
v_{0}\wedge u_{\ast}$.

Since $u_{\mathcal{N}}$ is a subsolution of $(P_{\alpha_{\ast}})$,
we infer that $(P_{\alpha_{\ast}})$ has a weakly stable solution
$u\in\mathcal{P}^{\circ}$ such that $u_{\mathcal{N}}\leq u\leq v_{0}\wedge
u_{\ast}$ and $u\not \equiv u_{\ast}$. But, this is impossible by Proposition
\ref{unstable}.
\end{enumerate}

The uniqueness of $u_{1,\alpha_{\ast}}$ has been verified. \newline

Now, we can prove in the same way that $(P_{\alpha})$ has no solutions in
$\mathcal{P}^{\circ}$ for any $\alpha>\alpha_{\ast}$. Indeed, if $v_{0}%
\in\mathcal{P}^{\circ}$ is a solution of $(P_{\alpha})$ for some
$\alpha>\alpha_{\ast}$, then $v_{0}\not \equiv u_{\ast}$. Moreover, the above
arguments in (I), (II) remain valid for this $v_{0}$. We have now proved that
$\alpha_{\ast}=\alpha_{s}$. \qed\newline

\begin{rem}
\label{minim}Arguing as in the proof of Proposition \ref{ast:es} we see that,
under the assumptions of the aforementioned proposition, $u_{1,\alpha}$ is
minimal among the solutions in $\mathcal{P}^{\circ}$ of $\left(  P_{\alpha
}\right)  $ for $\alpha\in\left(  0,\alpha_{s}\right]  $. Indeed, if for some
$\alpha\in\left(  0,\alpha_{s}\right]  $ there exists a solution $v_{0}%
\in\mathcal{P}^{\circ}$ of $\left(  P_{\alpha}\right)  $ with $u_{1,\alpha
}\not \leq v_{0}$, then reasoning as in (II) above, with $u_{\ast}$ replaced
by $u_{1,\alpha}$, we reach a contradiction. \smallskip
\end{rem}

Let us
call $\mathcal{C}_{1},\mathcal{C}_{2}$ and $\mathcal{C}_{3}$ the solutions
curves of $\left(  P_{\alpha}\right)  $ given by Propositions \ref{lbuN}(iia),
\ref{lbuN}(iib) and \ref{prop:bend}, respectively.

\begin{prop}
\label{prop:compo:wthoutA2}Assume $(A.0),(A.1)$, $q\in\mathcal{I}%
_{\mathcal{N}}$, and $0\not \equiv a\geq0$ in some smooth domain
$D\subset\Omega$ such that $\left\vert \partial D\cap\partial\Omega\right\vert
>0$. Then $(P_{\alpha})$ has a component $\mathcal{C}_{\ast}$ of solutions in
$[0,\alpha_{s}]\times\mathcal{P}^{\circ}$ which includes $\mathcal{C}%
_{1},\mathcal{C}_{2}$ and $\mathcal{C}_{3}$, satisfying \eqref{compoCast}.
\end{prop}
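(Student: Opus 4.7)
My plan is to obtain $\mathcal{C}_\ast$ by transferring the maximal-component result of \cite[Theorem 4.4]{KRQU2019} for $(R_\alpha)$ to $(P_\alpha)$ via the change of variables $u=\alpha^{-1/(1-q)}w$, and then gluing in the curves $\mathcal{C}_1$ and $\mathcal{C}_3$ together with their limit points at the endpoints $\alpha=0$ and $\alpha=\alpha_s$.

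First, I would note that the hypothesis $0\not\equiv a\geq 0$ on a smooth $D\subset\Omega$ with $|\partial D\cap\partial\Omega|>0$ is precisely the additional assumption under which \cite[Theorem 4.4]{KRQU2019} produces a maximal closed connected set $\widetilde{\mathcal{C}}$ of nontrivial solutions of $(R_\alpha)$ containing $\{(\alpha,w_{1,\alpha}),(\alpha,w_{2,\alpha})\}$ for all $\alpha>0$ small. Since the map $(\alpha,w)\mapsto(\alpha,\alpha^{-1/(1-q)}w)$ is a homeomorphism from $(0,\infty)\times C(\overline{\Omega})$ onto itself, it sends $\widetilde{\mathcal{C}}\cap\{\alpha>0\}$ to a connected set $\widehat{\mathcal{C}}\subset(0,\alpha_s]\times\mathcal{P}^\circ$ of solutions of $(P_\alpha)$ that contains initial segments of both $\mathcal{C}_1$ and $\mathcal{C}_2$ for $\alpha>0$ small.

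Next, I would enlarge $\widehat{\mathcal{C}}$ to cover all of $\mathcal{C}_1,\mathcal{C}_2,\mathcal{C}_3$. By the local exactness in Proposition \ref{lbuN}(ii-a,b), the initial segments of $\mathcal{C}_1$ and $\mathcal{C}_2$ inside $\widehat{\mathcal{C}}$ cannot be left by any connected set of solutions without following these curves, so $\mathcal{C}_1\cup\mathcal{C}_2\subset\widehat{\mathcal{C}}$ (extending the curve $\mathcal{C}_1$ up to $\alpha_s$ is allowed by Proposition \ref{ast:es}, while $\mathcal{C}_2$ is extended as in the proof of Proposition \ref{lbuN}(ii-b)). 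At $\alpha=\alpha_s$, $\mathcal{C}_3$ meets $\mathcal{C}_1$ at $(\alpha_s,u_{1,\alpha_s})$ by Proposition \ref{prop:bend}, so $\widehat{\mathcal{C}}\cup\mathcal{C}_3$ is connected. Taking the closure inside $[0,\alpha_s]\times C(\overline{\Omega})$ adjoins $(0,u_{\mathcal{N}})$ as the $\alpha\to 0^+$ limit along $\mathcal{C}_1$ and the formal point $(0,\infty)$ as the limit along $\mathcal{C}_2$; both limits are solutions-of-$(P_0)$ points in the extended solutions set used in \cite{KRQU2019}. The resulting set is a closed connected subset of solutions in $[0,\alpha_s]\times\mathcal{P}^\circ$, and I define $\mathcal{C}_\ast$ to be the component (maximal closed connected subset) containing it.

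Finally, I would verify \eqref{compoCast}. For the intersection with $\{(\alpha,0)\}$: every solution on $\mathcal{C}_\ast$ lies in $\mathcal{P}^\circ$ with $u\geq u_{\mathcal{N}}$ on $\overline{\Omega}$ by Proposition \ref{lbuN}(i), so no point of $\mathcal{C}_\ast$ approaches the trivial solution. For the intersection with $\{(\alpha,\infty)\}$: blow-up is ruled out for $\alpha$ in any compact subinterval of $(0,\alpha_s]$ by the a priori bound \cite[Proposition 3.2]{KRQU2019}, and as $\alpha\to 0^+$ the unique blow-up profile is the one of $\mathcal{C}_2$ by \cite[Propositions 3.10, 3.11]{KRQU2019}, giving only the point $(0,\infty)$. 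For the intersection with $\{(0,u)\}$: the unique solution of $(P_0)$ in $\mathcal{P}^\circ$ is $u_{\mathcal{N}}$ by \cite[Theorem 3.1]{BPT2}, so the only finite limit at $\alpha=0$ is $(0,u_{\mathcal{N}})$. The main obstacle I anticipate is the last verification for $\{(\alpha,\infty)\}$: one must rule out that the enlarged component $\mathcal{C}_\ast$ picks up additional blow-up limits at $\alpha=0$ beyond $\mathcal{C}_2$, which relies precisely on the uniqueness-of-blow-up-branch statement in \cite[Proposition 3.10]{KRQU2019} together with the non-recrossing of $\mathcal{C}_1$ afforded by Propositions \ref{lbuN}(i) and \ref{unstable}. \qed
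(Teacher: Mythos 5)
Your overall strategy coincides with the paper's: start from the subcontinuum of $(R_\alpha)$-solutions constructed as in \cite[Theorem 4.4]{KRQU2019}, transfer it to $(P_\alpha)$ by $u=\alpha^{-1/(1-q)}w$, glue in $\mathcal{C}_1$, $\mathcal{C}_2$, $\mathcal{C}_3$ via the exactness statements, and check \eqref{compoCast} with the lower bound $u\geq u_{\mathcal{N}}$ and the a priori bounds. However, there is one genuine gap, and it is precisely the step that constitutes the new technical content of this proposition relative to \cite{KRQU2019}.

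You assert without justification that the image $\widehat{\mathcal{C}}$ of $\widetilde{\mathcal{C}}\cap\{\alpha>0\}$ under the change of variables is contained in $(0,\alpha_s]\times\mathcal{P}^{\circ}$. Under the standing hypothesis of this paper, $q\in\mathcal{I}_{\mathcal{N}}$ rather than $q\in\mathcal{A}_{\mathcal{N}}$, nontrivial nonnegative solutions of $(P_\alpha)$ need \emph{not} belong to $\mathcal{P}^{\circ}$ (see \cite[Remark 3.7]{KRQU2019}), so the subcontinuum produced by the argument of \cite[Theorem 4.4]{KRQU2019} is a priori only a continuum of nontrivial nonnegative solutions. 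The claim that it stays inside $\mathcal{P}^{\circ}$ requires proof; the paper supplies it by a connectedness argument: setting $E:=\{(\alpha,u)\in\mathcal{C}_{+}:u\in\mathcal{P}^{\circ}\}$, one checks that $E$ is nonempty (it contains $(\alpha,u_{2,\alpha})$), open in $\mathcal{C}_{+}$ (by definition of $\mathcal{P}^{\circ}$), and closed in $\mathcal{C}_{+}$ — closedness being the delicate point, obtained by applying Proposition \ref{lbuN}(i) to the elements of $E$ (which are positive on $\Omega_{+}^{a}$, so the proposition applies) to get the uniform lower bound $u_i\geq u_{\mathcal{N}}$, which survives passage to the limit and forces the limit to lie in $\mathcal{P}^{\circ}$. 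Note that you cannot shortcut this by applying Proposition \ref{lbuN}(i) directly to an arbitrary point of the continuum, since that proposition requires $u>0$ in $\Omega_{+}^{a}$ as a hypothesis. Your later appeal to ``every solution on $\mathcal{C}_\ast$ lies in $\mathcal{P}^{\circ}$ with $u\geq u_{\mathcal{N}}$'' therefore presupposes exactly what is missing. Once this open-and-closed argument is inserted, the rest of your outline (extension of the curves, attachment of $(0,u_{\mathcal{N}})$ and $(0,\infty)$, and the verification of \eqref{compoCast}) matches the paper's proof.
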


\noindent\textit{Proof.} Arguing as in the proof of \cite[Theorem
4.4]{KRQU2019}, we have a subcontinuum $\gamma_{0}$ in $[0,\infty)\times
C^{1}(\overline{\Omega})$ of solutions of $(R_{\alpha})$ for $\alpha>0$,
satisfying $\gamma_{0}\cap\{(\alpha
,w)=(0,c):\mbox{$c\geq 0$ is a constant}\}=\{(0,0),(0,c_{a})\}$.

Now we see that $\gamma_{0}^{+}:=\gamma_{0}\setminus\{(0,0),(0,c_{a})\}$ is
connected. By the change of variables $u=\alpha^{-\frac{1}{1-q}}w$, we convert
$\gamma_{0}^{+}$ to a connected set $\mathcal{C}_{+}$ of solutions of
$(P_{\alpha})$, which contains $(\alpha,u_{2,\alpha})$ with $\alpha
\in(0,\overline{\alpha})$ by construction (see \cite[Proposition
3.11]{KRQU2019}, Proposition \ref{lbuN}(iib)).
We assert here that $\mathcal{C}_{+}$ consists of solutions in $\mathcal{P}%
^{\circ}$. Indeed, set
\[
E:=\{(\alpha,u)\in\mathcal{C}_{+}:u\in\mathcal{P}^{\circ}\}.
\]
Note that $E\neq\emptyset$ since $u_{2,\alpha}\in\mathcal{P}^{\circ}$. Then,
we shall show that $E$ is open and closed in $\mathcal{C}_{+}$ with respect to
the norm in $\mathbb{R}\times C^{1}(\overline{\Omega})$. Once this holds, we
deduce that $E=\mathcal{C}_{+}$, as desired.

First, from the definition of $E$, we see that $E$ is open in $\mathcal{C}%
_{+}$. Next we verify that $E$ is closed in $\mathcal{C}_{+}$. Assume that
$(\alpha_{i},u_{i})\in E$ converges to some $(\alpha_{\infty},u_{\infty}%
)\in\mathcal{C}_{+}$. Then, from Proposition \ref{lbuN}(i) we deduce that
$u_{i} \geq u_{\mathcal{N}}$ for all $i$. It follows, by passing to the limit,
that $u_{\infty}\geq u_{\mathcal{N}}$, so $(\alpha_{\infty},u_{\infty})\in E$,
as desired.

Finally, let $\mathcal{C}_{\ast}$ be the component of solutions in
$\mathcal{P}^{\circ}$ of $(P_{\alpha})$ in the space $[0,\infty)\times
C^{1}(\overline{\Omega})$ such that $\mathcal{C}_{\ast}\supset\mathcal{C}_{+}%
$. Then, we can check that $\mathcal{C}_{\ast}$ satisfies \eqref{compoCast}
and includes $\mathcal{C}_{1},\mathcal{C}_{2}$ and $\mathcal{C}_{3}$,
combining the following results: Proposition \ref{lbuN}(ii), Proposition
\ref{prop:bend}, Proposition \ref{ast:es} and \cite[Proposition 3.2]%
{KRQU2019}.
\qed\newline

From
the above results we now prove Theorem \ref{mthm2}, which leads us to Theorem
\ref{mthm}. \newline

\noindent\textit{Proof of Theorem \ref{mthm2}}: Item (i) follows from
Propositions \ref{lbuN}(ii),
\ref{unstable}, \ref{ast:es} and Remark \ref{minim}. Item (ii) is a
consequence of Propositions \ref{lbuN}(ii) and \ref{monoton}, while item (iii)
follows from Propositions \ref{prop:bend} and \ref{ast:es} where the exactness
follows from Proposition \ref{ast:es}. Finally, the last item follows from
Proposition \ref{prop:compo:wthoutA2}. \qed

\section{Proof of Theorem \ref{ball}}

\label{sec:prth1.3}

Let us recall some results on the following \textit{two} eigenvalue problems,
considered
for a generic sign-changing $m\in C(\overline{\Omega})$:
\begin{equation}%
\begin{cases}
-\Delta\varphi=\lambda m(x)\varphi & \mbox{ in }\Omega,\\
\partial_{\nu}\varphi=\alpha\varphi & \mbox{ on }\partial\Omega,
\end{cases}
\label{epro01}%
\end{equation}
where
$\lambda=\lambda(\alpha,m)$ is an eigenvalue; and, for $\lambda\in\mathbb{R}$
fixed,
\begin{equation}%
\begin{cases}
-\Delta\phi=\lambda m(x)\phi+\mu\phi & \mbox{ in }\Omega,\\
\partial_{\nu}\phi=\alpha\phi & \mbox{ on }\partial\Omega,
\end{cases}
\label{epro02}%
\end{equation}
where $\mu=\mu(\lambda,\alpha,m)$ is an eigenvalue. It is well known that
\eqref{epro02} has a sequence of eigenvalues
\[
\mu_{1}<\mu_{2}\leq\mu_{3}\leq...
\]
It is easy to see that, for every $\alpha$,
\[
\mu_{k}(\lambda,\alpha,m)=0\mbox{ for some }k\geq1\ \Longleftrightarrow
\mbox{ $\lambda$ is an eigenvalue of \eqref{epro01}}.
\]
Moreover, the mapping $\lambda\mapsto\mu_{1}(\lambda,\cdot,\cdot)$ is concave,
and the mapping $\lambda\mapsto\mu_{2}(\lambda,\cdot,\cdot)$ is continuous and
satisfies $\mu_{2}(\lambda,\alpha,u)\rightarrow-\infty$ as $\lambda
\rightarrow\pm\infty$
(see e.g. \cite[Lemmas 3.3 and 7.2, Theorem 5.1]{RR}).

Recall that an eigenvalue of \eqref{epro01} or \eqref{epro02} is
\textit{principal} if it possesses a positive eigenfunction (which is in
$\mathcal{P}^{\circ}$ by the strong maximum principle and Hopf's lemma).
It is well known that $\mu_{1}$ is principal and simple, and any other
$\mu_{k}$ ($k\neq1$) is not principal. So, the principal eigenvalues of
\eqref{epro01} appear if and only if $\mu_{1}(\lambda,\alpha,m)=0$.

In \cite[Theorem 5]{AB99} the authors proved that under $(A.0)$ there exists
$\beta_{0}=\beta_{0}(m)>0$ such that \eqref{epro01} has a principal eigenvalue
if and only if $\alpha\leq\beta_{0}$. Additionally if $\alpha<\beta_{0}$ then
the principal eigenvalues of this problem are given exactly by
\begin{align}
\lambda_{\pm1}  &  =\lambda_{\pm1}(m,\alpha)\nonumber\\
&  =\pm\inf\left\{  \frac{\int_{\Omega}|\nabla\phi|^{2}-\alpha\int
_{\partial\Omega}\phi^{2}}{\int_{\Omega} m(x)\phi^{2}}:\phi\in H^{1}%
(\Omega),\ \int_{\Omega}m(x)\phi^{2}\gtrless0\right\}  . \label{def:lam+-1}%
\end{align}
These infima are achieved by $\phi_{\pm1}=\phi_{\pm1}(m,\alpha)$, normalized
as $\int_{\Omega}(\phi_{\pm1})^{2}=1$, which in particular satisfy
\begin{equation}
\int_{\Omega}m(x)(\phi_{\pm1})^{2}\gtrless0. \label{nonzero}%
\end{equation}
In addition,
$0<\lambda_{-1}(m,\alpha)<\lambda_{+1}(m,\alpha)$ for $0<\alpha<\beta_{0}$.

Let $\mu_{k}(\alpha):=\mu_{k}(0,\alpha,m)$ (note that $\mu_{k}$ does not
depend on $m$ when $\lambda=0$). It is well known that $\mu_{1}(\alpha
),\mu_{2}(\alpha)$ are characterized by the following variational formulas,
respectively:
\[
\mu_{1}(\alpha)=\min_{\overset{\phi\in H^{1}(\Omega)}{\Vert\phi\Vert_{2}=1}%
}\left(  \int_{\Omega}|\nabla\phi|^{2}-\alpha\int_{\partial\Omega}\phi
^{2}\right)  ,
\]%
\begin{equation}
\mu_{2}(\alpha)=\min_{\overset{E\subset H^{1}(\Omega)}{dim(E)=2}}%
\max_{\overset{\phi\in E}{\Vert\phi\Vert_{2}=1}}\left(  \int_{\Omega}%
|\nabla\phi|^{2}-\alpha\int_{\partial\Omega}\phi^{2}\right)  . \label{mu2c}%
\end{equation}

Let us denote by
$\{\alpha_{j}\}$ the sequence of eigenvalues of the Steklov problem
\eqref{st}. We note that $\alpha_{j}$ is an eigenvalue of \eqref{st} if and
only if $\mu_{k}(\alpha_{j})=0$ for some $k\geq1$. Note also that $\alpha
_{1}=0$ is the only principal eigenvalue of \eqref{st}.

We prove some useful results on $\mu_{2}(\alpha)$ and $\lambda_{2}^{-}%
(\alpha,m)$:

\begin{prop}
\label{prop:mu2} The following three assertions hold:

\begin{enumerate}
\item $\alpha\mapsto\mu_{2}(\alpha)$ is
non-increasing for $\alpha\geq0$, i.e., $\mu_{2}(\alpha)\geq\mu_{2}(\beta)$
for $0\leq\alpha<\beta$.

\item We have
\[
\left\{
\begin{array}
[c]{ll}%
\mu_{2}(\alpha)>0, & \text{for }\alpha\in(0,\alpha_{2}),\\
\mu_{2}(\alpha)=0, & \text{for }\alpha=\alpha_{2},\\
\mu_{2}(\alpha)<0, & \text{for }\alpha\in(\alpha_{2},\infty).
\end{array}
\right.
\]

\item $\lambda_{2}^{-}(\alpha,m)<0$ for $0<\alpha<\alpha_{2}$ and any
sign-changing $m\in C(\overline{\Omega})$.
\end{enumerate}
\end{prop}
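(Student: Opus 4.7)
The plan is to exploit three ingredients already at hand: the variational formula \eqref{mu2c} for $\mu_{2}(\alpha)$, the spectral dictionary $\mu_{k}(\lambda,\alpha,m)=0\Leftrightarrow\lambda$ is an eigenvalue of \eqref{epro01}, and the minimality of $\alpha_{2}$ among the positive Steklov eigenvalues.

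For (i) I would argue directly from \eqref{mu2c}: for every $\phi\in H^{1}(\Omega)$ with $\Vert\phi\Vert_{2}=1$, the quantity $\int_{\Omega}|\nabla\phi|^{2}-\alpha\int_{\partial\Omega}\phi^{2}$ is a non-increasing function of $\alpha$, since $\int_{\partial\Omega}\phi^{2}\geq 0$. Both the inner maximum over a $2$-dimensional $E$ and the outer minimum over such $E$ inherit this monotonicity, giving $\mu_{2}(\alpha)\geq\mu_{2}(\beta)$ whenever $\alpha<\beta$.

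For (ii) the key inputs are $\mu_{2}(0)>0$ (the first nontrivial Neumann eigenvalue of $-\Delta$), $\mu_{1}(\alpha)<0$ for every $\alpha>0$ (test the Rayleigh quotient with the constant $1$), and a judicious test subspace. I would take $E=\mathrm{span}\{1,\psi_{2}\}$, where $\psi_{2}$ is a Steklov eigenfunction associated with $\alpha_{2}$; by $L^{2}(\partial\Omega)$-orthogonality to the first Steklov eigenfunction $1$, one has $\int_{\partial\Omega}\psi_{2}=0$, and Green's identity together with $\Delta\psi_{2}=0$, $\partial_{\nu}\psi_{2}=\alpha_{2}\psi_{2}$ yields, for $\phi=a+b\psi_{2}\in E$,
\[
\int_{\Omega}|\nabla\phi|^{2}-\alpha\int_{\partial\Omega}\phi^{2}=(\alpha_{2}-\alpha)b^{2}\int_{\partial\Omega}\psi_{2}^{2}-\alpha\,a^{2}|\partial\Omega|.
\]
At $\alpha=\alpha_{2}$ this expression is non-positive, so $\mu_{2}(\alpha_{2})\leq 0$; for $\alpha>\alpha_{2}$ it is a negative-definite quadratic in $(a,b)$, so its maximum over the compact $L^{2}$-unit sphere of $E$ is strictly negative and $\mu_{2}(\alpha)<0$. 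To upgrade $\mu_{2}(\alpha_{2})\leq 0$ to equality and deduce strict positivity on $(0,\alpha_{2})$, I would combine the continuity and monotonicity of $\mu_{2}$ with the minimality of $\alpha_{2}$: by the dictionary, any zero of $\mu_{2}$ in $(0,\alpha_{2}]$ is a Steklov eigenvalue, and since $\mu_{1}(\alpha)<0$ for $\alpha>0$ rules out the principal branch, no such zero can lie in $(0,\alpha_{2})$. Thus the smallest positive zero of $\mu_{2}$ must coincide with $\alpha_{2}$, giving simultaneously $\mu_{2}(\alpha_{2})=0$ and $\mu_{2}(\alpha)>0$ on $(0,\alpha_{2})$.

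For (iii), fix $\alpha\in(0,\alpha_{2})$ and a sign-changing $m$. By (ii), $\mu_{2}(0,\alpha,m)=\mu_{2}(\alpha)>0$, and by the recalled property from \cite{RR}, $\mu_{2}(\lambda,\alpha,m)\rightarrow-\infty$ as $\lambda\rightarrow-\infty$. Continuity and the intermediate value theorem then produce $\lambda^{\ast}<0$ with $\mu_{2}(\lambda^{\ast},\alpha,m)=0$; interpreting $\lambda_{2}^{-}(\alpha,m)$ as the (largest) such negative zero, the spectral dictionary identifies it as an eigenvalue of \eqref{epro01} on the second branch, and it is strictly negative by construction. The step I expect to be most delicate is the identification $\mu_{2}(\alpha_{2})=0$ in (ii): the test-subspace computation only delivers the inequality $\leq 0$, and forcing equality (ruling out strict negativity at $\alpha_{2}$) requires the combined use of the minimality of $\alpha_{2}$, the sign of $\mu_{1}$, and the spectral dictionary. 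Items (i) and (iii) then follow routinely from the variational and asymptotic properties already available.
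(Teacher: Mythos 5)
Your arguments for (i) and (ii) are sound. Part (i) is the same observation as in the paper. Part (ii) is a legitimate variant: where the paper takes $\mu_{2}(\alpha_{2})=0$ as clear and handles $\alpha>\alpha_{2}$ by combining monotonicity with the discreteness of the Steklov spectrum, you compute directly on the test subspace $E=\mathrm{span}\{1,\psi_{2}\}$ (the identity $\int_{\Omega}|\nabla\phi|^{2}-\alpha\int_{\partial\Omega}\phi^{2}=(\alpha_{2}-\alpha)b^{2}\int_{\partial\Omega}\psi_{2}^{2}-\alpha a^{2}|\partial\Omega|$ is correct, using $\int_{\partial\Omega}\psi_{2}=0$ and Green's formula), and then use the same dictionary/minimality argument as the paper to place the first zero of $\mu_{2}$ at $\alpha_{2}$. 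That is fine and arguably more self-contained.

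Part (iii), however, has a genuine gap, and it is precisely the step you dismiss as routine. What you prove is that $\lambda\mapsto\mu_{2}(\lambda,\alpha,m)$ is positive at $\lambda=0$ and tends to $-\infty$ as $\lambda\to-\infty$, hence has a negative zero; you then \emph{declare} $\lambda_{2}^{-}(\alpha,m)$ to be ``the largest such negative zero'' and conclude it is negative ``by construction.'' That is circular unless you justify the interpretation. In this paper $\lambda_{2}^{-}(\alpha,m)=-\lambda_{2}^{+}(\alpha,-m)$ is the second eigenvalue on the negative branch of \eqref{epro01}, characterized by the two-domain formula \eqref{e2}, and the way $\lambda_{2}^{-}<0$ is used in the proof of Theorem \ref{ball} is to guarantee that \eqref{epro01} has \emph{no} non-principal eigenvalue in $[0,\lambda_{1}^{-})$. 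Via the dictionary, that amounts to showing $\mu_{2}(\lambda,\alpha,m)>0$ for \emph{every} $\lambda$ in that interval, not merely at $\lambda=0$. Only $\mu_{1}(\cdot,\alpha,m)$ is concave; $\mu_{2}(\cdot,\alpha,m)$ is merely continuous, so positivity at the single point $\lambda=0$ together with the intermediate value theorem cannot exclude further zeros of $\mu_{2}$ (hence positive non-principal eigenvalues) between $0$ and $\lambda_{1}^{-}$, and your argument would then certify the wrong eigenvalue as $\lambda_{2}^{-}$. This is exactly the obstruction the paper's proof is built to avoid: it never tracks the eigencurve $\lambda\mapsto\mu_{2}(\lambda,\alpha,m)$, but instead transfers the positivity of $\mu_{2}(\alpha)$ to $\lambda_{2}^{+}(\alpha,-m)$ pair-by-pair through the Courant-type characterization over disjoint open sets $(A,B)$, comparing $\mu^{+}(A)$ with $\lambda^{+}(A)$ on each fixed admissible set. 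To repair your proof you would need either that comparison argument or some substitute (e.g., a two-dimensional test-space argument showing that a non-principal eigenvalue in $(0,\lambda_{1}^{-})$ forces $\mu_{2}(\alpha)\leq 0$); as written, the claim does not follow.
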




\textit{Proof.}

\begin{enumerate}
\item By \eqref{mu2c} it is clear that $\mu_{2}(\alpha)\geq\mu_{2}(\beta)$ for
$0\leq\alpha<\beta$.

\item First of all, it is clear that $\mu_{2}(\alpha_{2})=0$. Assume now that
$\mu_{2}(\alpha)\leq0 $ for some $0<\alpha<\alpha_{2}$. Since $\mu_{2}(0)>0$
and $\alpha\mapsto\mu_{2}(\alpha)$ is continuous, we infer that there exists
$0<\beta\leq\alpha$ such that $\mu_{2}(\beta)=0$. Thus $\beta$ is a
non-principal eigenvalue of \eqref{st}, which contradicts $\beta<\alpha_{2}$.
Finally, let $\alpha>\alpha_{2}$. By the previous item we have $\mu_{2}%
(\alpha)\leq0$ for $\alpha>\alpha_{2}$. However, if $\mu_{2}(\alpha)=0$ then
$\mu_{2}$ vanishes in $(\alpha_{2},\alpha)$, which contradicts the
discreteness of the spectrum of \eqref{st}.

\item By the previous item,
it suffices to prove that $\lambda_{2}^{-}(\alpha,m)<0$ (which is equivalent
to $\lambda_{2}^{+}(\alpha,-m)>0$) if $\mu_{2}(\alpha)>0$. To do so, we shall
use a formulation of second eigenvalues that goes back at least to \cite{V}
(see also \cite{T}):
\[
\mu_{2}(\alpha)=\min_{(A,B)\in\mathcal{J}}\max(\mu^{+}(A),\mu^{+}(B)),
\]
where, for any open set $A\subset\Omega$,
\[
\mu^{+}(A):=\inf\left\{  \int_{\Omega}|\nabla u|^{2}-\alpha\int_{\partial
\Omega}u^{2}:u\in H_{A}^{1}(\Omega),\Vert u\Vert_{2}=1\right\}  ,
\]%
\[
H_{A}^{1}(\Omega):=\{u\in H^{1}(\Omega),u=0\mbox{ a.e. in }\Omega
\setminus\overline{A}\},
\]
and
\[
\mathcal{J}:=\{(A,B):A,B\mbox{ are disjoint nonempty open subsets of }\Omega
\}.
\]
In a similar way,
\begin{equation}
\lambda_{2}^{+}(\alpha,-m)=\min_{(A,B)\in\mathcal{J}}\max(\lambda
^{+}(A),\lambda^{+}(B)) \label{e2}%
\end{equation}
where
\[
\lambda^{+}(A):=\inf\left\{  \int_{\Omega}|\nabla u|^{2}-\alpha\int
_{\partial\Omega}u^{2}:u\in H_{A}^{1}(\Omega),\int_{\Omega}m(x)u^{2}%
=-1\right\}  .
\]
We set $\lambda^{+}(A)=\infty$ if there is no $u$ satisfying the above
constraints. Note that $\lambda^{+}(A)$ is achieved whenever it is finite.
From the formula above we see that $\lambda_{2}^{+}(\alpha,-m)>0$ if, and only
if, given $(A,B)\in\mathcal{J}$, we have either $\lambda^{+}(A)>0$ or
$\lambda^{+}(B)>0$. Now, if $\mu_{2}(\alpha)>0$ then, for such a pair $(A,B)$,
we have either $\mu^{+}(A)>0$ or $\mu^{+}(B)>0$. If $\mu^{+}(A)>0$ then
$\int_{\Omega}|\nabla u|^{2}-\alpha\int_{\partial\Omega}u^{2}>0$ for every
nontrivial $u\in H_{A}^{1}(\Omega)$. In particular, this inequality holds if,
in addition, $\int_{\Omega}m(x)u^{2}=-1$, which shows that $\lambda^{+}(A)>0$.
Similarly, we see that $\mu^{+}(B)>0$ implies $\lambda^{+}(B)>0$. Therefore
the minimum in \eqref{e2} is positive, which yields the conclusion.
\qed\newline
\end{enumerate}

\noindent\textit{Proof of Theorem \ref{ball}:}\newline

Let us prove (i). We proceed in several steps. The first one is to show that
the IFT can be applied at $\left(  \alpha,u\right)  $, where $\alpha\in\left(
0,\alpha_{s}\right)  $ and $u\in\mathcal{P}^{\circ}$ is a solution of $\left(
P_{\alpha}\right)  $. Since the IFT can be applied at $u_{1,\alpha}$ and this
is the only stable solution for $\alpha\in\left(  0,\alpha_{s}\right)  $, cf.
Propositions \ref{unstable} and \ref{ast:es}, we may assume without loss of
generality that $\gamma_{1}\left(  \alpha,u\right)  <0$. We consider
\eqref{epro01} with $m=au^{q-1}$. Note that $\lambda=1$ is a principal
eigenvalue of this problem (associated with $u\in\mathcal{P}^{\circ}$).
Moreover, since $\int_{\Omega}au^{q-1}<0$ we have $0<\lambda_{1}^{-}\leq
1\leq\lambda_{1}^{+}$.

By \cite[Proposition 3.4]{KRQU2019} (which clearly holds if $q\in
\mathcal{I}_{\mathcal{N}}$) and (\ref{hip}) we have
\[
\alpha<\alpha_{s}\leq\frac{-\int_{\Omega}a}{\int_{\partial\Omega
}u_{\mathcal{N}}^{1-q}}\leq\alpha_{2}.
\]
So, from Proposition \ref{prop:mu2}(iii), we deduce that $\lambda_{2}%
^{-}=\lambda_{2}^{-}\left(  \alpha,m\right)  <0$. Summing up, the eigenvalue
sequence of \eqref{epro01} satisfies
\[
\cdots\leq\lambda_{3}^{-}\leq\lambda_{2}^{-}<0<\lambda_{1}^{-}\leq1\leq
\lambda_{1}^{+}<\lambda_{2}^{+}\leq\lambda_{3}^{+}\leq\cdots.
\]

We now observe that the IFT can be applied at $\left(  \alpha,u\right)  $.
Indeed, assume by contradiction that $0=\gamma_{k}\left(  \alpha,u\right)  $,
for some $k\geq2$. Then $q\in(0,1)$ is an eigenvalue of \eqref{epro01}.
Moreover, since $k\not =1$, $\phi$ changes sign in $\Omega$, and so
$q\not \in \{\lambda_{1}^{-},\lambda_{1}^{+}\}$. So, since $\lambda_{2}^{-}%
<0$, we have $q=\lambda_{j}^{+}$ for some $j\geq2$, and then $1\leq\lambda
_{1}^{+}<\lambda_{2}^{+}\leq q$,
which is not possible. Therefore, for $\alpha\in\left(  0,\alpha_{s}\right)  $
and a solution $u\in\mathcal{P}^{\circ}$ of $\left(  P_{\alpha}\right)  $, we
see that the IFT can be applied at $\left(  \alpha,u\right)  $, as claimed.

The next step is to prove that the curve $\mathcal{C}_{2}$ can be extended to
all $\alpha\in\left(  0,\alpha_{s}\right)  $. Indeed, take the maximal
$\alpha$ of this curve, say $\alpha_{max}$, and suppose $\alpha_{max}%
<\alpha_{s}$. As $\alpha\nearrow\alpha_{max}$, the family of solutions
$\{u_{\alpha}\}$ is bounded \cite[Proposition 3.2]{KRQU2019} and away from $0$
in $C(\overline{\Omega})$ by
Proposition \ref{lbuN}(i). So, going to the limit we find a nontrivial
solution $u_{\alpha_{max}}$ of $\left(  P_{\alpha_{max}}\right)  $. Moreover,
$u_{\alpha_{max}}\in\mathcal{P}^{\circ}$, and recalling Propositions
\ref{unstable} and \ref{ast:es}
we get that $\gamma_{1}(\alpha_{max},u_{\alpha_{max}})<0$.
We thus apply the IFT at $(\alpha_{max},u_{\alpha_{max}})$ and we get a
contradiction with the maximality of $\alpha_{max}$. Thus, $\mathcal{C}_{2}$
can be extended to $\alpha\in\left(  0,\alpha_{s}\right)  $, as asserted.

The final step is to show that any solution $v$ of $\left(  P_{\alpha}\right)
$, $\alpha\in\left(  0,\alpha_{s}\right)  $, must be either $u_{1,\alpha}$ or
$u_{2,\alpha}$. In order to see this, we argue as above (also similarly as in
the proof of Proposition \ref{unstable}). Assume that $(\alpha,v)\not \in
\mathcal{C}_{1}$. Then, we deduce that $\gamma_{1}(\alpha,v)<0$. Applying the
IFT at $\left(  \alpha,v\right)  $ we obtain a curve of $\mathcal{C}%
_{3}:v=v_{\alpha}\in\mathcal{P}^{\circ}$ of solutions of $\left(  P_{\alpha
}\right)  $. Take the minimal $\alpha$ of this curve, say $\alpha_{min}$, and
suppose $\alpha_{min}>0$. Reasoning as before we obtain some solution
$v_{\alpha_{min}}\in\mathcal{P}^{\circ}$ of $\left(  P_{\alpha_{min}}\right)
$. We must have $\gamma_{1}\left(  \alpha_{min},v_{\alpha_{min}}\right)  <0$,
but, in this case, applying again the IFT we get a contradiction. So,
$\alpha_{min}=0$. Now, by the exactness results for $\alpha>0$ small and the
condition $\gamma_{1}(\alpha,v_{\alpha})<0$, we have $v_{\alpha}=u_{2,\alpha}$
for such $\alpha$, implying $(\alpha,v)=(\alpha,u_{2,\alpha})\in
\mathcal{C}_{2}$.

To conclude the proof we note that, if $\partial\Omega\subseteq\partial
\Omega_{+}^{a}$ and $q\in\mathcal{A}_{\mathcal{N}}$, then from \cite[Lemma
2.1(i) and Proposition 2.3]{KRQU2019} we have that $q\in\mathcal{A}_{\alpha}$
for all $\alpha>0$. In other words, any nontrivial solution of $(P_{\alpha})$
is in $\mathcal{P}^{\circ}$ for every $\alpha>0$, and (ii) follows. \qed


\section{Bifurcaton analysis as $q \to1^{-}$}

\label{sec:bif}

This section is devoted to further analysis of the solutions set in
$\mathcal{P}^{\circ}$ of $(P_{\alpha})$ as $q\rightarrow1^{-}$ for $\alpha
\in(0,\beta_{0})$ fixed, by recalling the critical value $\beta_{0}>0$ from
Section \ref{sec:prth1.3}. Moreover, we show how to deduce the asymptotic
behavior of $\alpha_{s}(a,q)$
(given by \eqref{def:alphs}) as $q\rightarrow1^{-}$.

Throughout
this section it will be convenient to rename $\left(  P_{\alpha}\right)  $ as
$\left(  P_{a,q,\alpha}\right)  $ or simply $\left(  P_{q}\right)  $ if no
confusion arises. We shall handle the eigenvalue problem \eqref{epro01} with
$m=a$. Assume $(A.0)$, $q\in(0,1)$ and $\alpha\in\left(  0,\beta_{0}\right)
$.
We look at $q$ as a bifurcation parameter in $\left(  P_{q}\right)  $,
assuming that
\begin{equation}
\lambda_{+1}(a,\alpha)=1 \label{def:lam1}%
\end{equation}
to obtain solutions of $(P_{q})$ bifurcating from the line $(q,u)=(1,t\phi
_{+1})$ where $\lambda_{+1}$ and $\phi_{+1}$ are from \eqref{def:lam+-1} and
\eqref{nonzero}, respectively. A similar procedure can be applied when
$\lambda_{-1}(a,\alpha)=1$.

Recall that
$\lambda_{+1}$ is simple, so that setting
\[
A:=-\Delta-\lambda_{+1}a(x),\quad C_{\alpha}^{2+r}(\overline{\Omega
}):=\left\{  u\in C^{2+r}(\overline{\Omega}):B_{\alpha}u:=\frac{\partial
u}{\partial\nu}-\alpha u=0\mbox{ on }\partial\Omega\right\}  ,
\]
we have that $\mathrm{Ker}A=\langle\phi_{+1}\rangle:=\{t\, \phi_{+1}%
:t\in\mathbb{R}\}$. Condition \eqref{def:lam1} implies that $(P_{q})$
possesses the trivial line $\Gamma_{1}:=\{(1,t\, \phi_{+1}),t>0\}$. As in the
Neumann case $\alpha=0$ (see \cite[Section 2]{KRQUnodea}), we employ the
Lyapunov-Schmidt type reduction to analyze the bifurcating solutions in
$\mathcal{P}^{\circ}$ from $\Gamma_{1}$. The usual decomposition of
$C_{\alpha}^{2+r}(\overline{\Omega})$ is given by the formula $C_{\alpha
}^{2+r}(\overline{\Omega})=\langle\phi_{+1}\rangle+X_{2};u=t\,\phi_{+1}+w,$
where $t=\int_{\Omega}u\phi_{+1}$, and $w=u-(\int_{\Omega}u\phi_{+1})\phi
_{+1}$. So, $X_{2}$ is characterized as $X_{2}=\left\{  w\in C_{\alpha}%
^{2+r}(\overline{\Omega}):\int_{\Omega}w\phi_{+1}=0\right\}  .$ On the other
hand, put
$C^{r}(\overline{\Omega})=Y_{1}+\mathrm{Im}(A)$, where $Y_{1}=\langle\phi
_{+1}\rangle$, and
$\mathrm{Im}(A):=\left\{  f\in C^{r}(\overline{\Omega}):\int_{\Omega}%
f\phi_{+1}=0 \right\}  $. Let $Q$ be the projection of $Y$ to $\mathrm{Im}%
(A)$, given by $Q[f]:=f-\left(  \int_{\Omega}f\phi_{+1}\right)  \phi_{+1}$.
Using $Q$, we
reduce $(P_{q})$ to the following coupled equations:
\begin{align*}
&  Q[Au]=Q[a\left(  x\right)  \left(  u^{q}-u\right)  ],\\
&  (1-Q)[Au]=(1-Q)[a\left(  x\right)  \left(  u^{q}-u\right)  ].
\end{align*}
For $u=t\,\phi_{+1}+w$ the first equation yields
\begin{equation}
-\Delta w-a(x)w=Q[a\left(  x\right)  \{(t\,\phi_{+1}+w)^{q}-(t\,\phi
_{+1}+w)\}]. \label{beq01}%
\end{equation}
The second equation yields that
\begin{equation}
\int_{\Omega}a(x)\left\{  (t\,\phi_{+1}+w)^{q}-(t\,\phi_{+1}+w)\right\}
\phi_{+1}=0. \label{beq02}%
\end{equation}

Now, we see that $(q,t,w)=(1,t_{0},0)$ solves \eqref{beq01} and \eqref{beq02}
for any $t_{0}>0$. The IFT is applicable to \eqref{beq01} at $(1,t_{0},0))$
(as in \cite[Section 2]{KRQUnodea}), and then, \eqref{beq01} is solved exactly
by $w=w(q,t)$ around this point such that $w(1,t_{0})=0$. We plug $w(q,t)$
into \eqref{beq02} to get the following bifurcation equation in $\mathbb{R}%
^{2}$:
\[
\Phi(q,t):=\int_{\Omega}a(x)\{(t\,\phi_{+1}+w(q,t))^{q}-(t\,\phi
_{+1}+w(q,t)\}\phi_{+1}=0,\quad(q,t)\simeq(1,t_{0}).
\]

Set now
\begin{equation}
t_{\pm}=t_{\pm}(a,\alpha):=\exp\left[  -\frac{\int_{\Omega}a\left(  x\right)
(\phi_{\pm1})^{2}\log\phi_{\pm1}}{\int_{\Omega}a\left(  x\right)  (\phi_{\pm
1})^{2}}\right]  , \label{et*}%
\end{equation}
and we are in position to state the next result.

\begin{prop}
\label{t1} Assume $(A.0)$, $\alpha\in(0,\beta_{0})$
and \eqref{def:lam1}. Then the following assertions hold:

\begin{enumerate}
\item Assume that $(q_{n},u_{n})\in(0,1)\times\mathcal{P}^{\circ}$ are
solutions of $(P_{q_{n}})$ such that $(q_{n},u_{n})\rightarrow(1,t\,\phi_{+1}
)\in\Gamma_{1}$ in $\mathbb{R}\times C^{2+r}(\overline{\Omega})$ for some
$t>0$. Then, we have $t=t_{+}$, where $t_{+}$ is given by \eqref{et*}.

\item The set of solutions in $\mathcal{P}^{\circ}$ of $(P_{q})$ consists of
$\Gamma_{1}\cup\Gamma_{2}$ in a neighborhood of $(q,u)=(1,t_{+}\,\phi_{+1})$
in $\mathbb{R}\times C^{2+r}(\overline{\Omega})$, where
\[
\Gamma_{2}:=\{(q,\,t(q)\phi_{+1}+w(q,t(q))):|q-1|<\delta_{\ast}\}\quad
\mbox{for some $\delta_{\ast} > 0$.}
\]
Here $t(q)$ and $w(q,t(q))$ are smooth with respect to $q$ and satisfy
$t(1)=t_{+}$ and $w(1,t_{+})=0$.
\end{enumerate}
\end{prop}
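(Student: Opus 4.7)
The plan is to exploit the Lyapunov--Schmidt reduction already in place: $(P_q)$ near $(1,t_+\phi_{+1})$ is equivalent to the scalar bifurcation equation $\Phi(q,t)=0$, with $w=w(q,t)$ supplied by the IFT applied to \eqref{beq01} with $w(1,t_0)=0$. The central observation is that $\Phi(1,t)\equiv 0$ for all $t$ near $t_+$ (the integrand vanishes pointwise at $q=1$), so that the trivial line $\Gamma_1$ lies entirely in the zero set of $\Phi$ and any transverse branch must bifurcate from a zero of $\partial_q\Phi(1,\cdot)$.

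For part (i), I would decompose $u_n=t_n\phi_{+1}+w_n$ with $t_n=\int_\Omega u_n\phi_{+1}\in\mathbb{R}$ and $w_n\in X_2$. Then $t_n\to t$ and $w_n\to 0$ in $C^{2+r}(\overline{\Omega})$. The uniqueness part of the IFT for \eqref{beq01} forces $w_n=w(q_n,t_n)$ for $n$ large, and the second reduction equation \eqref{beq02} reads $\Phi(q_n,t_n)=0$. Since $\Phi(1,\cdot)\equiv 0$, a Taylor expansion in $q$ gives
\[
0=\Phi(q_n,t_n)=(q_n-1)\,\partial_q\Phi(1,t_n)+O((q_n-1)^2),
\]
and hence $\partial_q\Phi(1,t)=0$. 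A short computation using $w(1,t)=0$ and the cancellation of the $\partial_q w$ contributions at $q=1$ yields
\[
\partial_q\Phi(1,t)=t\log t\int_{\Omega}a\phi_{+1}^{2}+t\int_{\Omega}a\phi_{+1}^{2}\log\phi_{+1}.
\]
Solving for $t>0$, with $\int_\Omega a\phi_{+1}^2\neq 0$ by \eqref{nonzero}, produces precisely $t=t_+$ as defined in \eqref{et*}.

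For part (ii), I would introduce
\[
G(q,t):=\int_{0}^{1}\partial_{q}\Phi\bigl(1+s(q-1),t\bigr)\,ds,
\]
so $G$ is smooth, $(q-1)\,G(q,t)=\Phi(q,t)$, and $G(1,t)=\partial_q\Phi(1,t)$. By part (i), $G(1,t_+)=0$; differentiating the formula for $\partial_q\Phi(1,\cdot)$ displayed above and using the defining relation for $t_+$ gives
\[
\partial_t G(1,t_+)=\int_{\Omega}a\phi_{+1}^{2}\neq 0.
\]
The IFT applied to $G$ at $(1,t_+)$ supplies a smooth curve $t=t(q)$, $|q-1|<\delta_\ast$, with $t(1)=t_+$ and $G(q,t(q))=0$, which for $q\neq 1$ is equivalent to $\Phi(q,t(q))=0$. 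Combined with the trivial line this exhausts the zero set of $\Phi$ near $(1,t_+)$; lifting back via $u=t\phi_{+1}+w(q,t)$ produces $\Gamma_1\cup\Gamma_2$, with positivity $u\in\mathcal{P}^\circ$ along $\Gamma_2$ holding automatically for $q$ close to $1$ since $t_+\phi_{+1}>0$ on $\overline\Omega$.

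The main obstacle is to justify the Lyapunov--Schmidt reduction uniformly on a full neighborhood of the segment $\{(1,t_0\phi_{+1}):t_0\text{ near }t_+\}$, and not only at a single $t_0$. This hinges on the simplicity of $\lambda_{+1}=1$, which ensures that $A=-\Delta-\lambda_{+1}a$ is an isomorphism from $X_2$ onto $\mathrm{Im}(A)$ under the Robin boundary condition, so that \eqref{beq01} can be solved smoothly for $w$ as a function of $(q,t)$ around each $(1,t_0,0)$. Once this is granted, the transversality condition $\partial_t\partial_q\Phi(1,t_+)\neq 0$ drives the secondary bifurcation and the exactness of the local zero set follows from the standard IFT for $G$.
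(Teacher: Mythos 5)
Your proposal is correct and follows essentially the same route as the paper: the paper defers to the Neumann-case proof and singles out exactly your key step, namely applying the IFT to $\hat{\Phi}(q,t)=\Phi(q,t)/(q-1)$ at $(1,t_{+})$ (your $G$ is precisely the smooth extension of $\hat{\Phi}$ across $q=1$), with \eqref{nonzero} guaranteeing $\partial_{t}G(1,t_{+})=\int_{\Omega}a\phi_{+1}^{2}\neq0$. Your computations of $\partial_{q}\Phi(1,t)$ and of the resulting value $t=t_{+}$ check out against \eqref{et*}.
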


Proposition
\ref{t1} can be proved in the same way as \cite[Theorem 2.2]{KRQUnodea}. We
only have to note that condition \eqref{nonzero}
for $\phi_{+1}$ is used
essentially in the proof of (ii) for applying the IFT to $\hat{\Phi
}(q,t)=\frac{\Phi(q,t)}{q-1}$ at $(1,t_{+})$. \newline

As a consequence of the previous
result, we obtain the following:

\begin{theorem}
\label{tt2}

Assume $(A.0)$.
Then:

\begin{enumerate}
\item For any $\alpha\in(0, \beta_{0})$ the problem $(P_{\alpha})$ has at
least two solutions $U_{1,q}=U_{1,q}(a,\alpha), U_{2,q}=U_{2,q}(a,\alpha)$ in
$\mathcal{P}^{\circ}$ for $q$ close to $1$. These solutions satisfy
$U_{2,q}-U_{1,q} \in\mathcal{P}^{\circ}$, and
\[
U_{1,q} \sim\lambda_{+1}^{-\frac{1}{1-q}} \, t_{+} \phi_{+1}, \quad U_{2,q}
\sim\lambda_{-1}^{-\frac{1}{1-q}} \, t_{-} \phi_{-1} \quad\mbox{ for } q
\sim1,
\]
i.e.
\[
\lambda_{+1}(a,\alpha)^{\frac{1}{1-q}} U_{1,q} \rightarrow t_{+} \phi_{+1}
\quad\mbox{and}\quad\lambda_{-1}(a,\alpha)^{\frac{1}{1-q}} U_{2,q} \rightarrow
t_{-} \phi_{-1}
\]
in $C^{2+r}(\overline{\Omega})$ as $q \to1^{-}$ for some $r\in(0,1)$. More precisely:

\begin{enumerate}
\item If $\lambda_{+1}=1$, then $U_{1,q} \rightarrow t_{+} \phi_{+1}$ in
$C^{2+r}(\overline{\Omega})$ as $q\rightarrow1^{-}$;

\item If $\lambda_{+1}>1$, then $U_{1,q} \rightarrow0$ in $C^{2+r}%
(\overline{\Omega})$ as $q\rightarrow1^{-}$;

\item If $\lambda_{+1}<1$, then $\displaystyle\min_{\overline{\Omega}}%
U_{1,q}\rightarrow\infty$ as $q\rightarrow1^{-}$;
\end{enumerate}

and a similar result holds for $U_{2,q}$.

\item Assume in addition $\left(  A.1\right)  $. Then $\alpha_{s}
(a,q)\rightarrow\beta_{0}$ as $q\rightarrow1^{-}$.\medskip
\end{enumerate}
\end{theorem}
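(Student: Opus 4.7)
The plan is to deduce Theorem \ref{tt2} from Proposition \ref{t1} by a weight-scaling argument, and then to exploit part (i) to pin down the asymptotics of $\alpha_s(a,q)$ as $q\to 1^-$.

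For part (i), the key observation is that the normalization \eqref{def:lam1} is not really restrictive: the substitution $v:=\lambda^{-1/(1-q)}w$ converts a solution $w$ of the problem with weight $\lambda a$ ($\lambda>0$) into a solution $v$ of $(P_{a,q,\alpha})=(P_\alpha)$. Fix $\alpha\in(0,\beta_0)$. Since $\lambda_{+1}(\lambda a,\alpha)=\lambda_{+1}(a,\alpha)/\lambda$ and the eigenfunctions $\phi_{\pm 1}$ and the numbers $t_\pm$ in \eqref{et*} are invariant under positive scaling of the weight, the weight $\tilde a_+:=\lambda_{+1}(a,\alpha)\,a$ satisfies $\lambda_{+1}(\tilde a_+,\alpha)=1$. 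Applying Proposition \ref{t1} to $\tilde a_+$ furnishes a branch $\tilde U_{1,q}\in\mathcal{P}^{\circ}$ of solutions of $(P_{\tilde a_+,q,\alpha})$ with $\tilde U_{1,q}\to t_+\phi_{+1}$ in $C^{2+r}(\overline{\Omega})$. Setting $U_{1,q}:=\lambda_{+1}(a,\alpha)^{-1/(1-q)}\tilde U_{1,q}$ yields a solution of $(P_\alpha)$ with $\lambda_{+1}^{1/(1-q)}U_{1,q}\to t_+\phi_{+1}$. The analogous procedure (bifurcation from $t_-\phi_{-1}$ when $\lambda_{-1}=1$, as indicated right after Proposition \ref{t1}) applied to $\tilde a_-:=\lambda_{-1}(a,\alpha)\,a$ produces $U_{2,q}$ with $\lambda_{-1}^{1/(1-q)}U_{2,q}\to t_-\phi_{-1}$. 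Cases (a)--(c) then follow by inspecting $\lambda_{+1}^{-1/(1-q)}$, which tends to $1$, $0$, or $+\infty$ according as $\lambda_{+1}(a,\alpha)$ equals, exceeds, or is less than $1$. The strict ordering $U_{2,q}-U_{1,q}\in\mathcal{P}^{\circ}$ follows from $0<\lambda_{-1}(a,\alpha)<\lambda_{+1}(a,\alpha)$: the ratio of scale factors $(\lambda_{+1}/\lambda_{-1})^{1/(1-q)}$ tends to $+\infty$, dominating the positive continuous profile $(t_-\phi_{-1})/(t_+\phi_{+1})$ uniformly on $\overline{\Omega}$.

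For part (ii), the lower bound $\liminf_{q\to 1^-}\alpha_s(a,q)\geq\beta_0$ is immediate from part (i): for any $\alpha\in(0,\beta_0)$, $(P_\alpha)$ admits a solution in $\mathcal{P}^{\circ}$ for $q$ close enough to $1$, so $\alpha_s(a,q)\geq\alpha$ for such $q$, and letting $\alpha\to\beta_0^-$ concludes. For the upper bound, suppose for contradiction that there exist $q_n\to 1^-$ and $\alpha_n\in(0,\alpha_s(a,q_n))$ with $\alpha_n\to\alpha_\infty>\beta_0$ and corresponding solutions $u_n\in\mathcal{P}^{\circ}$ of $(P_{\alpha_n,q_n})$; one may take $u_n=u_{1,\alpha_n}$ from Theorem \ref{mthm2}(i). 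Setting $m_n:=au_n^{q_n-1}$, the positive function $u_n$ is a principal eigenfunction for \eqref{epro01} with weight $m_n$ at $\alpha_n$ and $\lambda=1$, which forces $\alpha_n\leq\beta_0(m_n)$. Combining Proposition \ref{lbuN}(i), the a priori upper bound of \cite[Proposition 3.2]{KRQU2019}, and the asymptotics \eqref{eun} of $u_{\mathcal{N}}$, one secures uniform two-sided bounds $0<c\leq u_n\leq C$ on $\overline{\Omega}$. Consequently $u_n^{q_n-1}\to 1$ uniformly, $m_n\to a$ in $L^\infty(\Omega)$, and continuity of $\beta_0(\cdot)$ in the weight (see \cite[Theorem 5]{AB99}) delivers $\beta_0(m_n)\to\beta_0(a)=\beta_0$, contradicting $\alpha_\infty>\beta_0$.

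The main obstacle I anticipate is the uniform two-sided $C(\overline{\Omega})$ control on $u_n$ as $q_n\to 1^-$ in the upper bound of (ii): the natural lower bound $u_{\mathcal{N}}(a,q_n)$ from Proposition \ref{lbuN}(i) itself may degenerate to $0$ or to $+\infty$ depending on $\sigma_1^{\mathcal{N}}(a)$, so one likely needs to restrict to the minimal branch $u_{1,\alpha_n}$ and combine it with $q$-uniform a priori estimates, while also verifying that $\beta_0$ depends continuously on the weight in the relevant topology.
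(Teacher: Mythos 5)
Your part (i) is essentially the paper's own proof: the same rescaling $v=\lambda_{+1}(a,\alpha)^{1/(1-q)}u$, the same observation that $\tilde a=\lambda_{+1}(a,\alpha)\,a$ satisfies $\lambda_{+1}(\tilde a,\alpha)=1$ while $\phi_{\pm1}$ and $t_{\pm}$ are invariant under positive scaling of the weight, and the same reading-off of cases (a)--(c) from $\lambda_{+1}^{1/(q-1)}$. Your extra remark deducing $U_{2,q}-U_{1,q}\in\mathcal{P}^{\circ}$ from $(\lambda_{+1}/\lambda_{-1})^{1/(1-q)}\to\infty$ is a welcome addition, since the paper's displayed proof does not justify that ordering. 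The lower bound $\beta_{0}\leq\varliminf_{q\to1^{-}}\alpha_{s}(q)$ in part (ii) also coincides with the paper's argument.

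The upper bound in part (ii), however, has a genuine gap, precisely where you flagged it. Your argument hinges on uniform two-sided bounds $0<c\leq u_{n}\leq C$ on $\overline{\Omega}$ so that $m_{n}=au_{n}^{q_{n}-1}\to a$ in $L^{\infty}(\Omega)$; but such bounds are false in general. Part (i) itself shows that solutions in $\mathcal{P}^{\circ}$ behave like $\lambda_{\pm1}^{-1/(1-q)}$ times a fixed positive profile, hence tend uniformly to $0$ when $\lambda_{+1}(a,\alpha)>1$ and to $+\infty$ when $\lambda_{-1}(a,\alpha)<1$; likewise the lower bound $u_{n}\geq u_{\mathcal{N}}(a,q_{n})$ from Proposition \ref{lbuN}(i) degenerates, since by \eqref{eun} one has $u_{\mathcal{N}}(a,q_{n})\simeq \sigma_{1}^{\mathcal{N}}(a)^{-1/(1-q_{n})}t_{*}\phi_{1}$, and the a priori bound of \cite[Proposition 3.2]{KRQU2019} is for fixed $q$ and is not claimed to be uniform as $q\to1^{-}$. (A secondary issue: you would also need continuity of $m\mapsto\beta_{0}(m)$, which \cite[Theorem 5]{AB99} does not provide, although it is plausible from the variational characterization of $\beta_{0}$.) The paper circumvents all of this with a compactness argument in $H^{1}(\Omega)$: when $\Vert u_{n}\Vert_{H^{1}}\to\infty$ it normalizes $v_{n}=u_{n}/\Vert u_{n}\Vert$ and passes to the limit problem $-\Delta\hat v=\hat b\,a\hat v$, $\partial_{\nu}\hat v=\hat\alpha\hat v$ with $\hat b=\lim\Vert u_{n}\Vert^{q_{n}-1}\in[0,1]$, proving $\hat v\not\equiv0$ by upgrading to strong $H^{1}$ convergence; in the bounded case it secures $\hat u\not\equiv0$ via a \emph{local} lower bound $u_{n}>M$ on a small ball $B'\Subset B\Subset\Omega_{+}^{a}$, obtained after first rescaling the weight so that $\sigma_{1}^{\mathcal{D}}(a)<1$ and invoking \cite[Lemma 2.5]{KRQU16}. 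In either case the contradiction comes from the nonexistence of principal eigenvalues of \eqref{epro01} for $\alpha>\beta_{0}$. To repair your proof you would need to replace the uniform $C(\overline{\Omega})$ control by some such normalization-plus-local-positivity device.
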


\begin{proof} \strut
\begin{enumerate}
\item Set $v:=\lambda_{+1}(a,\alpha)^{\frac{1}{1-q}}u$. Note that if $u$ solves $(P_{q})$ then $v$ solves $(P_{q})$ with $a$ replaced by $\tilde{a}:=\lambda_{+1}(a,\alpha)a$. Indeed,
\[
-\Delta v=\lambda_{+1}(a,\alpha)^{\frac{1}{1-q}}a\left(  x\right)
u^{q}=\lambda_{+1}(a,\alpha) \, a(x)v^{q}
=\tilde{a}\left(  x\right) v^{q}.
\]
Moreover, we easily see that $\lambda_{+1}(\tilde{a},\alpha)=1$.
By 
Proposition \ref{t1}, we get a solution $v_{1,q}\in\mathcal{P}^{\circ}$ of $(P_{q})$ with
$\tilde{a}$ such that $v_{1,q}\rightarrow t_{+}(\tilde{a})\phi_{+1}(\tilde{a})$
as $q\rightarrow1^{-}$. But it is easily seen that $\phi_{+1}(\tilde{a}%
)=\phi_{+1}(a)$ and $t_{+}(\tilde{a})=t_{+}(a)$.
Thus we obtain a solution $U_{1,q}=\lambda_{+1}(a,\alpha)^{\frac{1}{q-1}%
}v_{1,q}\in\mathcal{P}^{\circ}$ of $(P_{q})$ for $q$ close to $1$ such that
\[
\lambda_{+1}(a,\alpha)^{\frac{1}{1-q}}U_{1,q}\longrightarrow t_{+}(a)\phi_{+1}(a)\quad\mbox{ as }\ q\rightarrow1^{-}.
\]
In particular, we see that if $\lambda_{+1}(a,\alpha)>1$ then $\lambda_{+1}(a,\alpha)^{\frac{1}{q-1}}\rightarrow0$, so that $U_{1,q}\rightarrow0$ in
$C^{2+r}(\overline{\Omega})$ as $q\rightarrow1^{-}$. On the other hand, if
$\lambda_{+1}(a,\alpha)<1$, then $\lambda_{+1}(a,\alpha)^{\frac{1}{q-1}%
}\rightarrow\infty$, so that $\displaystyle\min_{\overline{\Omega}}%
U_{1,q}\rightarrow\infty$ when $q\rightarrow1^{-}$. A similar argument with
$\lambda_{-1}$ instead of $\lambda_{+1}$ provides a solution $U_{2,q}
=\lambda_{-1}(a,\alpha)^{\frac{1}{q-1}}v_{2,q}\in\mathcal{P}^{\circ}$ with
$v_{2,q}\rightarrow t_{-}(a)\phi_{-1}(a)$.
\item Take 
a ball $B\Subset\Omega_{+}^{a}$, and choose $c>0$ large enough so
that $\sigma_{1}^{\mathcal{D}}\left(  ca\right)  <1$, where $\sigma_{1}^{\mathcal{D}}\left(
a\right)  $ denotes the unique positive principal eigenvalue with respect to
the weight $a$ in $B$, under homogeneous Dirichlet boundary conditions.
Note that $\beta_{0}(ca)=\beta_{0}(a)$ and, by a rescaling argument,
$\alpha_{s}(q,ca)=\alpha_{s}(q,a)$ for any $c>0$. So, we may assume that
$\sigma_{1}^{\mathcal{D}}\left(  a\right)  <1$.
%
Since $q\in\mathcal{A}_{\mathcal{N}}$ for $q$ close to $1$, we note from
\cite[Proposition 3.4]{KRQU2019} and \eqref{eun} that
$\alpha_{s}(q)<\frac{-2\sigma_{1}^{\mathcal{N}}(a)\int_{\Omega}a}{|\partial\Omega|}$ as
$q\rightarrow1^{-}$, so $\varlimsup_{q\rightarrow1^{-}}\alpha_{s}(q)<\infty$.
First we prove that
\[
\beta_{0}\leq\varliminf_{q\rightarrow1^{-}}\alpha_{s}(q).
\]
Assume by contradiction that $\varliminf_{q}\alpha_{s}(q)<\beta_{0}$. Then
there exist $q_{n}\nearrow1$ and $\varepsilon>0$ such that $\alpha_{s}%
(q_{n})<\beta_{0}-\varepsilon$. By item (i), we can choose $q_{0}\in(0,1)$
such that $(P_{a,q,\beta_{0}-\varepsilon})$ has a solution in $\mathcal{P}%
^{\circ}$ for every $q\in(q_{0},1)$. In particular, this remains valid for
$q_{n}$, which contradicts the definition of $\alpha_{s}(q_{n})$.
Next we prove that
\begin{equation}
\varlimsup_{q\rightarrow1^{-}}\alpha_{s}(q)\leq\beta_{0}. \label{limsupals}%
\end{equation}
Assume by contradiction that $\beta_{0}<\varlimsup_{q}\alpha_{s}(q)$. Then,
there exist $q_{n}\nearrow1$ and $\varepsilon>0$ such that $\beta
_{0}+\varepsilon\leq\alpha_{s}(q_{n})$. Put $\alpha_{n}:=\alpha_{s}(q_{n})$,
and fix $n$. Let us verify that $(P_{a,q_{n},\alpha_{n}})$ has a solution
$u_{n}\in\mathcal{P}^{\circ}$. By the definition of $\alpha_{n}$, there exist
$\alpha_{i,n}\nearrow\alpha_{n}$ and $u_{i,n}\in\mathcal{P}^{\circ}$ such
that
\[%
\begin{cases}
-\Delta u_{i,n}=a(x)u_{i,n}^{q_{n}} & \mbox{ in }\Omega,\\
\partial_{\nu}u_{i,n}=\alpha_{i,n}u_{i,n} & \mbox{ on }\partial\Omega.
\end{cases}
\]
In view of the fact that (for all $n$ large) $q_{n}\in\mathcal{A}_{\mathcal{N}}%
\subseteq\mathcal{I}_{\mathcal{N}}$, Proposition \ref{lbuN}(i) is applicable to
$u_{i,n}$, and then, $u_{i,n}\geq u_{\mathcal{N}}$. So, using \cite[Proposition
3.2]{KRQU2019}, we may deduce the existence of $u_{n}\in\mathcal{P}^{\circ}$
such that $u_{i,n}\rightarrow u_{n}$ in $C^{1}(\overline{\Omega})$ and  
$u_{n}\geq u_{\mathcal{N}}$, and thus, $u_{n}$ is a desired solution.
We assert that $\Vert u_{n}\Vert:=\Vert u_{n}\Vert_{H^{1}(\Omega)}$ is
bounded. Assume to the contrary that $\Vert u_{n}\Vert\rightarrow\infty$ and
set $v_{n}:=\frac{u_{n}}{\Vert u_{n}\Vert}$. Then $\Vert v_{n}\Vert=1$ and, up
to a subsequence, we get some $\hat{v}\in H^{1}(\Omega)$ such that
$v_{n}\rightharpoonup\hat{v}$ in $H^{1}(\Omega)$, $v_{n}\rightarrow\hat{v}$ in
$L^{2}(\Omega)$ and in $L^{2}(\partial\Omega)$, and $v_{n}\rightarrow\hat{v}$
a.e. From
\begin{equation}
\int_{\Omega}\nabla v_{n}\nabla w=\left(  \int_{\Omega}av_{n}^{q_{n}}w\right)
\Vert u_{n}\Vert^{q_{n}-1}+\alpha_{n}\int_{\partial\Omega}v_{n}w,\quad\forall
w\in H^{1}(\Omega), \label{nw}%
\end{equation}
it follows by passing to the limit that
\begin{equation}
\int_{\Omega}\nabla\hat{v}\nabla w=\hat{b}\int_{\Omega}a\hat{v}w+\hat{\alpha
}\int_{\partial\Omega}\hat{v}w, \label{hatw}%
\end{equation}
where $\hat{b}:=\lim\Vert u_{n}\Vert^{q_{n}-1}\in\lbrack0,1]$ and $\hat
{\alpha}:=\lim\alpha_{n}\in\lbrack\beta_{0}+\varepsilon,\infty)$ (taking a
suitable subsequence, since $\alpha_{n}$ has an upper bound). In addition, we
note that $\int_{\Omega}av_{n}^{q_{n}}w\rightarrow\int_{\Omega}a\hat{v}w$ by
Lebesgue's dominated convergence theorem.
Assertion \eqref{hatw} means that, in weak sense,
\begin{equation}%
\begin{cases}
-\Delta\hat{v}=\hat{b}a(x)\hat{v} & \mbox{ in }\Omega,\\
\partial_{\nu}\hat{v}=\hat{\alpha}\hat{v} & \mbox{ on }\partial\Omega.
\end{cases}
\label{hatv}%
\end{equation}
Moreover, $\hat{v}\not \equiv 0$.
Indeed, using \eqref{nw} and \eqref{hatw}, we deduce that $\int_{\Omega
}|\nabla(v_{n}-\hat{v})|^{2}\rightarrow0$, so $v_{n}\rightarrow\hat{v}$ in
$H^{1}(\Omega)$. Consequently, $\Vert\hat{v}\Vert=1$, as desired. Hence,
$\hat{b}$ is a principal eigenvalue of \eqref{hatv}, but this problem has no
principal eigenvalue since $\beta_{0}+\varepsilon\leq\hat{\alpha}$.
Therefore $\Vert u_{n}\Vert$ is bounded and we may assume that $u_{n}%
\rightharpoonup\hat{u}$ in $H^{1}(\Omega)$, $u_{n}\rightarrow\hat{u}$ in
$L^{2}(\Omega)$ and in $L^{2}(\partial\Omega)$, and $u_{n}\rightarrow\hat{u}$
a.e. From
\[
\int_{\Omega}\nabla u_{n}\nabla w=\int_{\Omega}au_{n}^{q_{n}}w+\alpha_{n}%
\int_{\partial\Omega}u_{n}w,\quad\forall w\in H^{1}(\Omega),
\]
it follows that
\[
\int_{\Omega}\nabla\hat{u}\nabla w=\int_{\Omega}a\hat{u}w+\hat{\alpha}%
\int_{\partial\Omega}\hat{u}w,
\]
meaning that $\hat{u}$ is a weak solution of%
\begin{equation}%
\begin{cases}
-\Delta\hat{u}=a\hat{u} & \mbox{ in }\Omega,\\
\partial_{\nu}\hat{u}=\hat{\alpha}\hat{u} & \mbox{ on }\partial\Omega.
\end{cases}
\label{hatu}%
\end{equation}
Moreover, we can deduce $\hat{u}\not \equiv 0$. Indeed, since $u_{n}>0$ in $B$
and $\sigma_{1}^{\mathcal{D}}(a)<1$, \cite[Lemma 2.5]{KRQU16} provides us with a ball
$B^{\prime}\Subset B$ and $M>0$ such that $u_{n}>M$ in $B^{\prime}$ for all
$n$. The condition $u_{n}\rightarrow\hat{u}$ a.e.\ gives the desired assertion
as $n\rightarrow\infty$.
Consequently, $1$ is a principal eigenvalue of \eqref{hatu}, and we reach once
again a contradiction with $\beta_{0}+\varepsilon\leq\hat{\alpha}$. We have
proved \eqref{limsupals}, which concludes the proof.
\end{enumerate}
\end{proof}

\begin{cor}
\label{cor:asympt} Whenever $u_{1,\alpha}$ and $u_{2,\alpha}$ are the only
solutions in $\mathcal{P}^{\circ}$ for $\alpha\in(0,\beta_{0})$ and $q$ close
to $1$, we have $u_{1,\alpha}=U_{1,q}$ and $u_{2,\alpha}=U_{2,q}$, i.e.
\[
u_{1,\alpha}(q)\sim\lambda_{+1}^{-\frac{1}{1-q}}\,t_{+}\,\phi_{+1},\quad
u_{2,\alpha}(q)\sim\lambda_{-1}^{-\frac{1}{1-q}}\,t_{-}\,\phi_{-1}%
\quad\mbox{ for }\ q\sim1.
\]
In particular, this holds for the weights $a$ built in Section \ref{sec:exa}.
\end{cor}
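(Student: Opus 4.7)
The plan is to combine the bifurcation construction behind Theorem \ref{tt2} with the assumed uniqueness of solutions in $\mathcal{P}^{\circ}$ in order to force an identification. I fix $\alpha\in(0,\beta_{0})$ and restrict $q$ to a left neighborhood of $1$ small enough that both the conclusions of Theorem \ref{tt2}(i) are available and, by hypothesis, $u_{1,\alpha}$ and $u_{2,\alpha}$ are the only two solutions of $(P_{\alpha})$ in $\mathcal{P}^{\circ}$.

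First, I would invoke Theorem \ref{tt2}(i) to produce $U_{1,q},U_{2,q}\in\mathcal{P}^{\circ}$ satisfying $U_{2,q}-U_{1,q}\in\mathcal{P}^{\circ}$, together with the stated asymptotic profiles $\lambda_{\pm1}^{1/(1-q)}U_{i,q}\to t_{\pm}\phi_{\pm1}$ in $C^{2+r}(\overline{\Omega})$. Since the hypothesis supplies the exhaustive list $\{u_{1,\alpha},u_{2,\alpha}\}$ of solutions in $\mathcal{P}^{\circ}$, set-theoretic equality gives $\{U_{1,q},U_{2,q}\}=\{u_{1,\alpha},u_{2,\alpha}\}$. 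Both pairs are strictly ordered in the same direction: $U_{2,q}>U_{1,q}$ by Theorem \ref{tt2}(i), and $u_{2,\alpha}>u_{1,\alpha}$ by Theorem \ref{mthm2}(ii)(b), extended to the relevant range of $\alpha$ via the exact structure of $\mathcal{C}_{1}',\mathcal{C}_{2}$ provided in the proof of Theorem \ref{ball}(i) (no crossings occur along the curves). Matching the orderings pins down $U_{1,q}=u_{1,\alpha}$ and $U_{2,q}=u_{2,\alpha}$, and the claimed equivalences transfer from Theorem \ref{tt2}(i) to $u_{1,\alpha},u_{2,\alpha}$.

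For the final \emph{in particular} clause, I would check that the weights $a$ built in Section \ref{sec:exa} do trigger the uniqueness hypothesis. Each such weight satisfies \eqref{hip}, so Theorem \ref{ball}(i) yields exactly two solutions $u_{1,\alpha},u_{2,\alpha}$ in $\mathcal{P}^{\circ}$ for every $\alpha\in(0,\alpha_{s})$. Combined with Theorem \ref{tt2}(ii), namely $\alpha_{s}(a,q)\to\beta_{0}$ as $q\to1^{-}$, this covers every $\alpha\in(0,\beta_{0})$ provided $q$ is sufficiently close to $1$, so the hypothesis of the corollary holds and the asymptotic formulas apply to these weights.

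The only subtle point I foresee is the justification that $u_{2,\alpha}-u_{1,\alpha}\in\mathcal{P}^{\circ}$ on the whole range $\alpha\in(0,\beta_{0})$ under consideration rather than merely for $\alpha$ small as in Theorem \ref{mthm2}(ii)(b). This is not really an obstacle: in the regime where the \emph{in particular} clause is invoked, the exact multiplicity assertion of Theorem \ref{ball}(i) together with continuity of the curves $\mathcal{C}_{1}'$ and $\mathcal{C}_{2}$ prevents any intersection, so strict ordering propagates from small $\alpha$ up to (but not including) $\alpha_{s}$. With this in hand, the identification is purely a counting argument, and no further analytic work is required beyond quoting Theorems \ref{tt2} and \ref{ball}.
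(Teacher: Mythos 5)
Your argument is correct and is essentially the intended one: the paper states the corollary without proof as an immediate consequence of Theorem \ref{tt2}, and the counting-plus-ordering identification you give (two solutions produced by Theorem \ref{tt2}(i), an exhaustive two-element solution set by hypothesis, hence set equality, then matching by order) is exactly what is implied. The one place you work harder than necessary is the propagation of the strict ordering $u_{2,\alpha}-u_{1,\alpha}\in\mathcal{P}^{\circ}$ beyond small $\alpha$: the minimality of $u_{1,\alpha}$ among solutions in $\mathcal{P}^{\circ}$ (Theorem \ref{mthm2}(i)(b), valid on all of $[0,\alpha_{s})$) already forces $u_{1,\alpha}=U_{1,q}$ once $\{U_{1,q},U_{2,q}\}=\{u_{1,\alpha},u_{2,\alpha}\}$ with $U_{2,q}-U_{1,q}\in\mathcal{P}^{\circ}$, so the no-crossing analysis along $\mathcal{C}_{1}'$ and $\mathcal{C}_{2}$ can be dispensed with.
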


\begin{rem}
\strut

\begin{enumerate}
\item In view of Remark \ref{r1}(i), we infer from Theorem \ref{tt2}(ii) that
Theorem \ref{ball} holds also if, instead of \eqref{hip}, we assume that
$\beta_{0}(a)<\alpha_{2}$ and $q$ is close enough to $1$.

\item The following characterization can be established:
\[
\beta_{0}(a)=\inf\left\{  \int_{\Omega}|\nabla\phi|^{2}:\phi\in H^{1}%
(\Omega),\ \int_{\Omega}a(x)\phi^{2}=0,\int_{\partial\Omega}\phi
^{2}=1\right\}  .
\]
One can easily show that this infimum is achieved. In addition, if $\{a_{k}\}$
is a sequence of sign-changing weights satisfying $(A.1)$ and $a_{k}%
\rightarrow a$ in $C^{\theta}(\overline{\Omega})$, where $a$ is a negative
weight such that $a<0$ on $\partial\Omega$, then $\beta_{0}(a_{k}%
)\rightarrow\infty$. This result, combined with Theorem \ref{tt2}(ii), shows
that $\alpha_{s}(a_{k},q)$ can be made arbitrarily large by letting
$k\rightarrow\infty$ and $q\rightarrow1^{-}$.
\end{enumerate}
\end{rem}

\section{The solution set structure of $(S_{\alpha})$}

\label{sec:S}

Next we provide a description of the nontrivial solution set of $(S_{\alpha})$
with $\alpha\geq0$, which can be established proceeding in a similar way as
for $(P_{\alpha})$. We note that $(S_{\alpha})$ was mainly investigated by
Alama \cite{alama}, under some conditions on $a$ very similar to $(A.0)$ and
$(A.1)$, which are assumed in this Section.

The notion of stability for solutions of $(S_{\alpha})$ in $\mathcal{P}%
^{\circ}$ can be easily adapted from the one introduced in Section
\ref{sec:pr1-2} by considering, instead of $\gamma_{1}(\alpha,u)$, the first
eigenvalue of the problem
\[%
\begin{cases}
-\Delta\phi-(\alpha+qa(x)u^{q-1})\phi=\gamma(\alpha,u)\phi &
\mbox{in $\Omega$},\\
\partial_{\nu}\phi=0 & \mbox{on $\partial \Omega$}.
\end{cases}
\]
Let us set
\[
\tilde{\alpha}_{s}=\tilde{\alpha}_{s}(a,q):=\sup\{\alpha\geq
0:\mbox{$(S_\alpha)$ has a solution in $\mathcal{P}^\circ$}\}.
\]
Arguing as in \cite[Proposition 2.2]{alama}, one may find some $C>0$ such that
$\tilde{\alpha}_{s}(a,q)<C$ for any $q\in(0,1)$. This result contrasts with
the corresponding one for $(P_{\alpha})$, where the condition `$0\not \equiv
a\geq0$ in some smooth domain $D\subset\Omega$ such that $\left\vert \partial
D\cap\partial\Omega\right\vert >0$' is assumed, cf. \cite[Proposition
3.6]{KRQU2019}. Thus this assumption is
\textit{not} needed in item (iv) of the following result:

\begin{theorem}
\label{mthms}Assume $(A.0),$ $(A.1)$, and $q\in\mathcal{I}_{\mathcal{N}}$. Then:

\begin{enumerate}
\item $(S_{\alpha})$
has a solution curve $\mathcal{C}_{1}=\left\{  (\alpha,u_{1,\alpha}%
);0\leq\alpha\leq\tilde{\alpha}_{s}\right\}  $ such that $\alpha\mapsto
u_{1,\alpha}\in\mathcal{P}^{\circ}$ is continuous and increasing on
$[0,\tilde{\alpha}_{s}]$ and $C^{\infty}$ in $[0,\tilde{\alpha}_{s})$, with
$u_{1,0}=u_{\mathcal{N}}$, and $u_{1,\tilde{\alpha}_{s}}$ is the unique
solution of $(S_{\tilde{\alpha}_{s}})$ in $\mathcal{P}^{\circ}$. Moreover,
$\mathcal{C}_{1}$ is extended to a $C^{\infty}$ curve, say $\mathcal{C}%
_{1}^{\prime}$ ($\supset\mathcal{C}_{1}$), bending \textrm{to the left} in a
neighborhood of $(\tilde{\alpha}_{s},u_{1,\tilde{\alpha} _{s}})$.

\item $(S_{\alpha})$ has a solution curve $\mathcal{C}_{2}=\left\{
(\alpha,u_{2,\alpha});0<\alpha\leq\overline{\alpha}\right\}  $ for some
$\overline{\alpha}\in(0,\tilde{\alpha}_{s}]$, such that $\alpha\mapsto
u_{2,\alpha}\in\mathcal{P}^{\circ}$ is continuous and decreasing on
$(0,\overline{\alpha}]$ and $C^{\infty}$ in $(0,\overline{\alpha})$, with
$\min_{\overline{\Omega}}u_{2,\alpha}\rightarrow\infty$ as $\alpha
\rightarrow0^{+}$. Moreover:

\begin{enumerate}
\item for any interior point $(\alpha,u)\in\mathcal{C}_{1}^{\prime}%
\cup\mathcal{C}_{2}$, the solutions set of $(S_{\alpha})$ in a neighborhood of
$(\alpha,u)$ is given exactly by $\mathcal{C}_{1}^{\prime}\cup\mathcal{C}_{2}$ ;

\item for every $\alpha\in(0,\overline{\alpha})$ the solutions $u_{1,\alpha
},u_{2,\alpha}$ are strictly ordered
by $u_{2,\alpha}-u_{1,\alpha}\in\mathcal{P}^{\circ}$, and these ones are the
only solutions of $(S_{\alpha})$ in $\mathcal{P}^{\circ}$ for $\alpha>0$ small.
\end{enumerate}

\item $\mathcal{C}_{1}^{\prime}$ is connected to $\mathcal{C}_{2}$ by a
component (i.e., a maximal closed, connected subset) $\mathcal{C}_{\ast}$ of
solutions of $(S_{\alpha})$ in
$[0,\tilde{\alpha}_{s}]\times\mathcal{P}^{\circ}$, see Figure
\ref{fig19_0422compo}(i).
\end{enumerate}
\end{theorem}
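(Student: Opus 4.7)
The plan is to transcribe the entire machinery used for $(P_{\alpha})$ in Section \ref{sec:pr1-2}, substituting at each step the linearized eigenvalue problem of $(P_{\alpha})$ (with boundary parameter $\alpha$) by the one given for $(S_{\alpha})$ in Section \ref{sec:S} (with interior parameter $\alpha$). Concretely, I would first establish the analog of Proposition \ref{lbuN}: any supersolution of $(S_{\alpha})$ that is positive in $\Omega_{+}^{a}$ dominates $u_{\mathcal{N}}$ (the construction using Dirichlet eigenfunctions on balls $B_{k}\Subset\Omega_{k}$ as subsolutions carries over verbatim, since the linear term $\alpha u\ge 0$ only helps), and for $\alpha>0$ small there exist exactly two solutions in $\mathcal{P}^{\circ}$, namely $u_{1,\alpha}$ obtained from $u_{\mathcal{N}}$ via the IFT (using $\gamma_{1}(0,u_{\mathcal{N}})>0$) and a blow-up branch $u_{2,\alpha}$. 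The latter is produced by rescaling: set $w=\alpha^{1/(1-q)}u$, which turns $(S_{\alpha})$ into $-\Delta w=\alpha\bigl(w+a(x)w^{q}\bigr)$ with Neumann boundary condition, a problem whose $\alpha=0$ limit is solved by the positive constant $\tilde c_{a}:=(-\int_{\Omega}a/|\Omega|)^{1/(1-q)}$ (replacing $c_{a}$); a Lyapunov--Schmidt reduction analogous to \cite[Proposition 3.11]{KRQU2019} then produces a smooth branch $w(\alpha)$ with $w(0)=\tilde c_{a}$, and $u_{2,\alpha}=\alpha^{-1/(1-q)}w(\alpha)$ blows up as $\alpha\to 0^{+}$.

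Next, I would reproduce Proposition \ref{monoton} (using the same differentiation in $\alpha$ of $u_{2,\alpha}=\alpha^{-1/(1-q)}w(\alpha)$ to get monotonicity, and testing the linearized equation by $1/\phi_{1}$ to show that $\gamma_{1}(\alpha,u_{2,\alpha})/\alpha\to -(1-q)$ as $\alpha\to 0^{+}$, hence instability). Proposition \ref{prop:bend} (bending to the left at a degenerate point with $\gamma_{1}=0$) transfers directly: the Crandall--Rabinowitz machinery only requires checking $F_{\alpha}(\alpha_{0},u_{0})\notin \mathrm{Im}\,F_{u}(\alpha_{0},u_{0})$, which now reduces to $\int_{\Omega}u_{0}\phi_{1}>0$ (instead of the boundary integral), and the second-derivative computation yields the same sign by testing with $\phi_{1}^{3}/u_{0}^{2}$ and $\phi_{1}^{2}/u_{0}$. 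These then give, via the abstract sub-supersolution argument of \cite[Proposition 7.8]{Am76}, the analog of Proposition \ref{unstable} and Proposition \ref{ast:es}, yielding (i), (ii) and the local exactness in (ii)(a)--(b).

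For item (iii), the key observation is the remark already made in the excerpt: by \cite[Proposition 2.2]{alama}, $\tilde\alpha_{s}<\infty$ holds unconditionally for $(S_{\alpha})$, without needing the hypothesis ``$a\ge 0$ on a subdomain touching $\partial\Omega$'' that was required for $(P_{\alpha})$ in Theorem \ref{mthm}(iii). Consequently, the component argument in Proposition \ref{prop:compo:wthoutA2} simplifies: from the rescaled problem for $w$ and global bifurcation from $(0,0)$ and $(0,\tilde c_{a})$ (in the space of constants), one obtains a subcontinuum $\gamma_{0}$ of solutions meeting the trivial line only at these two points. Undoing $u=\alpha^{-1/(1-q)}w$ produces a connected set $\mathcal{C}_{+}$ of solutions of $(S_{\alpha})$ containing $\mathcal{C}_{2}$, and the openness/closedness argument (via the lower bound $u\ge u_{\mathcal{N}}$ on $\mathcal{P}^{\circ}$) shows $\mathcal{C}_{+}\subset[0,\tilde\alpha_{s}]\times\mathcal{P}^{\circ}$. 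Combining with $\mathcal{C}_{1}'$ and $\mathcal{C}_{3}$ via Proposition \ref{prop:bend} and Proposition \ref{ast:es} gives the desired component $\mathcal{C}_{\ast}$ satisfying the analog of \eqref{compoCast}.

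The main obstacle I anticipate is the careful verification that the Lyapunov--Schmidt reduction and the a priori estimates of \cite[Propositions 3.2, 3.10, 3.11]{KRQU2019} (established for the boundary-parameter problem $(R_{\alpha})$) transfer to the interior-parameter rescaled problem for $(S_{\alpha})$; these are routine but require re-doing the boundary versus interior integrations by parts at every step. Once this is in place, all the subsequent propositions go through with only cosmetic changes, since every use of the boundary integral $\int_{\partial\Omega}u_{0}\phi_{1}$ is replaced by the interior integral $\int_{\Omega}u_{0}\phi_{1}$, and the latter is positive for the same reason ($u_{0},\phi_{1}\in\mathcal{P}^{\circ}$).
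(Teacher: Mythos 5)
Your proposal is correct and follows essentially the same route as the paper, which itself only sketches the proof of Theorem \ref{mthms} by asserting that the machinery of Section \ref{sec:pr1-2} transfers to $(S_{\alpha})$ once the linearization is replaced by the Neumann one and the boundary integrals $\int_{\partial\Omega}u_{0}\phi_{1}$ by $\int_{\Omega}u_{0}\phi_{1}$. You also correctly identify the one genuine structural difference, namely that the unconditional bound $\tilde{\alpha}_{s}<C$ from Alama's argument removes the hypothesis on $a$ near $\partial\Omega$ in item (iii), which is precisely the point the paper singles out.
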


We turn now to the analogue of Theorem \ref{ball}. We denote by $\tilde
{\alpha}_{2}=\tilde{\alpha} _{2}(\Omega)$ the second (and first nontrivial)
eigenvalue of the Neumann problem%
\[%
\begin{cases}
-\Delta\phi=\alpha\phi & \mbox{ in }\Omega,\\
\partial_{\nu}\phi=0 & \mbox{ on }\partial\Omega.
\end{cases}
\]

\begin{theorem}
Assume $(A.0),$ $(A.1)$, $q\in\mathcal{I}_{\mathcal{N}}$ and
\begin{equation}
\frac{-\int_{\Omega}a}{\int_{\Omega}u_{\mathcal{N}}^{1-q}}\leq\tilde{\alpha
}_{2}. \label{hip-s}%
\end{equation}
Then the solutions set in $\mathcal{P}^{\circ}$ of $(S_{\alpha})$ with
$\alpha\geq0$ consists of $\mathcal{C}_{1}^{\prime}$, where the upper curve of
$\mathcal{C}_{1}^{\prime}$ is given exactly by $\mathcal{C}_{2}$ with
$\overline{\alpha}=\tilde{\alpha}_{s}$. In particular, $\left(  S_{\alpha
}\right)  $ has exactly two solutions in $\mathcal{P}^{\circ}$ for all
$\alpha\in(0,\tilde{\alpha}_{s})$.
\end{theorem}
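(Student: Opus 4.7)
The approach is to mirror the proof of Theorem~\ref{ball} step by step, with all Steklov objects replaced by their Neumann counterparts: the boundary integral $\int_{\partial\Omega}$ by the interior integral $\int_{\Omega}$, the Robin boundary parameter by the interior parameter in $(S_{\alpha})$, and the Steklov spectrum by the Neumann spectrum. The four ingredients to establish are: (a) the a priori bound $\tilde{\alpha}_{s}\leq\tilde{\alpha}_{2}$; (b) applicability of the implicit function theorem (IFT) at every $(\alpha,u)\in(0,\tilde{\alpha}_{s})\times\mathcal{P}^{\circ}$ solving $(S_{\alpha})$; (c) extension of the curve $\mathcal{C}_{2}$ of Theorem~\ref{mthms}(ii) up to $\overline{\alpha}=\tilde{\alpha}_{s}$, so that it meets $\mathcal{C}_{1}^{\prime}$ at $u_{1,\tilde{\alpha}_{s}}$; and (d) uniqueness: every positive solution not on $\mathcal{C}_{1}^{\prime}$ coincides with some $u_{2,\alpha}\in\mathcal{C}_{2}$.

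For (a), I would adapt \cite[Proposition~3.4]{KRQU2019} to the Neumann setting by testing $(S_{\alpha})$ against $u_{\mathcal{N}}^{1-q}$ and combining with the equation satisfied by $u_{\mathcal{N}}$, using the lower bound $u\geq u_{\mathcal{N}}$ (the Neumann analog of Proposition~\ref{lbuN}(i), provable by the same sub/super-solution argument since that proof does not really use the Robin condition). This should produce $\tilde{\alpha}_{s}\leq -\int_{\Omega}a\,/\int_{\Omega}u_{\mathcal{N}}^{1-q}$, and then \eqref{hip-s} gives $\tilde{\alpha}_{s}\leq\tilde{\alpha}_{2}$.

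For (b), set $m:=au^{q-1}$ and consider the auxiliary Neumann eigenvalue problem
\[
-\Delta\phi-\alpha\phi=\lambda\,m(x)\phi\ \text{in}\ \Omega,\qquad \partial_{\nu}\phi=0\ \text{on}\ \partial\Omega.
\]
A direct computation shows that $\gamma_{k}(\alpha,u)=0$ iff $q$ is an eigenvalue of this problem. Now $u\in\mathcal{P}^{\circ}$ solves it with $\lambda=1$, so $\lambda=1$ is a principal eigenvalue; testing $(S_{\alpha})$ against $1/u$ yields $\int_{\Omega}m=-\alpha|\Omega|-\int_{\Omega}|\nabla u|^{2}/u^{2}<0$, hence both principal eigenvalues $\lambda_{1}^{\pm}$ exist with $0<\lambda_{1}^{-}\leq 1\leq\lambda_{1}^{+}$. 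The crucial point is to establish the Neumann analog of Proposition~\ref{prop:mu2}(iii): $\lambda_{2}^{-}(m,\alpha)<0$ for every sign-changing $m$ and $\alpha\in(0,\tilde{\alpha}_{2})$. This should follow by repeating the $(A,B)$-partition min-max argument of \cite{V,T} given in the excerpt's proof, replacing $\alpha\int_{\partial\Omega}\phi^{2}$ by $\alpha\int_{\Omega}\phi^{2}$ everywhere and using that the second Neumann eigenvalue of $-\Delta-\alpha I$ is $\tilde{\alpha}_{2}-\alpha$, positive exactly for $\alpha<\tilde{\alpha}_{2}$. Since $\alpha<\tilde{\alpha}_{s}\leq\tilde{\alpha}_{2}$ strictly, one obtains the full eigenvalue chain $\cdots\leq\lambda_{2}^{-}<0<\lambda_{1}^{-}\leq 1\leq\lambda_{1}^{+}<\lambda_{2}^{+}\leq\cdots$, so (exactly as in the Robin proof) $q\in(0,1)$ cannot equal any $\lambda_{j}^{\pm}$, and the IFT applies.

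For (c), take the maximal interval $(0,\alpha_{\max}]$ on which $\mathcal{C}_{2}$ is defined; the $L^{\infty}$ a priori bounds of \cite[Proposition~2.2]{alama} together with $u\geq u_{\mathcal{N}}$ furnish a limit $u_{\alpha_{\max}}\in\mathcal{P}^{\circ}$, at which the linearization is nondegenerate by (b); if $\alpha_{\max}<\tilde{\alpha}_{s}$ the IFT would extend $\mathcal{C}_{2}$ past $\alpha_{\max}$, a contradiction, hence $\alpha_{\max}=\tilde{\alpha}_{s}$ and the endpoint must coincide with $u_{1,\tilde{\alpha}_{s}}$ by the uniqueness assertion in Theorem~\ref{mthms}(i). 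For (d), any $v\in\mathcal{P}^{\circ}$ solving $(S_{\alpha})$ off $\mathcal{C}_{1}^{\prime}$ is unstable ($\gamma_{1}(\alpha,v)<0$) since $u_{1,\alpha}$ is the unique stable positive solution by the Neumann analog of Proposition~\ref{unstable}; applying the IFT backward from $(\alpha,v)$ the curve cannot terminate at any interior $\alpha>0$ without contradicting (b), so it must reach $\alpha=0$, and then the local exactness in Theorem~\ref{mthms}(ii)(b) forces $v=u_{2,\alpha}$. The main obstacle I anticipate is verifying the Neumann analog of Proposition~\ref{prop:mu2}(iii): the $(A,B)$-partition formalism of \cite{V,T} transfers, but since the first Neumann eigenvalue of $-\Delta$ is zero (constants), the shifted operator $-\Delta-\alpha I$ has a negative first eigenvalue for $\alpha>0$, and one must set up the min-max carefully to single out the second eigenvalue and track signs through the chain $\mu_{2}>0\Rightarrow\lambda_{2}^{-}<0$.
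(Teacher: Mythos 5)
Your proposal is correct and follows essentially the same route the paper takes: the paper gives no separate argument for this theorem, stating only that it "can be established proceeding in a similar way as for $(P_{\alpha})$", and your four steps (a)--(d) are a faithful transposition of the proof of Theorem \ref{ball}, with $\int_{\partial\Omega}$ replaced by $\int_{\Omega}$, the Steklov spectrum by the Neumann spectrum, and the linearization $\gamma_k=0$ correctly reinterpreted as $q$ being an eigenvalue of $-\Delta\phi-\alpha\phi=\lambda a u^{q-1}\phi$ with $\partial_\nu\phi=0$. The obstacle you flag in the Neumann analogue of Proposition \ref{prop:mu2}(iii) is not a real one, since the second Neumann eigenvalue of $-\Delta-\alpha I$ is simply $\tilde\alpha_2-\alpha$ and the $(A,B)$-partition min--max transfers verbatim (the first eigenvalue being negative for $\alpha>0$ already occurs in the Steklov case and plays no role there either).
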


Note that the quotient in the left-hand side of \eqref{hip-s} arises exactly
as the one in \eqref{hip} in the proof of \cite[Proposition 3.4]{KRQU2019}.
Furthermore, the quotient in \eqref{hip-s} is easier to handle (i.e. to make
it small) since the integral of $u_{\mathcal{N}}^{1-q}$ is computed over
$\Omega$, whereas in \eqref{hip} it is taken over $\partial\Omega$ in
\eqref{hip}. Examples of $a$ satisfying \eqref{hip-s} can be found proceeding
as in subsection 2.1.

\appendix

\section{Reduction to a fixed point equation in $C(\overline{\Omega})$}

\label{appen}

Let us formulate $(P_{\alpha})$ as a fixed point equation to which the sub and
supersolutions method \cite[Proposition 7.8]{Am76} applies. Note that
$C(\overline{\Omega})$ is equipped with the positive cone $\mathcal{P}%
:=\left\{  u\in C(\overline{\Omega}):u\geq0\mbox{ on }\overline{\Omega
}\right\}  $, whose interior is given by $\mathcal{P}^{\circ}$.

First we introduce the solution operators of some linear boundary value
problems associated with $(P_{\alpha})$. Let $c>0$ be a constant and
$\mathcal{K}_{\Omega}:C^{\theta}(\overline{\Omega})\rightarrow C^{2+\theta
}(\overline{\Omega})$ be the \textit{solution operator} of the problem
\[%
\begin{cases}
(-\Delta+c)u=g & \mbox{ in }\Omega,\\
\partial_{\nu}u=0 & \mbox{ on }\partial\Omega,
\end{cases}
\]
i.e.,
$\mathcal{K}_{\Omega}$ is a homeomorphism, and given $g\in C^{\theta
}(\overline{\Omega})$, $\mathcal{K}_{\Omega}g$ is the unique solution of the
problem above. It is well known \cite{Am76} that $\mathcal{K}_{\Omega}$ is
uniquely extendable to a compact linear mapping from $C(\overline{\Omega})$
into $C^{1}(\overline{\Omega})$, and moreover, it is \textit{strongly
positive}, i.e., $\mathcal{K}_{\Omega}g\in\mathcal{P}^{\circ}$ for any
$g\in\mathcal{P}\setminus\{0\}$. In a similar way, we denote by $\mathcal{K}%
_{\partial\Omega}:C^{1+\theta}(\partial\Omega)\rightarrow C^{2+\theta
}(\overline{\Omega})$ the \textit{solution operator} of the problem
\[%
\begin{cases}
(-\Delta+c)u=0 & \mbox{ in }\Omega,\\
\partial_{\nu}u=h & \mbox{ on }\partial\Omega,
\end{cases}
\]
i.e., $\mathcal{K}_{\partial\Omega}$ is a homeomorphism, and given $h\in
C^{1+\theta}(\partial\Omega)$, $\mathcal{K}_{\partial\Omega}h$ is the unique
solution of this problem. It is well known \cite{Am76N} that $\mathcal{K}%
_{\partial\Omega}$ is uniquely extendable to a bounded linear mapping from
$C(\partial\Omega)$ into $C^{\theta}(\overline{\Omega})$, which is
nonnegative. Using the usual trace $\tau:C(\overline{\Omega})\rightarrow
C(\partial\Omega)$, it follows that $\mathcal{K}_{\partial\Omega}\circ
\tau:C(\overline{\Omega})\rightarrow C(\overline{\Omega})$ is compact and
nonnegative. In the sequel, $\mathcal{K}_{\partial\Omega}\circ\tau$ is still
denoted by $\mathcal{K}_{\partial\Omega}$.

Summing up, given $a\in C^{\theta}(\overline{\Omega})$ with $\theta\in(0,1)$,
$u\in C^{2+\theta}(\overline{\Omega})\cap\mathcal{P}^{\circ}$ is a solution of
$(P_{\alpha})$ if and only if $u\in\mathcal{P}^{\circ}$ solves the fixed point
equation
\[
u=\mathcal{F}_{c}(u):=\mathcal{K}_{\Omega}(cu+a(x)u^{q})+\mathcal{K}%
_{\partial\Omega}(\alpha u)\quad\mbox{ in }C(\overline{\Omega}).
\]

Next we explain how to apply \cite[Proposition 7.8]{Am76} to our setting. Let
$v,w\in\mathcal{P}^{\circ}$ be such that $w-v\in\mathcal{P}^{\circ}$, and
assume that $v\leq\mathcal{F}_{c}(v)$ and $w\geq\mathcal{F}_{c}(w)$. In view
of these inequalities and the above formulation, $v$ and $w$ are still called
a \textit{subsolution} and a \textit{supersolution} of $(P_{\alpha})$,
respectively. In particular, let us choose $c>0$ large such that
\[
m_{c}\left(  \xi\right)  :=c+qa(x)\xi^{q-1}>0\ \mbox{ on }\ \overline{\Omega
}\quad\mbox{ for }\ \xi\in\lbrack\min_{\overline{\Omega}}v,\max_{\overline
{\Omega}}w]\ \mbox{and }\ x\in\overline{\Omega}.
\]
Set the order interval
\[
\lbrack v,w]:=\left\{  u\in C(\overline{\Omega}):v\leq u\leq
w\mbox{ on }\overline{\Omega}\right\}  ,
\]
and then, under the condition above $\mathcal{F}_{c}$ is \textit{strongly
increasing} in $[v,w]$, i.e., if $u_{1},u_{2}\in\lbrack v,w]$ and $u_{2}%
-u_{1}\in\mathcal{P}\setminus\{0\}$, then $\mathcal{F}_{c}(u_{2}%
)-\mathcal{F}_{c}(u_{1})\in\mathcal{P}^{\circ}$. We apply \cite[Proposition
7.8]{Am76} to the mapping $\mathcal{F}_{c}:[v,w]\rightarrow C(\overline
{\Omega})$ to deduce that the equation above has at least one solution $u$
such that $u\in\lbrack v,w]$. So $u\in C^{2+\theta}(\overline{\Omega}%
)\cap\mathcal{P}^{\circ}$ is a solution of $(P_{\alpha})$.

For such solution $u$, we note that $m_{c}(u)\in\mathcal{P}^{\circ}$ in view
of the inequality above, so the eigenvalue problem
\[
\mathcal{F}_{c}^{\prime}(u)\psi=\mathcal{K}_{\Omega}(m_{c}\left(  u\right)
\psi)+\mathcal{K}_{\partial\Omega}(\alpha\psi)=\sigma\psi
\]
has a largest eigenvalue $\sigma_{1}>0$, which is simple and is the only
principal eigenvalue (with a corresponding eigenfunction $\psi_{1}\in
C^{2+\theta}(\overline{\Omega})\cap\mathcal{P}^{\circ}$), cf. \cite[Section 3,
Chapter 1]{Am76}. Consequently, we have
\[%
\begin{cases}
\sigma_{1}(-\Delta+c)\psi_{1}=m_{c}\left(  u\right)  \psi_{1} &
\mbox{ in }\Omega,\\
\sigma_{1}\partial_{\nu}\psi_{1}=\alpha\psi_{1} & \mbox{ on }\partial\Omega.
\end{cases}
\]

To apply the stability results in \cite[Proposition 7.8]{Am76}, we finally
compare the relation between $\sigma_{1}$ and $\gamma_{1}$, where we recall
$\gamma_{1}$ is the smallest eigenvalue of \eqref{ep}. More precisely, we
shall verify that
\begin{equation}%
\begin{cases}
& \sigma_{1}>1\ \Longleftrightarrow\ \gamma_{1}<0,\\
& \sigma_{1}<1\ \Longleftrightarrow\ \gamma_{1}>0.
\end{cases}
\tag{A}\label{a}%
\end{equation}
Recalling \eqref{ep} and using Green's formula, we obtain
\begin{align*}
\int_{\Omega}(-\Delta+c)\phi_{1}\cdot\psi_{1}-\phi_{1}\cdot(-\Delta
+c)\psi_{1}  &  =\int_{\partial\Omega}-\partial_{\nu}\phi_{1}\cdot\psi
_{1}+\phi_{1}\cdot\partial_{\nu}\psi_{1}\\
&  =\alpha\left(  \frac{1}{\sigma_{1}}-1\right)  \int_{\partial\Omega}\phi
_{1}\psi_{1}.
\end{align*}
A direct computation shows
\[
\int_{\Omega}(-\Delta+c)\phi_{1}\cdot\psi_{1}-\phi_{1}\cdot(-\Delta+c)\psi
_{1}=\gamma_{1}\int_{\Omega}\phi_{1}\psi_{1}+\left(  1-\frac{1}{\sigma
}\right)  \int_{\Omega}m_{c}\left(  u\right)  \phi_{1}\psi_{1}.
\]
Combining them provides
\[
\gamma_{1}\int_{\Omega}\phi_{1}\psi_{1}=\left(  \frac{1}{\sigma_{1}}-1\right)
\left(  \int_{\Omega}m_{c}\left(  u\right)  \phi_{1}\psi_{1}+\alpha
\int_{\partial\Omega}\phi_{1}\psi_{1}\right)  ,
\]
showing \eqref{a}.

{\small
}

{\small
} {\small
}

{\small
}

{\small
}

{\small
}

{\small
}

{\small
}

{\small
}

{\small
}

{\small
}

{\small
}

{\small
}

{\small
}

{\small
}

{\small
}

{\small
}

{\small
}


\begin{thebibliography}{99}                                                                                               %
{\small
}

\bibitem {AB99}{\small G.\ A.\ Afrouzi, K.\ J.\ Brown, \textit{On principal
eigenvalues for boundary value problems with indefinite weight and Robin
boundary conditions}, Proc.\ Amer.\ Math.\ Soc.\ \textbf{127} (1999),
125--130. }

\bibitem {alama}{\small S. Alama, \textit{Semilinear elliptic equations with
sublinear indefinite nonlinearities}, Adv. Differential Equations \textbf{4}
(1999), 813--842.}

\bibitem {Am76}{\small H.\ Amann, \textit{Fixed point equations and nonlinear
eigenvalue problems in ordered Banach spaces}, SIAM Rev.\ \textbf{18} (1976),
620--709. }

\bibitem {Am76N}{\small H.\ Amann, Nonlinear elliptic equations with nonlinear
boundary conditions, New developments in differential equations (Proc. 2nd
Scheveningen Conf., Scheveningen, 1975), pp. 43--63. North-Holland Math.
Studies, Vol. 21, North-Holland, Amsterdam, 1976. }

{\small
}

\bibitem {BPT1}{\small C. Bandle, M. Pozio, A. Tesei, \textit{The asymptotic
behavior of the solutions of degenerate parabolic equations}, Trans. Amer.
Math. Soc. \textbf{303} (1987), 487--501. }

\bibitem {BPT2}{\small C.\ Bandle, M.\ A.\ Pozio, A.\ Tesei, \textit{Existence
and uniqueness of solutions of nonlinear Neumann problems},
Math.\ Z.\ \textbf{199} (1988), 257--278. }

{\small
}

{\small
}

{\small
}

{\small
}

{\small
}

\bibitem {CT14}{\small J. Chabrowski, C.\ Tintarev, \textit{An elliptic
problem with an indefinite nonlinearity and a parameter in the boundary
condition}, NoDEA Nonlinear Differ. Equ. Appl.\ \textbf{21} (2014), 519--540.
}

{\small
}

{\small
}

\bibitem {CR73}{\small M.\ G.\ Crandall, P.\ H.\ Rabinowitz,
\textit{Bifurcation, perturbation of simple eigenvalues and linearized
stability}, Arch.\ Rational Mech.\ Anal.\ \textbf{52} (1973), 161--180. }

\bibitem {HT}{\small S. El Habib, N. Tsouli, \textit{On the spectrum of the
p-Laplacian operator for Neumann eigenvalue problems with weights}, Electron.
J. Differential Equations, Conf. 14 (2005), 181-190.}

{\small
}

{\small
}

{\small
}

{\small
}

{\small
}

{\small
}

\bibitem {KRQU16}{\small U.\ Kaufmann, H.\ Ramos Quoirin, K.\ Umezu,
\textit{Positivity results for indefinite sublinear elliptic problems via a
continuity argument}, J.\ Differential Equations \textbf{263} (2017),
4481--4502. }

\bibitem {KRQUnodea}{\small U.\ Kaufmann, H.\ Ramos Quoirin, K.\ Umezu,
\textit{Positive solutions of an elliptic Neumann problem with a sublinear
indefinite nonlinearity}, NoDEA Nonlinear Differ. Equ. Appl. \textbf{25
}(2018), Art. 12, 34 pp.}

{\small
}

{\small
}

{\small
}

\bibitem {KRQU2019}{\small U.\ Kaufmann, H.\ Ramos Quoirin, K.\ Umezu,
\textit{Nonnegative solutions of an indefinite sublinear Robin problem I:
positivity, exact multiplicity, and existence of a subcontinuum}, Ann. Mat.
Pura Appl. (2020), to appear. }

\bibitem {LGbook13}{\small J.\ L\'{o}pez-G\'{o}mez, Linear second order
elliptic operators, World Scientific, Hackensack, NJ, 2013.}

\bibitem {LY}{\small Y. Lou, E. Yanagida, \textit{Minimization of the
principal eigenvalue for an elliptic boundary value problem with indefinite
weight, and applications to population dynamics}, Japan J. Indust. Appl. Math.
\textbf{23} (2006), 275--292.}

\bibitem {RR}{\small M.A. Rivas, S.B. Robinson, \textit{Eigencurves for linear
elliptic equations}, ESAIM: Control, Optimisation and Calculus of Variations
\textbf{25} (2019), No. 45, 25 pp.}

\bibitem {T}{\small O. Torn\'{e}, \textit{Steklov problem with an indefinite
weight for the p-Laplacian}, Electron. J. Differential Equations \textbf{2005}
(2005), No. 87, 8 pp.}

\bibitem {U12}{\small K.\ Umezu, \textit{Bifurcation approach to a logistic
elliptic equation with a homogeneous incoming flux boundary condition},
J.\ Differential Equations \textbf{252} (2012), 1146--1168. }

\bibitem {Verma}{\small S.\ Verma, \textit{Bounds for the Steklov
eigenvalues}, Arch. Math. \textbf{111} (2018), 657--668. }

\bibitem {V}{\small L. V\'{e}ron, \textit{Premi\`{e}re valeur propre non nulle
du p-Laplacien et \'{e}quations quasilin\'{e}aires elliptiques sur une
vari\'{e}t\'{e} riemannienne compacte}, C.R. Acad. Sci. Paris \textbf{314
}(1992), 271--276.}

\bibitem {Zei86}{\small E.\ Zeidler, Nonlinear functional analysis and its
applications I Fixed-point theorems, Springer-Verlag, New York, 1986. }
\end{thebibliography}
\end{document}